\theoremstyle{plain}
\newtheorem{theorem}{Theorem}[section]
\newtheorem{corollary}[theorem]{Corollary}
\newtheorem{lemma}[theorem]{Lemma}
\newtheorem{proposition}[theorem]{Proposition}
\theoremstyle{definition}
\newtheorem{remark}[theorem]{Remark}
\newtheorem{definition}[theorem]{Definition}
\title{Picard group of isotropic realizations of twisted Poisson manifolds}
\author{Chi-Kwong Fok}
\date{September 1, 2015}
\begin{document}
\begin{abstract}
	Let $B$ be a twisted Poisson manifold with a fixed tropical affine structure given by a period bundle $P$. In this paper, we study the classification of almost symplectically complete isotropic realizations (ASCIRs) over $B$ in the spirit of \cite{DD}. We construct a product among ASCIRs in analogy with tensor product of line bundles, thereby introducing the notion of the Picard group of $B$. We give descriptions of the Picard group in terms of exact sequences involving certain sheaf cohomology groups, and find that the `N\'eron-Severi group' is isomorphic to $H^2(B, \underline{P})$. An example of an ASCIR over a certain open subset of a compact Lie group is discussed.
\end{abstract}
\maketitle
\tableofcontents
\section{Introduction}
A completely integrable Hamiltonian system is a symplectic $2n$-dimensional manifold with $n$ independent smooth functions being pairwise in involution, i.e. the Poisson brackets (induced by the symplectic structure) of any pair of them vanish. On the one hand, the Liouville-Arnold theorem, a well-known basic result on the local structure of completely integrable Hamiltonian systems, asserts the existence of local action-angle coordinates. Duistermaat, on the other hand, first explored the global structure in \cite{Dui} in the early 80s, where he worked in the slightly more general setting of Lagrangian fiber bundles. He introduced two topological invariants, namely monodromy and Chern classes. The triviality of the former quantity gives a necessary (but not sufficient) condition for the existence of global action coordinates, while the vanishing of the latter, together with a certain condition on the symplectic form, amount to the existence of global angle coordinates. Duistermaat's foundational work has since elicited generalizations in mainly two contexts, superintegrable systems (mechanical systems where the number of independent smooth functions in involution is less than half of the dimensions of the systems) and almost symplectic systems, to be explained below.

Dazord and Delzant in \cite{DD} studied, in the spirit of \cite{Dui}, the global structure of symplectically complete isotropic realizations (SCIR) which correspond to superintegrable systems. In particular, they defined the Lagrangian class, an invariant finer than the Chern class and the vanishing of which is equivalent to the existence of global angle coordinates. Moreover, they introduced what is now called Dazord-Delzant homomorphism of certain sheaf cohomology groups and used it to give a sufficient and necessary topological condition for a torus bundle over a Poisson manifold to have a compatible symplectic form so as to be an SCIR. More details on their results can also be found in \cite{Va}.

In recent years there has been a growing interest in the study of nonholonomic systems, and as a preliminary step of investigation in this context almost symplectic systems are considered. For instance, in \cite{FS} Fass\`o and Sansonetto studied almost symplectic integrable Hamiltonian systems (IASHS, see Definition \ref{IASHS}), which are basically SCIRs except that they are almost symplectic. They obtained a generalization of Liouville-Arnold Theorem under the condition of the existence of strongly Hamiltonian vector fields (See Theorem \ref{IASHSmain}). The global theory in this direction in the spirit of Duistermaat/Dazord-Delzant was first outlined by Sjamaar in \cite{Sj}, where the special case of almost Lagrangian fiber bundles were considered. Sansonetto and Sepe, in the recent paper \cite{SS}, generalized the Dazord-Delzant theory in both contexts of superintegrable systems and almost symplectic systems. They studied what we call in this paper almost symplectically complete isotropic realizations (ASCIR, see Definition \ref{ASCIR}), which are equivalent to IASHSs. They describe the tropical affine structure (called transversally integral affine structure in \cite{SS}. See Definition \ref{tropical}) coming from ASCIRs and Chern class, and use the sheaf cohomology groups (which are exactly the same as those in \cite{DD}) to describe Dazord-Delzant homomorphism in the case of ASCIRs. They brought twisted Poisson structures into play, seeing as the base space of an ASCIR inherits a twisted Poisson structure (cf. Theorem \ref{IASHSmain}). Motivated by string theory, twisted Poisson manifolds were first introduced in \cite{SW}, and provide the framework for the study of such nonholonomic systems as the Veselova systems and the Chaplygin sphere (see \cite{BG-N}). 

In this paper, following the ideas outlined in \cite{Sj}, we continue the study of ASCIRs. In particular, we show, by a symplectic reduction argument, that there is a `tensor product' of ASCIRs over a fixed twisted Poisson manifold $B$ with a given tropical affine structure, analogous to the tensor product of line bundles. The resulting group of isomorphism classes of ASCIRs, which we call the Picard group of $B$, admits descriptions in terms of exact sequences of certain sheaf cohomology groups of $B$ as in Theorem \ref{classification}. We also consider the twisted Poisson structure on an open subset of a compact Lie group described in \cite{SW}, and exhibit an example of an ASCIR over it.


The organization of this paper is as follows. Section \ref{intalsymham} reviews the analogue of Liouville-Arnold Theorem for IASHSs as in \cite{FS} and the notion of twisted Poisson manifolds. ASCIRs are defined and shown to be equivalent to IASHSs. In Sections \ref{tortropaff} and \ref{chern} we define certain topological structures ASCIRs possess, namely torsors and tropical affine structures. We also review Chern classes and introduce certain sheaf cohomology groups to set the scene for classifying ASCIRs over a fixed twisted Poisson manifold with a given tropical affine structure. In Section \ref{picard}, we explain in detail the construction of the tensor product of ASCIRs and define the Picard group. We restate the main result of \cite{SS}, which gives, by means of the Dazord-Delzant homomorphism, the necessary and sufficient condition for a torsor to be an ASCIR with a certain twisting 3-form. Our main results on the classification of ASCIRs and the description of the Picard group are also given and proved. In Section \ref{twistedlieex} we discuss the example of an ASCIR over an open subset of a compact Lie group with the twisted Poisson structure as in \cite{SW}. Section \ref{remarks} points out how the present work fits into the more general framework of integration of twisted Poisson and Dirac structures, and Picard groups in Poisson geometry.

\textbf{Acknowledgments}. The author is indebted to his advisor, Reyer Sjamaar, for his patient guidance and generously sharing his ideas which constitute a major part of this paper. He would also like to thank the referees for their detailed and critical comments and suggestions for improvements.

\section{Integrable almost symplectic Hamiltonian systems}\label{intalsymham}
In \cite{FS}, the notion of strong Hamiltonianicity is introduced and exploited to generalize the Liouville-Arnold theorem on local action-angle coordinates in the context of integrable almost symplectic Hamiltonian systems (cf. Definition \ref{IASHS}), which are mechanical systems that, in terms of generality, lie between Hamiltonian ones and nonholonomic ones (cf. \cite{SS}). In what follows we shall first recall the relevant notions in this general setting before stating their results.

\begin{definition}\label{IASHS}
		An \emph{almost symplectic manifold} is a pair $(M, \omega)$, where $M$ is a smooth manifold and $\omega\in\Omega^2(M)$ is a non-degenerate 2-form. Any $\omega$ as above is referred to as an \emph{almost symplectic form}. 
\end{definition}
\begin{definition}(cf. \cite{FS})
	A function $f\in C^\infty(M)$ is strongly Hamiltonian if the associated vector field $X_f:=(\omega^\sharp)^{-1}(df)$ satisfies $\mathcal{L}_{X_f}\omega=0$.
\end{definition}
\begin{definition}(cf. \cite{FS} and \cite[Definition 6]{SS})
		We call $\pi: M^{2d}\to B^k$ an \emph{integrable almost symplectic Hamiltonian system} if the following conditions hold
		\begin{enumerate}
			\item $\pi$ is a surjective submersion with compact and connected fibers.
			\item $M$ has an almost symplectic form $\omega$.
			\item Fibers of $\pi$ are isotropic.
			\item For any point $b\in B$, there exists a $\pi$-saturated neighborhood $U$ such that there exist linearly independent, strongly Hamiltonian vector fields $Y_1, \cdots, Y_n$ (here $n:=2d-k$) in $U$ which are tangent to the fibers of $\pi$.
		\end{enumerate}
	
\end{definition}
\begin{definition}[cf. \cite{SW}]\label{twistedPoisson}
	A \emph{twisted Poisson manifold} is a pair $(B, \Pi)$ where $\Pi$ is a bivector field such that $[\Pi, \Pi]\in (\bigwedge\nolimits^3\Pi^\sharp)(Z^3(B))$, where $\Pi^\sharp: T^*B\to TB$ is defined by $\Pi^\sharp(\alpha)(\beta)=\Pi(\alpha, \beta)$. An $\eta$-\emph{twisted Poisson manifold} is a triple $(B, \Pi, \eta)$ if $[\Pi, \Pi]=\bigwedge\nolimits^3\Pi^\sharp(\eta)$. If $\Pi$ is of constant rank then we say the twisted Poisson structure is \emph{regular}. We also define $X_f:=-\Pi^\sharp(df)$. 
\end{definition}
\begin{remark}\label{twistedpoissonrmk}
	The term $\bigwedge\nolimits^3\Pi^\sharp(\eta)$ measures how $\Pi$ fails to satisfy the Jacobi identity. The condition $[\Pi, \Pi]=\bigwedge^3\Pi^\sharp(\eta)$ amounts to requiring that for any smooth functions $f$, $g$ and $h$, 
	\begin{eqnarray}\label{twistedpoissoneq}\{\{f, g\}, h\}+\{\{g, h\}, f\}+\{\{h, f\}, g\}+\eta(X_f, X_g, X_h)=0.\end{eqnarray}
	Similar to the Poisson case, a twisted Poisson structure induces a (in general singular) foliation in which each leaf is equipped with an almost symplectic form, whose exterior differential is the restriction of the twisting form $\eta$ to the leaf (cf. \cite{SW}). In particular, if two twisting forms $\eta_1$ and $\eta_2$ correspond to the same twisted Poisson bivector field $\Pi$, then $\eta_1-\eta_2$ vanishes on restriction to each almost symplectic leaf. Conversely, if $\eta_1$ is a twisting 3-form corresponding to $\Pi$, and $\eta_2$ another 3-form such that $\eta_1-\eta_2$ vanishes on restriction to each leaf, then $\eta_2$ is also a twisting 3-form corresponding to $\Pi$. That is because the vector field $X_f$ is tangent to the leaves, and as a result $(\eta_1-\eta_2)(X_f, X_g, X_h)=0$ and $\eta_2$ satisfies Equation (\ref{twistedpoissoneq}) as well. 
\end{remark}
Throughout this paper, $(B, \Pi)$ is always a regular twisted Poisson manifold with almost symplectic foliation denoted by $\mathcal{F}$, unless other specified.
\begin{definition}\label{tropical}
	Let $(B, \mathcal{F})$ be a manifold endowed with a regular foliation. A \emph{tropical affine structure} transversal to the leaves of $B$ is a system of foliated atlas $\mathcal{A}=\{(U_\alpha, p_\alpha)\}$ for $\mathcal{F}$ consisting of submersions $p_\alpha$ which locally define the foliation of the leaf space such that on $U_{\alpha\beta}$, the change of coordinates is given by 
	\[p_\beta=A_{\alpha\beta}p_\alpha+z_{\alpha\beta}\]
	for $A_{\alpha\beta}\in GL(n, \mathbb{Z})$ and a constant vector $z_{\alpha\beta}\in\mathbb{R}^n$. In other words, the transition functions take values in the \emph{tropical affine group} $\textbf{Trop}(n):=GL(n, \mathbb{Z})\ltimes \mathbb{R}^n$.
\end{definition}
\begin{theorem} (cf. \cite[Section 3]{SS} and \cite[Proposition 4, 5 and 6]{FS}) \label{IASHSmain}
 $\pi: M^{2d}\to B^k$ is an integrable almost symlectic Hamiltonian system, then we have
	\begin{enumerate}
		\item $\pi$ is a $T^n$-fiber bundle, where $T^n$ is the $n$-dimensional torus.
		\item Local action coordinates $a_1, \cdots, a_n$, local angle coordinates $\alpha_1, \cdots, \alpha_n$ and $b_1, \cdots, b_{2d-2n}$ exist. $\omega$ in these coordinates has the following representation
		\[\sum_{i=1}^nda_i\wedge d\alpha_i+\frac{1}{2}\sum_{i<j}A_{ij}da_i\wedge da_j+\sum_{i, j}B_{ij}da_i\wedge db_j+\frac{1}{2}\sum_{i<j}C_{ij}db_i\wedge db_j\]
		where $A_{ij}$, $B_{ij}$ and $C_{ij}$ are independent of the angle coordinates, and $C_{ij}$ is nonsingular. In particular, $d\omega=\pi^*\eta$ for some closed 3-form $\eta$ of $B^k$.
		\item $B$ is an $\eta$-twisted regular Poisson manifold of rank $2d-2n$, where the twisted Poisson structure $\Pi$ is coinduced by $\omega$, i.e. the twisted Poisson bracket $\{\cdot, \cdot\}_B$ satisfies
		\[\pi^*\{f, g\}_B=\omega(X_{\pi^*f}, X_{\pi^*g})\]
		\item The almost symplectic leaves are locally the level sets of the action coordinates $a_1, \cdots, a_n$ on $B$, which are locally the Casimirs of the twisted Poisson structure of $B$.
		\item The local action coordinates give rise to a tropical affine structure transversal to the almost symplectic leaves of $B$. 
	\end{enumerate}
\end{theorem}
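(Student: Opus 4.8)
The plan is to follow the classical Liouville--Arnold strategy, adapted to the almost symplectic setting in which $d\omega$ need no longer vanish. Write $Y_i = X_{f_i} = (\omega^\sharp)^{-1}(df_i)$ for the given strongly Hamiltonian vector fields on a $\pi$-saturated neighborhood $U$, so that $\iota_{Y_i}\omega = df_i$ and, by strong Hamiltonianicity, $\mathcal{L}_{Y_i}\omega = 0$. The first goal (item 1) is to produce a torus action. I would begin by checking that the $Y_i$ pairwise commute: from the identity $\iota_{[Y_i,Y_j]} = \mathcal{L}_{Y_i}\iota_{Y_j} - \iota_{Y_j}\mathcal{L}_{Y_i}$ one gets $\iota_{[Y_i,Y_j]}\omega = d(\omega(Y_j,Y_i))$, and since the fibers are isotropic and the $Y_i$ are tangent to them, $\omega(Y_j,Y_i)=0$; non-degeneracy of $\omega$ then forces $[Y_i,Y_j]=0$. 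The commuting, fiber-tangent, linearly independent fields $Y_1,\dots,Y_n$ integrate to a locally free $\mathbb{R}^n$-action whose orbits are the compact connected $n$-dimensional fibers; the usual argument identifies each fiber with $\mathbb{R}^n/\Lambda_b$ for a period lattice $\Lambda_b$ varying smoothly in $b$, yielding the $T^n$-bundle structure.

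For items 2 and 4 I would build action coordinates $a_i$ from a smooth local frame of the period lattice, angle coordinates $\alpha_i$ along the fibers, and transverse coordinates $b_j$ pulled back from $B$. Isotropy kills the $d\alpha_i\wedge d\alpha_j$ terms, the normalization defining the $a_i$ produces the $\sum_i da_i\wedge d\alpha_i$ block, and the remaining coefficients $A_{ij}, B_{ij}, C_{ij}$ depend on $(a,b)$ alone because $\mathcal{L}_{Y_i}\omega = 0$ forbids any dependence on the $\alpha_i$; non-degeneracy of $\omega$ forces $C_{ij}$ to be nonsingular. That the $a_i$ are local Casimirs whose level sets are the leaves follows because the Hamiltonian vector fields of the $a_i$ are exactly the vertical torus generators, which project to zero under $\pi$.

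The crucial structural point (items 2 and 3) is that $d\omega$ is basic. Since $\iota_{Y_i}\omega = df_i$ is closed, $\iota_{Y_i}d\omega = \mathcal{L}_{Y_i}\omega - d\iota_{Y_i}\omega = 0$, and likewise $\mathcal{L}_{Y_i}d\omega = d\mathcal{L}_{Y_i}\omega = 0$; being horizontal and invariant for the torus fibration, $d\omega$ descends to a unique 3-form $\eta$ on $B$ with $d\omega = \pi^*\eta$, which is closed because $\pi^*d\eta = d\,d\omega = 0$ and $\pi$ is a submersion. I would then define the coinduced bracket by $\pi^*\{f,g\}_B := \omega(X_{\pi^*f}, X_{\pi^*g})$, first verifying that the right-hand side is basic so that $\{\cdot,\cdot\}_B$ is well defined and of constant rank $2d-2n$ (the transverse complement of the $n$ Casimir directions $a_i$ inside $B^{2d-n}$). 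The main work is checking the twisted Poisson identity: I expect to compute the Jacobiator of $\{\cdot,\cdot\}_B$ and match it against $\eta(X_f, X_g, X_h)$ as in Equation (\ref{twistedpoissoneq}), the failure of Jacobi being governed precisely by the basic form $d\omega = \pi^*\eta$. Translating the upstairs identity $d\omega = \pi^*\eta$ into this downstairs Jacobiator relation is where I anticipate the bulk of the bookkeeping and the main obstacle.

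Finally, for item 5, the action coordinates are determined only up to a choice of basis for the period lattice and an additive constant along each leaf; hence on overlaps $p_\beta = A_{\alpha\beta}p_\alpha + z_{\alpha\beta}$ with $A_{\alpha\beta}\in GL(n,\mathbb{Z})$ the change-of-lattice-basis matrix and $z_{\alpha\beta}\in\mathbb{R}^n$ constant. The transition functions therefore land in $\textbf{Trop}(n)$ and define the asserted tropical affine structure transversal to $\mathcal{F}$.
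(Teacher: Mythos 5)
First, a caveat: the paper offers no proof of Theorem \ref{IASHSmain} at all --- it is imported from \cite{FS} and \cite{SS} (hence the ``cf.'' in the statement) --- so there is no in-house argument to compare yours against. Taken on its own terms, your outline is the standard Liouville--Arnold argument adapted to $d\omega\neq 0$, and the computations you actually perform are correct: $\iota_{[Y_i,Y_j]}\omega=d(\omega(Y_j,Y_i))=0$ gives commutativity, and $\iota_{Y_i}d\omega=\mathcal{L}_{Y_i}\omega-d\iota_{Y_i}\omega=0$ together with $\mathcal{L}_{Y_i}d\omega=0$ gives that $d\omega$ is basic. Two details of item (2) are asserted with the wrong justification, though both are repairable. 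The vanishing of the $d\alpha_i\wedge db_j$ cross-terms (needed for the displayed normal form but never addressed) does not come from isotropy; it comes from the normalization $\iota_{\partial_{\alpha_i}}\omega=\pi^*da_i$, which kills the pairing of $\partial_{\alpha_i}$ against every coordinate vector annihilated by $da_i$. Likewise, angle-independence of $A,B,C$ requires $\mathcal{L}_{\partial_{\alpha_i}}\omega=0$ for the \emph{normalized} generators, not for the given $Y_i$; since $\partial_{\alpha_i}=\sum_j c_{ij}Y_j$ with base-dependent coefficients, this is not immediate from $\mathcal{L}_{Y_j}\omega=0$, but it does follow from $\mathcal{L}_{\partial_{\alpha_i}}\omega=\sum_j dc_{ij}\wedge\iota_{Y_j}\omega+\sum_j c_{ij}\mathcal{L}_{Y_j}\omega=d\bigl(\sum_j c_{ij}\,df_j\bigr)=d(da_i)=0$.

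The one genuine gap is the step you defer, the twisted Jacobi identity in item (3), and it is much less bookkeeping than you anticipate. With $\iota_{X_u}\omega=du$ and $\{u,v\}=\omega(X_u,X_v)$ on $M$, the Cartan calculus gives
\[
\iota_{[X_u,X_v]}\omega=\mathcal{L}_{X_u}\iota_{X_v}\omega-\iota_{X_v}\mathcal{L}_{X_u}\omega=d\{v,u\}-\iota_{X_v}\iota_{X_u}d\omega .
\]
Contracting with $X_w$, comparing against $\omega([X_u,X_v],X_w)=-[X_u,X_v](w)$, and summing cyclically yields
\[
\{\{u,v\},w\}+\{\{v,w\},u\}+\{\{w,u\},v\}+d\omega(X_u,X_v,X_w)=0
\]
for all $u,v,w\in C^\infty(M)$. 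Applying this with $u=\pi^*f$, $v=\pi^*g$, $w=\pi^*h$, using $d\omega=\pi^*\eta$ and $\pi_*X_{\pi^*f}=X_f$, and checking that $\omega(X_{\pi^*f},X_{\pi^*g})$ is fiberwise constant (which you rightly flag; it follows from $\iota_{[Y_i,X_{\pi^*f}]}\omega=d(Y_i(\pi^*f))=0$, hence $[Y_i,X_{\pi^*f}]=0$), you obtain exactly Equation (\ref{twistedpoissoneq}) on $B$ up to the paper's sign convention $X_f=-\Pi^\sharp(df)$. With that identity supplied, the remaining items (4) and (5) go through as you describe.
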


In Theorem \ref{IASHSmain}, we start with the almost symplectic and strongly Hamiltonian structures of the integrable system $M$, and deduce the various compatible structures of $B$, i.e. the regular twisted Poisson structure and the tropical affine structure. This is the geometric mechanical approach taken in \cite{FS}. In the rest of this paper, we shall take the theoretical approach as in \cite{DD} in analogy to the integration of \emph{Lie algebroids} (for more explanation see Section \ref{remarks}) by starting with the regular twisted Poisson and tropical affine structures on $B$ and investigate what integrable almost symplectic Hamiltonian systems $B$ can support (or what torus bundles over $B$ support compatible integrable almost symplectic Hamiltonian structures). This question was first answered in \cite{DD} in the genuine symplectic case and later by \cite{SS} in the almost symplectic case by generalizing the arguments in \cite{DD}. 

\begin{definition}\label{ASCIR}
	An \emph{almost symplectically complete isotropic realization} (ASCIR) $(M, \omega)$ over a regular twisted Poisson manifold $(B, \Pi)$ is a surjective submersion $\pi: M\to B$ whose fibers are compact and connected and such that $\pi$ is an almost Poisson map (here the almost Poisson structure on $M$ is induced by $\omega$), and $d\omega=\pi^*\eta$ for some closed 3-form $\eta$, which we call the \emph{twisting form} of the ASCIR. 
\end{definition}
We note that the almost Poisson bi-vector induced by the almost symplectic structure of $M$ satisfies $[\Pi_M, \Pi_M]=\bigwedge^3\Pi^\sharp_M(d\omega)$. $\pi$ being an almost Poisson map, the Poisson bi-vector $\Pi$ of $B$ satisfies $[\Pi, \Pi]=\bigwedge^3\Pi^\sharp(\eta)$. So $B$ is $\eta$-twisted if $d\omega=\pi^*\eta$. The following Proposition justifies that there is nothing lost in taking the aforementioned `theoretical' approach.
\begin{proposition}\label{IASHSASCIR}
	$(M, \omega)$ is an ASCIR over $(B, \Pi, \eta)$ if and only if $\pi: M\to B$ is an integrable almost symplectic Hamiltonian system.
\end{proposition}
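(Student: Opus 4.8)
The plan is to prove the two implications separately, noting first that both structures share the same underlying data: a surjective submersion $\pi\colon M^{2d}\to B^k$ with compact connected fibers together with a non-degenerate $\omega\in\Omega^2(M)$. Thus conditions (1) and (2) in the definition of an integrable almost symplectic Hamiltonian system hold for any ASCIR and conversely, and the real content is to match the conditions that $\pi$ is an almost Poisson map and that $d\omega=\pi^*\eta$ against the conditions that the fibers are isotropic and that there is a local frame of linearly independent strongly Hamiltonian vector fields tangent to them. I expect the implication from an integrable almost symplectic Hamiltonian system to an ASCIR to be essentially a repackaging of Theorem \ref{IASHSmain}, while the reverse implication requires genuinely constructing the strongly Hamiltonian frame out of the twisted Poisson data on $B$.

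For the direction ``IASHS $\Rightarrow$ ASCIR'' I would simply invoke Theorem \ref{IASHSmain}. Part (1) shows the fibers are tori, hence compact and connected; part (2) provides the local normal form from which $d\omega=\pi^*\eta$ with $\eta$ closed; and part (3) states precisely that $\pi^*\{f,g\}_B=\omega(X_{\pi^*f},X_{\pi^*g})$, which is the assertion that $\pi$ is an almost Poisson map. Isotropy of the fibers is hypothesis (3) of an IASHS. Collecting these verifies every clause of Definition \ref{ASCIR}, so $(M,\omega)$ is an ASCIR over the coinduced $(B,\Pi,\eta)$.

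For the direction ``ASCIR $\Rightarrow$ IASHS'' conditions (1) and (2) are immediate and condition (3), isotropy of the fibers, is built into the notion of an isotropic realization; the only substantive point is condition (4). Fix $b\in B$; since $\Pi$ is regular of some rank $2r$, on a neighborhood $V$ of $b$ I can choose functionally independent Casimirs $c_1,\dots,c_m$ whose differentials span the annihilator of the characteristic distribution, and I set $Y_i:=X_{\pi^*c_i}$ on the $\pi$-saturated open set $\pi^{-1}(V)$. Three things must be checked. First, each $Y_i$ is tangent to the fibers: because $\pi$ is an almost Poisson map the Hamiltonian vector fields are $\pi$-related (from $\pi_*\Pi_M=\Pi$ one gets $d\pi(X_{\pi^*c_i})=X_{c_i}\circ\pi$), and $X_{c_i}=-\Pi^\sharp(dc_i)=0$ since $c_i$ is a Casimir. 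Second, each $Y_i$ is strongly Hamiltonian: writing $\mathcal{L}_{Y_i}\omega=d(\iota_{Y_i}\omega)+\iota_{Y_i}\,d\omega=\iota_{Y_i}\pi^*\eta$, and noting that $\iota_{Y_i}\pi^*\eta$ vanishes because $Y_i$ is vertical while $\pi^*\eta$ is basic, gives $\mathcal{L}_{Y_i}\omega=0$. Third, the $Y_i$ are linearly independent, since the $dc_i$ are independent, $\pi$ is a submersion, and $\omega^\sharp$ is an isomorphism. Finally, isotropy forces $\ker d\pi\subseteq(\ker d\pi)^\omega=\operatorname{span}\{X_{\pi^*f}\colon f\in C^\infty(B)\}$, whose vertical elements are exactly the Casimir-Hamiltonian fields; a dimension count then gives $m=2d-k=n$ and shows that $Y_1,\dots,Y_n$ frame the vertical bundle, which is condition (4).

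The main obstacle is this last step, namely identifying the number of independent Casimir-Hamiltonian fields with the fiber dimension and confirming that they span $\ker d\pi$. This is exactly where isotropy is indispensable: it yields the inclusion $\ker d\pi\subseteq(\ker d\pi)^\omega$, and since $(\ker d\pi)^\omega$ is spanned by the Hamiltonian vector fields of basic functions, every vertical direction must come from a Casimir, which forces the equality $k-2r=2d-k$. A minor bookkeeping point is to keep the conventions $X_f=(\omega^\sharp)^{-1}(df)$ on $M$ and $X_f=-\Pi^\sharp(df)$ on $B$ consistent when asserting that Hamiltonian vector fields are $\pi$-related; the strong Hamiltonicity computation itself is sign-independent, since $d(\iota_{Y_i}\omega)=\pm d(d(\pi^*c_i))=0$ in either convention.
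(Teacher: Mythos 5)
Your argument for the direction ``IASHS $\Rightarrow$ ASCIR'' and most of the reverse direction (verticality of $Y_i=X_{\pi^*c_i}$ via $\pi_*X_{\pi^*f}=X_f$, strong Hamiltonicity via Cartan's formula and the vanishing of $\iota_{Y_i}\pi^*\eta$, linear independence) coincide with the paper's proof. But there is a genuine gap in how you dispose of condition (3). You assert that isotropy of the fibers ``is built into the notion of an isotropic realization''; it is not. Definition \ref{ASCIR} only requires that $\pi$ be a surjective submersion with compact connected fibers, that $\pi$ be an almost Poisson map, and that $d\omega=\pi^*\eta$ --- the word ``isotropic'' in the name is a property to be \emph{derived}, and the paper does derive it, as the last step, from $\omega(X_{\pi^*a_i},X_{\pi^*a_j})=\{\pi^*a_i,\pi^*a_j\}_M=\pi^*\{a_i,a_j\}_B=0$ once the $X_{\pi^*a_i}$ are known to span the vertical bundle.

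The gap propagates, because your verification of condition (4) leans on the unproved isotropy exactly where it matters: you use $\ker d\pi\subseteq(\ker d\pi)^\omega$ to force the count $k-2r=2d-k$ and conclude that the Casimir-Hamiltonian fields frame the vertical bundle. Without isotropy you only get the inequality $k-2r\leq 2d-k$ (the Casimir fields are independent and vertical, nothing more), so both the spanning claim and condition (3) are left hanging, and your two unproved statements are in fact equivalent to one another. The paper closes this loop in the opposite order: it first obtains the spanning of the vertical bundle by the $X_{\pi^*a_i}$ (a dimension count identifying the corank of $\Pi$ with the fiber dimension $n=2d-k$), and only then deduces isotropy from the almost-Poisson-map identity above. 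To repair your write-up you should delete the claim that isotropy is definitional, justify the equality in your dimension count directly (or take the spanning of $\ker d\pi$ by the $X_{\pi^*a_i}$ as the primary claim), and then prove condition (3) by the displayed Poisson-bracket computation.
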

\begin{proof}
	One direction is simply part of the content of Theorem \ref{IASHSmain}. Now suppose that $(M, \omega)$ is an ASCIR over $(B, \Pi, \eta)$. Conditions (1) and (2) in Definition \ref{IASHS} are also part of Definition \ref{ASCIR}. Note that $\{\pi^*f, \pi^*g\}=d\pi^*g(X_{\pi^*f})=\pi^*(dg(\pi_*X_{\pi^*f}))$, while $\pi^*\{f, g\}=\pi^*(dg(X_f))$. Since $\pi$ is an almost Poisson and submersive map, $dg(\pi_*X_{\pi^*f})=dg(X_f)$ for any $f, g\in C^\infty(B)$. So $\pi_*X_{\pi^*f}=X_f$. Let $(a_1, \cdots, a_n)$ be local Casimir coordinate functions on an open neighborhood $U$ of $B$ where the level sets are the almost symplectic leaves. Then $\pi_*X_{\pi^*a_i}=X_{a_i}=0$. It follows that $\{X_{\pi^*a_i}\}_{i=1}^n$ spans the tangent bundle of the fibers over $U$. Moreover, $X_{\pi^*a_i}$ is strongly Hamiltonian because
	\begin{align*}
		\mathcal{L}_{X_{\pi^*a_i}}\omega&=\iota_{X_{\pi^*a_i}}d\omega+d\iota_{X_{\pi^*a_i}}\omega\\
		                                                       &=\iota_{X_{\pi^*a_i}}\pi^*\eta+d(d(\pi^*a_i))\\
		                                                       &=0
	\end{align*}
So $\pi: M\to B$ satisfies condition (4) in Definition \ref{IASHS}. Condition (3) follows from the fact that 
\[\omega(X_{\pi^*a_i}, X_{\pi^*a_j})=\{\pi^*a_i, \pi^*a_j\}_M=\pi^*\{a_i, a_j\}_B=0\]
\end{proof}

\begin{remark}
	Varying the twisting 3-form $\eta$ of the twisted Poisson manifold $(B, \Pi, \eta)$ by $d\gamma$, where $\gamma$ is a 2-form vanishing on each almost symplectic leaf, not only leaves the twisted Poisson structure $\Pi$ and the tropical affine structure unchanged, but also does not change the topological structure of any ASCIR over $(B, \Pi, \eta)$, for if $(M, \omega)$ is an ASCIR over $(B, \Pi, \eta)$, then $(M, \omega+\pi^*\gamma)$ is an ASCIR over $(B, \Pi, \eta+d\gamma)$. 
\end{remark}

\section{Lie group bundles, torsors, tropical affine structures and period bundles}\label{tortropaff}
\begin{definition}
	\begin{enumerate}
		\item A bundle of Lie groups $\mathcal{G}$ over $B$ is a locally trivial fiber bundle with fibers being a Lie group $G$ such that the transition functions take values in the automorphism group of $G$.
		\item An action of the bundle of Lie groups $\mathcal{G}\to B$ on a fiber bundle $\pi: M\to B$ is a smooth map $\mathcal{A}: \mathcal{G}\times_B M\to M$ such that 
		\begin{enumerate}
			\item $\pi(\mathcal{A}(g, m))=\pi(m)$,
			\item $\mathcal{A}(1, m)=m$, and
			\item $\mathcal{A}(g, \mathcal{A}(h, m))=\mathcal{A}(gh, m)$.
		\end{enumerate}
		$M$ is a $\mathcal{G}$-\emph{torsor} if each fiber of $\mathcal{G}$ acts on the corresponding fiber of $M$ freely and transitively, i.e. if $(\mathcal{A}, \pi_2): \mathcal{G}\times_B M\to M\times_B M$ is a diffeomorphism.
	\end{enumerate}
\end{definition}
An example of bundles of Lie groups is any Euclidean vector bundle, whose fibers are isomorphic to $\mathbb{R}^n$. A principal $G$-bundle over $B$ furnishes an example of a torsor, where the bundle of Lie groups acting on it is the trivial bundle $B\times G$. On the other hand, if $M$ is a $\mathcal{G}$-torsor, then it is locally a principal $G$-bundle. Note that a bundle of Lie groups is an example of a \emph{Lie groupoid}, where the source and target maps are identical.

Suppose $(M, \omega)$ is an ASCIR over $(B, \Pi, \eta)$. The Euclidean vector bundle
\[\nu^*\mathcal{F}:=\{(b, \alpha)\in T^*_bB|\alpha|_{T_bL_b}=0, b\in B\}\]
where $L_b$ is the leaf passing through $b$ is a bundle of abelian Lie groups (in fact, it is a bundle of abelian Lie algebras and a Lie algebroid. See \cite[P2]{SS}). It gives a natural action of bundle of Lie groups in the following way. A form $\alpha\in \nu^*_b\mathcal{F}$ gives rise to a vector field $\omega^\sharp(\pi^*\alpha)\in \Gamma(TM|_{\pi^{-1}(b)})$. Since $\alpha=d_bf$ for some function $f$ defined in a neighborhood of $b$ and constant on the almost symplectic leaves of $B$, $\omega^\sharp(\pi^*\alpha)=X_{\pi^*f}$. On the other hand, $\pi_*(X_{\pi^*f})=X_f$ (see the proof of Proposition \ref{IASHSASCIR}),  and $f$ is a local Casimir of the twisted Poisson structure, so $X_f=0$ and $X_{\pi^*f}$ is tangent to the fibers. In particular, $\omega^\sharp(\pi^*\alpha)$ is a vector field of $\pi^{-1}(b)$, which is compact by assumption. Therefore $\omega^\sharp(\pi^*\alpha)$ integrates to a time 1 flow, which we denote by $\varphi_b(\alpha): \pi^{-1}(b)\to \pi^{-1}(b)$. Letting $b$ vary smoothly yields a smooth action of $\nu^*\mathcal{F}$ on $M$. Let $P$ be the stabilizer subbundle of this action. This is a smooth subbundle, and the quotient $\nu^*\mathcal{F}/P$ is a bundle of abelian Lie groups (cf. \cite[Theorem 2 (1) and (3)]{SS}, which is proved using the same arguments in the symplectic case in \cite[Theorem 8.15]{Va}).
\begin{definition}
	We call $P$ the \emph{period bundle} of the ASCIR $(M, \omega)$. 
\end{definition}
We can say something more about the action $\varphi$ and the period bundle $P$. Note that the map $\nu_b^*\mathcal{F}\to T_m\pi^{-1}(b)$ defined by $\alpha\mapsto \omega^\sharp(\pi^*\alpha)$ is an isomorphism. This, together with the assumption that $\pi^{-1}(b)$ is connected, implies that the action $\varphi_b$ is transitive and locally free. It follows that the stabilizer $P_b$ is a discrete subgroup of $\nu_b^*\mathcal{F}$ and $\pi^{-1}(b)\cong\nu_b^*\mathcal{F}/P_b$. We have that $P_b\cong\mathbb{Z}^n$ as $\pi^{-1}(b)$ is compact. To summarize, 

\begin{proposition}(\cite[Theorem 2 (4)]{SS})\label{ASCIRtorsor}
	If $(M, \omega)$ is an ASCIR over a regular twisted Poisson manifold $(B, \Pi, \eta)$, then $M$ is a $\nu^*\mathcal{F}/P$-torsor, where $P$, the period bundle of $M$, is a $\mathbb{Z}^n$-bundle. 
\end{proposition}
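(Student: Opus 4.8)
The plan is to verify the two assertions in turn: first that the fiberwise action of $\nu^*\mathcal{F}/P$ on $M$ constructed above is free and transitive (so that $(\mathcal{A}, \pi_2)$ is a diffeomorphism onto $M\times_B M$), and second that the period bundle $P$ has discrete fiber $\mathbb{Z}^n$. The crucial input, already recorded in the discussion preceding the statement, is that for each $m\in\pi^{-1}(b)$ the linear map $\nu_b^*\mathcal{F}\to T_m\pi^{-1}(b)$, $\alpha\mapsto\omega^\sharp(\pi^*\alpha)|_m$, is an isomorphism; everything else is a matter of integrating this infinitesimal statement and checking smooth dependence on $b$. Note that a dimension count is consistent here: $\nu_b^*\mathcal{F}$ is the conormal space to the leaf, of dimension $k-(2d-2n)=n$, matching the fiber dimension.

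First I would deduce transitivity and local freeness of the flow action $\varphi_b$ on each fiber from this isomorphism. Because the orbit map $\alpha\mapsto\varphi_b(\alpha)\cdot m$ has the above isomorphism as its differential at $\alpha=0$, it is a local diffeomorphism, so each $\varphi_b$-orbit is open in $\pi^{-1}(b)$; since the fiber is connected by hypothesis, there is a single orbit and $\varphi_b$ is transitive. The same isomorphism shows the infinitesimal action has trivial kernel, so the stabilizer $P_b$ is discrete and $\varphi_b$ is locally free. Transitivity together with local freeness then yields the identification $\pi^{-1}(b)\cong\nu_b^*\mathcal{F}/P_b$.

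Next I would pin down $P_b$. As a discrete subgroup of the vector group $\nu_b^*\mathcal{F}\cong\mathbb{R}^n$, the stabilizer $P_b$ is a lattice of some rank; compactness of $\pi^{-1}(b)\cong\nu_b^*\mathcal{F}/P_b$ forces this rank to be maximal, so $P_b\cong\mathbb{Z}^n$ and each fiber is diffeomorphic to $T^n$. Passing to the quotient $\nu^*\mathcal{F}/P$, the descended action is now both free (having quotiented out exactly the stabilizers) and transitive on each fiber, which is the fiberwise content of the torsor condition.

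The main obstacle is the global, rather than fiberwise, part of the statement: promoting the collection of fiberwise bijections to a genuine diffeomorphism $(\mathcal{A}, \pi_2)\colon(\nu^*\mathcal{F}/P)\times_B M\to M\times_B M$, and checking that $P$ really is a locally trivial $\mathbb{Z}^n$-subbundle of $\nu^*\mathcal{F}$ rather than a merely set-theoretic union of lattices. This requires the smooth dependence of the flow $\varphi$ on the base point $b$ together with a local trivialization of $P$ near each $b$; these are precisely the smoothness and bundle-structure claims recorded in \cite[Theorem 2]{SS} (following the symplectic argument of \cite[Theorem 8.15]{Va}), which I would invoke directly. Granting them, the fiberwise free and transitive action assembles into the asserted $\nu^*\mathcal{F}/P$-torsor structure on $M$.
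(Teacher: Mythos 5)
Your proposal is correct and follows essentially the same route as the paper, which presents this argument in the discussion immediately preceding the proposition: the isomorphism $\alpha\mapsto\omega^\sharp(\pi^*\alpha)$ plus connectedness of the fibers gives transitivity and local freeness, discreteness of $P_b$ and compactness force $P_b\cong\mathbb{Z}^n$, and the smooth subbundle structure of $P$ is delegated to \cite[Theorem 2]{SS} and \cite[Theorem 8.15]{Va} exactly as you do. Your added dimension count and the explicit openness-of-orbits argument are harmless elaborations of the same proof.
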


\begin{proposition}(\cite[Proposition 8.16 (i)]{Va} and \cite[Proposition 2]{SS})
	Let $\alpha\in\Gamma(\nu^*\mathcal{F})$, and $\varphi$ the action of $\nu^*\mathcal{F}$ on an ASCIR $M$ as in the above discussion. Then 
	\[\varphi(\alpha)^*\omega=\omega+\pi^*d\alpha\]
\end{proposition}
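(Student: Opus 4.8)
The plan is to recognize $\varphi(\alpha)$ as the time-one flow of a single vertical vector field and then to track how $\omega$ changes along that flow by a Lie-derivative computation. Write $X_\alpha := \omega^\sharp(\pi^*\alpha)$ for the vector field on $M$ determined fibrewise by the section $\alpha$; by non-degeneracy of $\omega$ it is the unique vector field satisfying $\iota_{X_\alpha}\omega = \pi^*\alpha$. As recalled in the discussion preceding Proposition \ref{ASCIRtorsor}, $X_\alpha$ is tangent to the (compact) fibres of $\pi$, so $\pi_*X_\alpha = 0$ and $X_\alpha$ is complete; its time-$t$ flow $\varphi_t$ is then defined for all $t$, preserves each fibre (hence $\pi\circ\varphi_t = \pi$), and $\varphi(\alpha) = \varphi_1$. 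I would then start from the standard identity $\frac{\dd}{\dd t}\varphi_t^*\omega = \varphi_t^*(\mathcal{L}_{X_\alpha}\omega)$, so that $\varphi(\alpha)^*\omega - \omega = \int_0^1 \varphi_t^*(\mathcal{L}_{X_\alpha}\omega)\,\dd t$.

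The core step is to evaluate $\mathcal{L}_{X_\alpha}\omega$ via Cartan's formula,
\[\mathcal{L}_{X_\alpha}\omega = \iota_{X_\alpha}\dd\omega + \dd\,\iota_{X_\alpha}\omega = \iota_{X_\alpha}\pi^*\eta + \dd\pi^*\alpha,\]
using $\dd\omega = \pi^*\eta$ from Definition \ref{ASCIR} and $\iota_{X_\alpha}\omega = \pi^*\alpha$. Here the decisive observation is that $X_\alpha$ is vertical: contracting the pullback form $\pi^*\eta$ with a vector annihilated by $\pi_*$ yields zero, since $(\iota_{X_\alpha}\pi^*\eta)(Y_1,Y_2) = \eta(\pi_*X_\alpha, \pi_*Y_1, \pi_*Y_2) = 0$. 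This is exactly where the twisting term drops out, leaving $\mathcal{L}_{X_\alpha}\omega = \dd\pi^*\alpha = \pi^*\dd\alpha$.

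Finally I would feed this back into the integral. Because $\varphi_t$ preserves fibres, the pullback of a basic form is unchanged: $\varphi_t^*\pi^*\dd\alpha = (\pi\circ\varphi_t)^*\dd\alpha = \pi^*\dd\alpha$ for every $t$. Hence the integrand is the constant $\pi^*\dd\alpha$ and $\varphi(\alpha)^*\omega - \omega = \int_0^1 \pi^*\dd\alpha\,\dd t = \pi^*\dd\alpha$, as claimed; the sign bookkeeping follows the convention $\iota_{X_f}\omega = \dd f$ implicit in the strong Hamiltonian definition, which produces the $+\pi^*\dd\alpha$ in the statement. The computation is short once verticality is in hand, so the only point demanding care — and the one I would flag as the main obstacle — is the verticality and completeness of $X_\alpha$ together with the induced fibre-preservation of $\varphi_t$. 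These are what simultaneously force $\iota_{X_\alpha}\pi^*\eta = 0$ and the flow-invariance of $\pi^*\dd\alpha$, and all of them rest on the identification $\omega^\sharp(\pi^*\alpha) = X_{\pi^*f}$ with $\pi_*X_{\pi^*f} = X_f = 0$ for a local Casimir $f$, established before Proposition \ref{ASCIRtorsor}.
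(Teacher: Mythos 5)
Your proof is correct; the paper itself does not reprove this proposition but simply cites \cite[Proposition 8.16 (i)]{Va} and \cite[Proposition 2]{SS}, and your argument (Cartan's formula applied to the vertical, complete vector field $\omega^\sharp(\pi^*\alpha)$, with $\iota_{X_\alpha}\pi^*\eta=0$ by verticality and flow-invariance of basic forms) is exactly the standard one used there. It also matches the computation the paper carries out in the proof of Proposition \ref{IASHSASCIR}, so nothing further is needed.
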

\begin{corollary}(\cite[Proposition 8.16 (ii)]{Va} and \cite[Corollary 1]{SS})\label{closedP}
	Sections of $P$ are closed 1-forms which vanish on restriction to the almost symplectic leaves of $B$. 
\end{corollary}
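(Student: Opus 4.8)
The plan is to read off both assertions directly from the definition of $P$ as a stabilizer subbundle of $\nu^*\mathcal{F}$, combined with the transformation law $\varphi(\alpha)^*\omega=\omega+\pi^*d\alpha$ established in the preceding Proposition. No genuinely new computation is needed; the whole statement is a matter of unwinding the definition of the period bundle and invoking that one formula.

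For the first assertion — that sections of $P$ vanish on restriction to the leaves — I would simply observe that $P$ is by construction a subbundle of the conormal bundle $\nu^*\mathcal{F}$. Hence any (local) section $\alpha$ of $P$ is in particular a section of $\nu^*\mathcal{F}$, and the defining condition $\alpha|_{T_bL_b}=0$ built into $\nu^*\mathcal{F}$ is precisely the statement that $\alpha$ vanishes on restriction to the almost symplectic leaves. So this half is immediate from the definitions.

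For the closedness I would use that $P_b$ is the stabilizer of the action $\varphi_b$. If $\alpha$ is a section of $P$, then $\alpha(b)\in P_b$ for every $b$, so $\varphi_b(\alpha(b))$ is the identity of $\pi^{-1}(b)$ and therefore $\varphi(\alpha)=\mathrm{id}_M$. Substituting this into the formula of the previous Proposition gives $\omega=\varphi(\alpha)^*\omega=\omega+\pi^*d\alpha$, whence $\pi^*d\alpha=0$. I would then conclude $d\alpha=0$ from the injectivity of $\pi^*$ on differential forms, which holds because $\pi$ is a surjective submersion: any tangent vectors on $B$ lift through $d\pi$, so a form whose pullback vanishes must itself vanish.

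I do not expect a real obstacle here. The only point worth a word of care is that the preceding Proposition is phrased for (global) sections of $\nu^*\mathcal{F}$, whereas ``sections of $P$'' may be read locally; but since the flow $\varphi(\alpha)$ and the proof of the transformation law are local in $b$, the same argument applies verbatim over any open set on which a section of the subbundle $P$ is defined, which is all that is required.
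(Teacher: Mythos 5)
Your argument is correct and is exactly the one the paper intends: the corollary is stated without proof as an immediate consequence of the preceding transformation law $\varphi(\alpha)^*\omega=\omega+\pi^*d\alpha$, with the vanishing on leaves coming from $P\subset\nu^*\mathcal{F}$ and the closedness from the fact that sections of the stabilizer subbundle act trivially, so $\pi^*d\alpha=0$ and hence $d\alpha=0$ since $\pi$ is a surjective submersion. Your remark about the local nature of the argument is also the right observation and poses no difficulty.
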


\begin{definition}
		Let $\omega_{\nu^*\mathcal{F}}$ be the canonical 2-form of $\nu^*\mathcal{F}$ which satisfies $\xi^*\omega_{\nu^*\mathcal{F}}=d\xi$ for $\xi\in\Gamma(\nu^*\mathcal{F})$.  
\end{definition}
Note that $\omega_{\nu^*\mathcal{F}}$ is closed, but not nondegenerate unless the dimension of the leaves is 0, i.e. $\nu^*\mathcal{F}=T^*B$. Moreover, $\omega_{\nu^*\mathcal{F}}$ vanishes on restriction to any local section of $P$ by Corollary \ref{closedP} and $\xi^*\omega_{\nu^*\mathcal{F}}=d\xi$. Hence $\omega_{\nu^*\mathcal{F}}$ descends to a 2-form on $\nu^*\mathcal{F}/P$.

\begin{definition}
	Let $T$ be $\nu^*\mathcal{F}/P$, and $\omega_T$ the 2-form obtained by descending $\omega_{\nu^*\mathcal{F}}$ to $T$. 
\end{definition}

By Theorems \ref{IASHSmain} and \ref{IASHSASCIR}, an ASCIR admits local action coordinates $(a_1, \cdots, a_n)$ which give rise to a tropical affine structure of $B$ transversal to $\mathcal{F}$. One can glue the locally trivial $\mathbb{Z}^n$-bundles $U\times(\mathbb{Z}da_1\oplus\cdots\oplus\mathbb{Z}da_n)$ with transition functions being the linear part of the change of the action coordinate functions to get the period bundle $P$. Conversely, the tropical affine structure of $B$ can be recovered from the period bundle $P$ of any ASCIR: we may take a local frame $(s_1, \cdots, s_n)$ of $P$ over a small enough neighborhood $U$ of $B$ so that by Poincar\'e Lemma and Corollary \ref{closedP}, $s_i$ is locally exact. Any choice of the primitives of the local frame serves as local action coordinates. The linear part of the transition functions of these primitives is exactly the transition functions of $P$, while the translational part is accounted for by the ambiguities of different choices of the primitives, which differ by a constant vector. So the primitives give a tropical affine structure of $B$. To summarize,

\begin{proposition}(\cite[Proposition 4]{SS})\label{tropaffstr}
	$P$ and the tropical affine structure give rise to each other.
\end{proposition}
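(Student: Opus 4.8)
The plan is to prove both directions of the correspondence in Proposition \ref{tropaffstr}, most of which has already been sketched in the discussion immediately preceding the statement. The assertion is that a period bundle $P$ (a $\mathbb{Z}^n$-subbundle of $\nu^*\mathcal{F}$ whose sections are closed 1-forms vanishing on the leaves, as in Corollary \ref{closedP}) and a tropical affine structure transversal to $\mathcal{F}$ determine one another. I would organize the argument as two constructions followed by a check that they are mutually inverse.

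First I would go from a tropical affine structure to a period bundle. Given a foliated atlas $\mathcal{A}=\{(U_\alpha, p_\alpha)\}$ with transition rule $p_\beta = A_{\alpha\beta}p_\alpha + z_{\alpha\beta}$, $A_{\alpha\beta}\in GL(n,\mathbb{Z})$, write $p_\alpha = (a_1^\alpha, \dots, a_n^\alpha)$ in components and set the local lattice bundle to be $U_\alpha \times (\mathbb{Z}\,da_1^\alpha \oplus \cdots \oplus \mathbb{Z}\,da_n^\alpha) \subset \nu^*\mathcal{F}|_{U_\alpha}$; note each $da_i^\alpha$ indeed vanishes on the leaves since the $p_\alpha$ define the foliation. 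Differentiating the transition rule kills the constant $z_{\alpha\beta}$ and gives $dp_\beta = A_{\alpha\beta}\,dp_\alpha$, so the integral transition matrix $A_{\alpha\beta}$ glues these local $\mathbb{Z}^n$-lattices into a global subbundle $P \subset \nu^*\mathcal{F}$, and its sections are closed (being locally integer combinations of exact forms) and leafwise-vanishing, as required. The cocycle condition for $\{A_{\alpha\beta}\}$ follows from that for $\{(A_{\alpha\beta}, z_{\alpha\beta})\}$ in $\textbf{Trop}(n)$ by projecting to the $GL(n,\mathbb{Z})$ factor.

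Conversely I would go from a period bundle to a tropical affine structure. Over a contractible $U$ pick a local frame $(s_1, \dots, s_n)$ of $P$; by Corollary \ref{closedP} each $s_i$ is closed, hence exact on $U$ by the Poincar\'e lemma, so $s_i = da_i$ for some $a_i \in C^\infty(U)$, and $p := (a_1, \dots, a_n)$ is a submersion defining the foliation on $U$. Comparing two such charts $(U_\alpha, p_\alpha)$, $(U_\beta, p_\beta)$ over $U_{\alpha\beta}$: the frames are related by the integral transition matrix of $P$, giving $dp_\beta = A_{\alpha\beta}\,dp_\alpha$, i.e. $d(p_\beta - A_{\alpha\beta}p_\alpha) = 0$; on a connected component this forces $p_\beta - A_{\alpha\beta}p_\alpha = z_{\alpha\beta}$ for a constant vector $z_{\alpha\beta}\in\mathbb{R}^n$, which is exactly the $\textbf{Trop}(n)$-valued transition rule. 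Finally I would observe that the two constructions are inverse to each other: differentiating the primitives recovers the frame of $P$ one started with, and choosing primitives of a frame recovers the affine charts up to the translational ambiguity already built into the definition.

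The constructions themselves are routine once Corollary \ref{closedP} and the Poincar\'e lemma are in hand; the step requiring the most care is the well-definedness and coherence of the correspondence rather than either construction in isolation. Specifically, the main point to verify is that the translational ambiguity $z_{\alpha\beta}$ arising from the choice of primitives in the second construction is a genuine constant (not merely leafwise constant) and that it does not carry cohomological information obstructing a global match — i.e. that the affine structure is well defined independently of the chosen primitives up to the equivalence implicit in Definition \ref{tropical}. This is precisely where one uses that $d(p_\beta - A_{\alpha\beta}p_\alpha) = 0$ on a connected open set forces an honest constant, so I expect this bookkeeping of constants and the cocycle compatibility to be the only genuine obstacle.
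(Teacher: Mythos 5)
Your proposal is correct and follows essentially the same route as the paper, which proves this proposition via the sketch in the paragraph immediately preceding it: gluing the local lattices $U\times(\mathbb{Z}da_1\oplus\cdots\oplus\mathbb{Z}da_n)$ by the linear part of the coordinate changes in one direction, and taking primitives of a local frame of $P$ (via Corollary \ref{closedP} and the Poincar\'e lemma) in the other, with the translational part absorbed by the ambiguity of primitives. Your additional checks (that $z_{\alpha\beta}$ is an honest constant and that the constructions are mutually inverse) only make the paper's sketch more explicit.
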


\section{Chern classes and some sheaf cohomology groups}\label{chern}
In this section, we present a topological invariant, the \emph{Chern class}, and certain sheaf cohomology groups to classify a subcategory of ASCIRs over a fixed regular twisted Poisson manifold $B$ with a given tropical affine structure following \cite{Dui} and \cite{DD}. 
\begin{definition}\label{sheaves}
	Let $(B, \Pi)$ be a regular twisted Poisson manifold with a tropical affine structure transversal to $\mathcal{F}$ induced by the period bundle $P$. We define, on $B$, 
	\begin{enumerate}
		\item $\Omega^k_\mathcal{F}:=$the sheaf of differential $k$-forms which vanish on each almost symplectic leaf, if $k>1$. $\Omega^0_\mathcal{F}:=$the sheaf of smooth functions which are constant on each almost symplectic leaf. 
		\item $\mathcal{Z}^k_\mathcal{F}:=$the sheaf of closed $k$-forms in $\Omega^k_\mathcal{F}$.
		\item $\mathcal{T}:=$the sheaf of smooth sections of $T$.
		\item $\mathcal{P}:=$the sheaf of locally constant sections of $P$. 
		\item $\mathcal{O}_P:=$the structure sheaf of $B$ with tropical affine structure $P$, defined by 
		\[\mathcal{O}_P(U):=\{f\in C^\infty(U)|df\in\mathcal{P}(U)\}\]
		\item $\mathcal{K}_\mathcal{F}:=\mathcal{Z}^1_\mathcal{F}/\mathcal{P}$.
	\end{enumerate}
\end{definition}
It is easy to see that the various sheaves in Definition \ref{sheaves} are linked in the following web of short exact sequences, which has been known since the work \cite{DD}. 
\begin{displaymath}
	\xymatrix{&0\ar[d]&&&\\&\mathbb{R}\ar[d]&0\ar[d]&0\ar[d]&\\&\mathcal{O}_P\ar[d]^d&\mathcal{Z}_\mathcal{F}^1\ar[r]\ar[d]&\mathcal{K}_\mathcal{F}\ar[d]\ar[r]&0\\0\ar[r]&\mathcal{P}\ar[ur]\ar[r]\ar[d]&\Omega_\mathcal{F}^1\ar[r]\ar[d]^d&\mathcal{T}\ar[r]\ar[d]^{d_{P}}&0\\&0&\mathcal{Z}_\mathcal{F}^2\ar[d]&\mathcal{Z}_\mathcal{F}^2\ar[d]&\\&&0&0&}
\end{displaymath}

As shown in Proposition \ref{ASCIRtorsor}, any ASCIR is a $T$-torsor. Therefore, as a first step to addressing the classification problem, we may forget the almost symplectic structure for the time being and classify $T$-torsors over $B$. Let $M$ be a $T$-torsor over $B$, and $\mathcal{U}=\{U_\alpha\}$ be an open cover of $B$ such that $U_\alpha$ admits a smooth local section $s_\alpha$. Because of the transitivity and the freeness of the $T$-action on $M$, we can define $t_{\alpha\beta}\in \mathcal{T}(U_{\alpha\beta})$ by 
\[\varphi(t_{\alpha\beta})s_\beta=s_\alpha\]
$\{t_{\alpha\beta}\}$ defines a Cech 1-cocyle and a class $[t]$ in the sheaf cohomology $H^1(B, \mathcal{T})$, which gives the obstruction for $M$ to possess a global section. The short exact sequence
\[0\longrightarrow \mathcal{P}\longrightarrow\Omega_\mathcal{F}^1\longrightarrow\mathcal{T}\longrightarrow 0\]
induces the long exact sequence of sheaf cohomology groups
\[\cdots\longrightarrow H^1(B, \Omega_\mathcal{F}^1)\longrightarrow H^1(B, \mathcal{T})\stackrel{\delta}{\longrightarrow}H^2(B, \mathcal{P})\longrightarrow H^2(B, \Omega_\mathcal{F}^1)\longrightarrow\cdots\]
Since $\Omega_\mathcal{F}^1$ is a fine sheaf, $\delta$ is an isomorphism.

\begin{definition}[cf. \cite{Dui}]
	The class $c(M):=\delta([t])\in H^2(B, \mathcal{P})$ is called the Chern class of $M$ as a $T$-torsor.
\end{definition}
A $T$-torsor has vanishing Chern class if and only if it possesses a global section or, equivalently, is isomorphic to $T$ as a $T$-torsor. The Chern classes measure the obstruction of the existence of a global section of a $T$-torsor, so they are purely topological in nature and independent of the almost symplectic structure, if any, of the $T$-torsor. They provide the complete invariant of the classification of $T$-torsors on $B$. In category theoretic language, if $\text{Tor}_{(B, T)}$ denotes the category of $T$-torsors over $B$, then $H^2(B, \mathcal{P})\cong\pi_0(\text{Tor}_{(B, T)})$.  

\begin{definition}\label{charform}
	A 2-form $\Theta$ is called a \emph{characteristic form} of a regular twisted Poisson manifold $B$ if its restriction to any leaf is its almost symplectic form. 
\end{definition}
For the existence of characteristic forms, see the proof of \cite[Proposition 3.1]{BG-N}, which shows that forms satisfying the conditions in Definition \ref{charform} are not unique. As we are going to take the almost symplectic structure into account in solving the classification problem, we shall first consider a subcategory of ASCIRs over $B$. 
\begin{definition}
	Fixing a choice of the characteristic form $\Theta$, we let $\text{ASCIR}_0(B, \Pi, P, \Theta)$ be the category of ASCIRs over $(B, \Pi)$ with period bundle $P$ and twisting form $d\Theta$.  
\end{definition}
\begin{lemma}\label{cotangentiso}
	Suppose $\pi: (M, \omega)\to (B, \Pi)$ admits a global section $s$. Let $\varphi_s: T\to M$ be defined by $\varphi_s(b, \xi)=\varphi(\xi)(s(b))$. Then $\varphi_s$ is $T$-equivariant, and $\varphi_s^*\omega=\omega_T+\pi_T^*s^*\omega$.
\end{lemma}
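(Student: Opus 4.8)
The plan is to verify the two claims of Lemma \ref{cotangentiso} separately, namely the $T$-equivariance of $\varphi_s$ and the formula $\varphi_s^*\omega=\omega_T+\pi_T^*s^*\omega$, with the bulk of the work residing in the latter.

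For the $T$-equivariance, I would unwind the definitions directly. The torsor $T=\nu^*\mathcal{F}/P$ acts on itself by the descended fiberwise addition inherited from the abelian group structure on $\nu^*\mathcal{F}$, and it acts on $M$ through the flow action $\varphi$. Given $(b,\xi)\in T$ and an element $\zeta\in T_b$ acting on it, the group law gives $\zeta\cdot(b,\xi)=(b,\xi+\zeta)$, so $\varphi_s(\zeta\cdot(b,\xi))=\varphi(\xi+\zeta)(s(b))$. Since $\varphi$ is a genuine action of the abelian bundle of Lie groups (property (c) of the action together with commutativity), $\varphi(\xi+\zeta)=\varphi(\zeta)\circ\varphi(\xi)$, which equals $\varphi(\zeta)(\varphi_s(b,\xi))=\zeta\cdot\varphi_s(b,\xi)$. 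This step is essentially formal once the additive notation for the $T$-action is pinned down.

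The substantive computation is the pullback formula. The idea is to reduce everything to the known transformation law $\varphi(\alpha)^*\omega=\omega+\pi^*d\alpha$ from the Proposition preceding Corollary \ref{closedP}, and to the defining property $\xi^*\omega_{\nu^*\mathcal{F}}=d\xi$ of the canonical form, which descends to $\omega_T$ on $T$. Working locally, I would write a point of $T$ as $(b,\xi)$ and differentiate $\varphi_s$, splitting a tangent vector into its horizontal part (pushed down to $T_bB$ via $\pi_T$) and its vertical (fiber) part. Pulling back $\omega$ along $\varphi_s$ should produce three kinds of terms: a purely horizontal piece that reconstitutes $\pi_T^*s^*\omega$ by naturality of pullback since $\pi\circ\varphi_s=\pi_T$; a mixed horizontal-vertical piece together with a vertical piece that together assemble into the canonical form, using that the derivative of $\alpha\mapsto\varphi(\alpha)$ in the fiber direction is exactly $\omega^\sharp\circ\pi^*$, the isomorphism $\nu^*_b\mathcal{F}\to T_m\pi^{-1}(b)$ identified earlier. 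Concretely, choosing a local section $\alpha$ of $\nu^*\mathcal{F}$ lifting $\xi$, one has $\varphi_s\circ\alpha=\varphi(\alpha)\circ s$ as maps from the base, so $(\varphi_s\circ\alpha)^*\omega=s^*\varphi(\alpha)^*\omega=s^*\omega+s^*\pi^*d\alpha=s^*\omega+d\alpha$; comparing this with $\alpha^*(\omega_T+\pi_T^*s^*\omega)=\alpha^*\omega_T+s^*\omega=d\alpha+s^*\omega$ (the last equality using $\xi^*\omega_{\nu^*\mathcal{F}}=d\xi$ descended) verifies the identity along arbitrary sections, and since such sections span enough directions this pins down the equality of the two 2-forms.

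The main obstacle I anticipate is the bookkeeping at a general point of $T$ rather than along the zero section: the transformation law $\varphi(\alpha)^*\omega=\omega+\pi^*d\alpha$ is stated for sections of $\nu^*\mathcal{F}$, and I must be careful that the computation along a section $\alpha$ through a given point $(b,\xi)$ genuinely captures the full derivative of $\varphi_s$ there, including how the vertical tangent spaces of $T$ are identified across fibers. The cleanest route is probably to argue that both sides are $T$-invariant 2-forms (the left side by the equivariance just proved, the right side because $\omega_T$ is $T$-invariant and $\pi_T^*s^*\omega$ is pulled back from the base), so it suffices to check equality along the image of the section corresponding to $\xi=0$, i.e. at $\varphi_s(b,0)=s(b)$, where the section computation above applies directly. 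This invariance reduction is the key idea that sidesteps the general-point bookkeeping, and I expect the remaining verification to be routine once it is in place.
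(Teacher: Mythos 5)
Your first half matches the paper exactly: the computation $\xi^*(\varphi_s^*\omega)=s^*\varphi(\xi)^*\omega=s^*\omega+d\xi=\xi^*(\omega_T+\pi_T^*s^*\omega)$ for local sections $\xi$ is precisely the paper's opening move, and the equivariance argument is the same. The gap is in how you promote ``agreement when pulled back by sections'' to equality of $2$-forms. Your proposed reduction rests on the claim that both sides are $T$-invariant, but neither is: for a section $\zeta$ of $T$ one has $\varphi(\zeta)^*\omega=\omega+\pi^*d\zeta$ on $M$, so equivariance gives $\varphi_s^*\omega\mapsto\varphi_s^*\omega+\pi_T^*d\zeta$, and likewise $\omega_T\mapsto\omega_T+\pi_T^*d\zeta$ (fiberwise translation of the canonical form, exactly as on $T^*B$). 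Only the \emph{difference} $\beta:=\varphi_s^*\omega-\omega_T-\pi_T^*s^*\omega$ is invariant (equivalently, both sides are invariant under translation by \emph{closed} local sections). Even after repairing this, the reduction does not close the argument: knowing $\xi^*\beta=0$ for sections through a point only controls $\beta$ on pairs of vectors tangent to graphs of sections, and checking ``along the zero section'' still leaves the vertical--vertical and vertical--horizontal components of $\beta$ undetermined. This is precisely the ``general-point bookkeeping'' you set out to sidestep, and it cannot be sidestepped this way.

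The paper's actual second step is the one you gesture at in your middle paragraph and then abandon: show $\beta$ is horizontal by contracting with the vertical frame $X_{\pi_T^*a_i}=\omega_T^\sharp(\pi_T^*da_i)$. One computes $\iota_{X_{\pi_T^*a_i}}\omega_T=\pi_T^*da_i$, $\iota_{X_{\pi_T^*a_i}}\pi_T^*s^*\omega=0$, and, by differentiating $t\mapsto\varphi(\xi+t\,da_i)(s(b))$ at $t=0$, that $\varphi_{s*}X_{\pi_T^*a_i}=\omega^\sharp(\pi^*da_i)$, whence $\iota_{X_{\pi_T^*a_i}}\varphi_s^*\omega=\pi_T^*da_i$ as well. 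Thus $\iota_{X_{\pi_T^*a_i}}\beta=0$, so $\beta$ is horizontal, and a horizontal form killed by pullback along all local sections vanishes. You should carry out this contraction computation explicitly rather than appeal to invariance; with it, your argument becomes the paper's proof.
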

\begin{proof}
	$T$-equivariance of $\varphi_s$ follows from its definition. Let $\xi\in\Gamma(\mathcal{T})$. We observe that $\varphi_s\circ \xi=\varphi(\xi)\circ s$. Then
	\begin{align*}
		&\xi^*(\varphi_s^*\omega-\omega_T-\pi_T^*s^*\omega)\\
		=&s^*\varphi(\xi)^*\omega-d\xi-s^*\omega\\
		&(d\xi\text{ is well-defined because any representatives of }\xi\text{ in }\Omega_\mathcal{F}^1\\
		&\text{ differ from each other by a closed form represented by a global section of }P)\\
		=&s^*(\omega+\pi^*d\xi)-d\xi-s^*\omega\\
		=&0
	\end{align*}
	To show that $\varphi_s^*\omega-\omega_T-\pi_T^*s^*\omega$ is 0, it remains to show that it is basic. Let $a_1, \cdots, a_n$ be the local action coordinates in an open neighborhood $U$ of $B$. Then $\{X_{\pi_T^*a_i}:=\omega_T^\sharp(\pi_T^*da_i)\}_{1\leq i\leq n}$ spans the tangent spaces of the fibers of $T$ over $U$. So it suffices to show that $\iota_{X_{\pi_T^*a_i}}(\varphi_s^*\omega-\omega_T-\pi_T^*s^*\omega)=0$. On the one hand, we have $\iota_{X_{\pi_T^*a_i}}\pi_T^*s^*\omega=0$ and $\iota_{X_{\pi_T^*a_i}}\omega_T=\pi_T^*da_i$. On the other hand, 
	\[\varphi_s(b, \xi+tda_i)=\varphi(\xi+tda_i)(s(b))\]
	Differentiating with respect to $t$ at $t=0$ yields
	\begin{align*}
		\varphi_{s*}(X_{\pi_T^*a_i})_{(b, \xi)}&=\left.\frac{d}{dt}\right|_{t=0}\varphi(\xi+tda_i)(s(b))\\
		                                                         &=\omega^\sharp(\pi^*da_i)_{\varphi(\xi)(s(b))}
	\end{align*}
	So $\iota_{X_{\pi_T^*a_i}}\varphi_s^*\omega=\iota_{\omega^\sharp(\pi^*da_i)}\omega=\pi^*da_i$, and this completes the proof.
\end{proof}
\begin{lemma}\label{localiso}
	Any $M\in \text{ASCIR}_0(B, \Pi, P, \Theta)$ is $T$-equivariantly, locally almost symplectomorphic to $(T, \omega_T+\pi_T^*\Theta)$.
\end{lemma}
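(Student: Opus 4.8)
The plan is to reduce to the situation of Lemma \ref{cotangentiso} by choosing a local section, and then to correct that section by means of the fiberwise action so that the pulled-back form is exactly $\omega_T+\pi_T^*\Theta$. Since $M$ is a $T$-torsor (Proposition \ref{ASCIRtorsor}), it is locally trivial, so every point of $B$ has a neighborhood $U$ over which $M$ admits a smooth section $s$. Restricting everything to $U$ and applying Lemma \ref{cotangentiso}, I obtain a $T$-equivariant diffeomorphism $\varphi_s\colon T|_U\to M|_U$ with $\varphi_s^*\omega=\omega_T+\pi_T^*s^*\omega$ (it is a diffeomorphism because an equivariant map between torsors is an isomorphism). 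Thus the whole problem reduces to arranging, after possibly modifying $s$ and shrinking $U$, that $s^*\omega=\Theta$.

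First I would check that $s^*\omega$ is itself a characteristic form, i.e. that it restricts to the almost symplectic form on each leaf. Fix $b\in U$ and functions $f,g$; the tangent space to the leaf through $b$ is spanned by the $X_f(b)$. Since $\pi_*X_{\pi^*f}=X_f$ (as in the proof of Proposition \ref{IASHSASCIR}) and $\pi\circ s=\mathrm{id}$, the vectors $s_*X_f$ and $X_{\pi^*f}$ differ by a vertical vector $V_f$, and I would expand $(s^*\omega)(X_f,X_g)=\omega(s_*X_f,s_*X_g)$ accordingly. The cross terms vanish because $\iota_{X_{\pi^*f}}\omega=d\pi^*f$ annihilates vertical vectors, the purely vertical term $\omega(V_f,V_g)$ vanishes because the fibers are isotropic, and the surviving term is $\omega(X_{\pi^*f},X_{\pi^*g})=\pi^*\{f,g\}_B$, which at $b$ is precisely the leafwise almost symplectic pairing. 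Hence $s^*\omega$ and $\Theta$ restrict to the same form on every leaf, so $s^*\omega-\Theta\in\Omega^2_\mathcal{F}(U)$. It is moreover closed, since $d(s^*\omega)=s^*\pi^*\eta=\eta=d\Theta$, the twisting form being $d\Theta$; thus $\Theta-s^*\omega\in\mathcal{Z}^2_\mathcal{F}(U)$.

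Next I would correct the section. By the surjectivity of $d\colon\Omega^1_\mathcal{F}\to\mathcal{Z}^2_\mathcal{F}$ at the sheaf level — the foliated Poincar\'e lemma recorded in the web of short exact sequences of Section \ref{chern} — after shrinking $U$ there exists $\alpha\in\Omega^1_\mathcal{F}(U)=\Gamma(\nu^*\mathcal{F}|_U)$ with $d\alpha=\Theta-s^*\omega$. Replacing $s$ by the section $s'=\varphi(\alpha)\circ s$ and invoking the identity $\varphi(\alpha)^*\omega=\omega+\pi^*d\alpha$, I obtain $s'^*\omega=s^*\omega+d\alpha=\Theta$. Applying Lemma \ref{cotangentiso} to $s'$ then yields a $T$-equivariant diffeomorphism $\varphi_{s'}\colon T|_U\to M|_U$ with $\varphi_{s'}^*\omega=\omega_T+\pi_T^*\Theta$; nondegeneracy of $\omega_T+\pi_T^*\Theta$ is automatic since $\varphi_{s'}$ is a diffeomorphism and $\omega$ is almost symplectic, so $\varphi_{s'}$ is the required local almost symplectomorphism.

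The main obstacle is the leafwise computation of the second paragraph: one must establish that $s^*\omega$ is a characteristic form, which is exactly what forces $\Theta-s^*\omega$ to lie in the foliated complex $\Omega^\bullet_\mathcal{F}$ and hence to be correctable by a section of $\nu^*\mathcal{F}$. Everything else is formal: the passage to a local section is the torsor structure, the differential identity for $\varphi(\alpha)$ is quoted from the earlier proposition, and the existence of the primitive $\alpha$ is precisely the sheaf-level surjectivity built into the diagram of Section \ref{chern}.
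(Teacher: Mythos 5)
Your argument is correct and follows the same route as the paper's proof: take a local section, apply Lemma \ref{cotangentiso}, and correct the section by $\varphi$ of a local primitive of $\Theta-s^*\omega$. The one genuine addition is your leafwise computation showing that $s^*\omega$ is a characteristic form, so that $\Theta-s^*\omega$ lies in $\mathcal{Z}^2_\mathcal{F}$ and its primitive can be chosen in $\Omega^1_\mathcal{F}$ (as it must be for $\varphi$ of it to be defined); the paper invokes only the ordinary Poincar\'e lemma and leaves this point implicit, so your extra step is a worthwhile clarification rather than a detour.
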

\begin{proof}
	Take any local section $s': U\to M$. Then $\varphi_{s'}^*\omega=\omega_{T|_U}+\pi_{T|_U}^*s'^*\omega$ by Lemma \ref{cotangentiso}. On the other hand, $\pi_{T|_U}^*(\Theta-s'^*\omega)$ is obviously closed. $\pi_{T|_U}$ being a submersion, $\Theta-s'^*\omega$ is closed as well. So by Poincar\'e lemma and restricting $U$ to a smaller neighborhood if necessary, $\Theta-s'^*\omega=dt_{ss'}$ for some 1-form $t_{ss'}$ on $U$. Let $s:=\varphi(t_{ss'})s'$. We then have
	\begin{align*}
		&\varphi_{s'}^*(\varphi_s^*)^{-1}(\omega_{T|_U}+\pi_{T|_U}^*\Theta)\\
		=&(\varphi(-t_{ss'}))^*(\omega_{T|_U}+\pi_{T|_U}^*\Theta)\\
		=&\omega_{T|_U}-\pi_{T|_U}^*dt_{ss'}+\pi_{T|_U}^*\Theta\\
		=&\omega_{T|_U}+\pi_{T|_U}^*s'^*\omega\\
		=&\varphi_{s'}^*\omega
	\end{align*}
	It follows that $\varphi_s^*\omega=\omega_{T|_U}+\pi_{T|_U}^*\Theta$. $T$-equivariance of $\varphi_s$ is given in Lemma \ref{cotangentiso}. 
\end{proof}
For $M\in \text{ASCIR}_0(B, \Pi, P, \Theta)$, we define the sheaf of automorphisms $\mathcal{A}ut^M_{(B, \Pi, P, \Theta)}$ on $B$ by 
	\[\mathcal{A}ut^M_{(B, \Pi, P, \Theta)}(U)=\{f\in\text{Diff}(\pi^{-1}(U))|\pi\circ f=\pi, f^*\omega=\omega\}\]
Lemma \ref{localiso} simply says that $\mathcal{A}ut^M_{(B, \Pi, P, \Theta)}$ is isomorphic to $\mathcal{A}ut^{(T, \omega_T+\pi_T^*\Theta)}_{(B, \Pi, P, \Theta)}$ and thus independent of $M$. We shall drop the superscript $M$ from the notation for the sheaf of automorphisms from now on. As will be clear later, $\mathcal{A}ut_{(B, \Pi, P, \Theta)}$ should be thought of as the `structure sheaf' of $B$. 
\begin{proposition}\label{structuresheaf}
	\begin{enumerate}
		\item We have the short exact sequence 
		\[0\longrightarrow \mathcal{O}_P\longrightarrow\Omega_\mathcal{F}^0\stackrel{\varphi\circ d}{\longrightarrow}\mathcal{A}ut_{(B, \Pi, P, \Theta)}\longrightarrow 0\]
		\item $\varphi: \mathcal{K}_\mathcal{F}\longrightarrow\mathcal{A}ut_{(B, \Pi, P, \Theta)}$ is an isomorphism.
	\end{enumerate}
\end{proposition}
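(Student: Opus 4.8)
The plan is to prove part (2) first and then deduce part (1) from it, since the substance of both statements is the identification of local automorphisms with fiberwise translations by closed forms. For the map $\varphi\colon\mathcal{K}_\mathcal{F}\to\mathcal{A}ut_{(B,\Pi,P,\Theta)}$, well-definedness and injectivity are immediate: a local section of $\mathcal{K}_\mathcal{F}$ is represented by a form $\tau\in\mathcal{Z}^1_\mathcal{F}(U)$, and the Proposition that gives $\varphi(\alpha)^*\omega=\omega+\pi^*d\alpha$ yields $\varphi(\tau)^*\omega=\omega+\pi^*d\tau=\omega$, so $\varphi(\tau)$ is indeed an automorphism; injectivity holds because $P$ is exactly the stabilizer of the action $\varphi$, so $\varphi(\tau)=\mathrm{id}$ forces $\tau\in\mathcal{P}(U)$, i.e. $\tau=0$ in $\mathcal{K}_\mathcal{F}$.

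The heart of the matter is surjectivity. Given $f\in\mathcal{A}ut_{(B,\Pi,P,\Theta)}(U)$, I would first show that $f$ preserves each vertical vector field $X_{\pi^*a_i}=\omega^\sharp(\pi^*da_i)$ associated to local action coordinates $a_1,\dots,a_n$. Since $\pi\circ f=\pi$ we have $f^*(\pi^*da_i)=\pi^*da_i$, and since $f^*\omega=\omega$, applying $(f^{-1})^*$ to $\iota_{X_{\pi^*a_i}}\omega=\pi^*da_i$ shows $\iota_{f_*X_{\pi^*a_i}}\omega=\iota_{X_{\pi^*a_i}}\omega$; nondegeneracy of $\omega$ then gives $f_*X_{\pi^*a_i}=X_{\pi^*a_i}$. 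Consequently $f$ commutes with the flows of the $X_{\pi^*a_i}$, which generate the fiberwise $T$-action, so $f$ is $T$-equivariant. An equivariant, fiber-preserving self-map of a torsor is necessarily a translation: transitivity and freeness let one define, for each $b$, the unique $\tau(b)\in T_b$ with $f=\varphi(\tau(b))$ on $\pi^{-1}(b)$, and the torsor diffeomorphism $(\mathcal{A},\pi_2)$ makes $\tau$ a smooth section of $\mathcal{T}$ over $U$. Finally $\omega=f^*\omega=\varphi(\tau)^*\omega=\omega+\pi^*d\tau$ forces $\pi^*d\tau=0$, hence $d\tau=0$ as $\pi$ is a submersion, so $\tau\in\mathcal{Z}^1_\mathcal{F}(U)$ and represents a section of $\mathcal{K}_\mathcal{F}$ with $\varphi(\tau)=f$. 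This proves part (2).

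For part (1), I would read off the map $\varphi\circ d$ through the isomorphism of part (2). The operator $d$ sends $\Omega^0_\mathcal{F}$ into $\mathcal{Z}^1_\mathcal{F}$ (a function constant on the leaves has exact, leafwise-vanishing differential), and postcomposing with the quotient gives a map $d\colon\Omega^0_\mathcal{F}\to\mathcal{K}_\mathcal{F}$. This map is surjective on stalks by the Poincar\'e lemma together with Corollary \ref{closedP}: any local closed form in $\mathcal{Z}^1_\mathcal{F}$ is, after shrinking, $df$ for some $f$ constant on the leaves. Its kernel is precisely $\{f\in\Omega^0_\mathcal{F}:df\in\mathcal{P}\}=\mathcal{O}_P$, and the inclusion $\mathcal{O}_P\hookrightarrow\Omega^0_\mathcal{F}$ is automatic since $df\in\mathcal{P}$ vanishes on the leaves. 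Composing with the isomorphism $\varphi$ of part (2) turns $d$ into $\varphi\circ d$ and yields the asserted short exact sequence $0\to\mathcal{O}_P\to\Omega^0_\mathcal{F}\to\mathcal{A}ut_{(B,\Pi,P,\Theta)}\to 0$.

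I expect the main obstacle to be the equivariance step, namely ruling out the a priori possibility that $f$ acts as a nontrivial diffeomorphism on each (isotropic) torus fiber. Because $\omega$ restricts to zero on the fibers, the condition $f^*\omega=\omega$ says nothing about $f$ fiberwise in isolation; the argument must exploit the ambient form through the vertical vector fields $X_{\pi^*a_i}$ to see that $f$ is forced to be a translation. Everything else---well-definedness, injectivity, the identification of the kernel, and the reduction of part (1) to part (2)---is formal once this is in place, and Lemma \ref{localiso} guarantees the local model is uniform in $M$, so that these statements are genuinely about the sheaf $\mathcal{A}ut_{(B,\Pi,P,\Theta)}$ and not any particular $M$.
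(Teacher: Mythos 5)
Your proof is correct, and it establishes the key fact---that every local automorphism is translation by (the class of) a closed leafwise-vanishing $1$-form---by a genuinely different mechanism than the paper. The paper proves part (1) first: it uses Lemma \ref{localiso} to pass to the local model $\bigl(U\times T^n,\sum_i da_i\wedge d\alpha_i+\pi_1^*\Theta\bigr)$, writes $f$ in action-angle coordinates as $(a,b,\alpha)\mapsto(a,b,f_1,\dots,f_n)$, and extracts from $f^*\omega=\omega$ the identity $d\bigl(\sum_i(f_i-\alpha_i)\,da_i\bigr)=0$, which forces $f_i-\alpha_i$ to depend only on the action coordinates; part (2) is then deduced by the quotient manipulation $\Omega^0_\mathcal{F}/\mathcal{O}_P\cong\mathcal{Z}^1_\mathcal{F}/\mathcal{P}\cong\mathcal{K}_\mathcal{F}$. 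You reverse the order and replace the coordinate computation by an invariant argument: nondegeneracy of $\omega$ gives $f_*X_{\pi^*a_i}=X_{\pi^*a_i}$, hence $T$-equivariance, hence $f=\varphi(\tau)$ for a smooth section $\tau$ of $\mathcal{T}$ (using commutativity of the fibers of $T$ and the torsor diffeomorphism for well-definedness and smoothness), and $f^*\omega=\omega$ then kills $d_P\tau$. This buys a proof of (2) that works directly on an arbitrary $M$ without first normalizing to the local model---you invoke Lemma \ref{localiso} only to see the sheaf is independent of $M$---at the cost of a longer chain of soft arguments. Two small points to polish: the formula $\varphi(\alpha)^*\omega=\omega+\pi^*d\alpha$ is stated for sections of $\nu^*\mathcal{F}$, so you should lift $\tau$ locally to $\Omega^1_\mathcal{F}$ before differentiating (equivalently, phrase the conclusion as $d_P\tau=0$, i.e.\ $\tau\in\mathcal{K}_\mathcal{F}=\ker d_P$); and in part (1), surjectivity of $d\colon\Omega^0_\mathcal{F}\to\mathcal{K}_\mathcal{F}$ needs connectedness of the plaques of a foliated chart so that a Poincar\'e primitive of a form in $\mathcal{Z}^1_\mathcal{F}$ actually lies in $\Omega^0_\mathcal{F}$. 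Both are routine.
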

\begin{proof}
	\begin{enumerate}
		\item Obviously the kernel of $\varphi\circ d$ is $\mathcal{O}_P$. It remains to show that $\varphi\circ d$ is surjective and the sequence is exact in the middle. For any point $b\in B$, we can find a neighborhood $U$ such that $(\pi_T^{-1}(U), \omega_{T|_U}+\pi_{T|_U}^*\Theta)\cong (U\times T^k, \sum_{i=1}^n da_i\wedge d\alpha_i+\pi_1^*\Theta)$, where $a_i$ are local action coordinates with $da_i\in\mathcal{P}(U)$, and $\alpha_i$ are angle coordinates. By the above remark, we may consider 
		\[\mathcal{A}ut^{(T, \omega_T+\pi_T^*\Theta)}_{(B, \Pi, P, \Theta)}(U)=\{f\in\text{Diff}(\pi_1^{-1}(U))| \pi_1\circ f=\pi_1, f^*(\sum_{i=1}^nda_i\wedge d\alpha_i+\pi_1^*\Theta)=\sum_{i=1}^n da_i\wedge d\alpha_i+\pi_1^*\Theta\}\]
	In local coordinates, 
	\[f(a_1, \cdots, a_n, b_1, \cdots, b_{2d-2n}, \alpha_1, \cdots, \alpha_n)=(a_1, \cdots, a_n, b_1, \cdots, b_{2d-2n}, f_1, \cdots, f_n)\]
	The condition $f^*(\sum_{i=1}^nda_i\wedge d\alpha_i+\pi_1^*\Theta)=\sum_{i=1}^n da_i\wedge d\alpha_i+\pi_1^*\Theta$ then implies that $d\left(\sum_{i=1}^n (f_i-\alpha_i)da_i\right)=0$, which in turn forces $f_i-\alpha_i$ to be functions independent of the coordinates $b_1, \cdots, b_{2d-2n}, \alpha_1, \cdots, \alpha_n$ (though both $\alpha_i$ and $f_i$ are $\mathbb{R}/\mathbb{Z}$-valued functions, their difference $f_i-\alpha_i$ can be lifted to a well-defined function, as $f$ is a local diffeomorphism). It follows that $\sum_{i=1}^n (f_i-\alpha_i)da_i\in \Omega^1_\mathcal{F}(U)$ and $f=\varphi\left(\sum_{i=1}^n (f_i-\alpha_i)da_i\right)$. By Poincar\'e lemma and restricting $U$ further if necessary, $\sum_{i=1}^n (f_i-\alpha_i)da_i$ is exact. This proves the surjectivity of $\varphi\circ d$. If $f$ is identity, then $\sum_{i=1}^n (f_i-\alpha_i)da_i\in\mathcal{P}(U)$ and any of its primitives is in $\mathcal{O}_P(U)$ by definition. This shows the exactness in the middle of the sequence. 
		\item Part (1) implies that $\varphi\circ d: \Omega_\mathcal{F}^0/\mathcal{O}_P\to \mathcal{A}ut_{(B, \Pi, P, \Theta)}$ is an isomorphism. Besides, $\Omega_\mathcal{F}^0/\mathbb{R}$ and $\mathcal{O}_P/\mathbb{R}$ are isomorphic to $\mathcal{Z}_\mathcal{F}^1$ and $P$ through the exterior differential map. Thus $d$ maps $\Omega_\mathcal{F}^0/\mathcal{O}_P\cong(\Omega_\mathcal{F}^0/\mathbb{R})/(\mathcal{O}_P/\mathbb{R})$ isomorphically onto $\mathcal{Z}_\mathcal{F}^1/\mathcal{P}\cong\mathcal{K}_\mathcal{F}$, and $\varphi: \mathcal{K}_\mathcal{F}\to\mathcal{A}ut_{(B, \Pi, P, \Theta)}$ is indeed an isomorphism. 
	\end{enumerate}
\end{proof}
\begin{remark}
	By (2) of Proposition \ref{structuresheaf} the isomorphism class of $\mathcal{A}ut_{(B, \Pi, P, \Theta)}$ is actually independent of the characteristic form. 
\end{remark}
\begin{theorem}\label{classificationzerotwist}
	\begin{enumerate}
		\item The ASCIRs in $\text{ASCIR}_0(B, \Pi, P, \Theta)$ are classified, up to isomorphism which descends to identity on $B$, by the sheaf cohomology group $H^1(B, \mathcal{A}ut_{(B, \Pi, P, \Theta)})$, which is isomorphic to $H^1(B, \mathcal{K}_\mathcal{F})$. When the leaves of $B$ are points, then $H^1(B, \mathcal{K}_\mathcal{F})\cong H^2(B, \mathcal{O}_P)$. 
		\item (cf. \cite[Proposition 4.1]{DD})We have the long exact sequence 
		\[\cdots \longrightarrow H^1(B, \mathcal{P})\stackrel{d_{P, *}}{\longrightarrow} H^2(B, \mathcal{F})\longrightarrow H^1(B, \mathcal{K}_\mathcal{F})\longrightarrow H^2(B, \mathcal{P})\stackrel{\partial^2}{\longrightarrow} H^3(B, \mathcal{F})\longrightarrow H^2(B, \mathcal{K}_\mathcal{F})\longrightarrow\cdots\]
		which is induced by the short exact sequence
		\[0\longrightarrow \mathcal{P}\longrightarrow \mathcal{Z}_\mathcal{F}^1\longrightarrow\mathcal{K}_\mathcal{F}\longrightarrow 0\]
		The map $\partial^2: H^2(B, \mathcal{P})\to H^3(B, \mathcal{F})$, called the Dazord-Delzant homomorphism, is the composition of the coefficient homomorphism $H^2(B, \mathcal{P})\to H^2(B, \mathcal{Z}_\mathcal{F}^1)$ followed by the isomorphism $H^2(B, \mathcal{Z}_\mathcal{F}^1)\to H^3(B, \mathcal{F})$. Identifying $H^1(B, \mathcal{P})$ with $H^0(B, \mathcal{T})$ and $H^2(B, \mathcal{F})$ with $H^0(B, \mathcal{Z}_\mathcal{F}^2)$, the map $d_{P, *}$ is induced by the exterior differential map $d_P: \mathcal{T}\to \mathcal{Z}_\mathcal{F}^2$. 
\end{enumerate}
\end{theorem}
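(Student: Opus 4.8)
My plan is to read Part (1) as the standard classification of "twisted forms" of a single local model and Part (2) as purely homological algebra applied to the web of short exact sequences, the only geometric inputs being the fineness of the sheaves $\Omega^k_\mathcal{F}$ for $k\ge 1$ and the foliated Poincar\'e lemma already encoded in that web. For the first assertion of (1), I would start from Lemma \ref{localiso}: every $M\in \text{ASCIR}_0(B,\Pi,P,\Theta)$ is $T$-equivariantly, locally almost symplectomorphic to the fixed model $(T,\omega_T+\pi_T^*\Theta)$. Choosing an open cover $\{U_\alpha\}$ carrying such local isomorphisms $\psi_\alpha$, the transition maps $g_{\alpha\beta}:=\psi_\alpha\circ\psi_\beta^{-1}$ lie in $\mathcal{A}ut_{(B,\Pi,P,\Theta)}(U_{\alpha\beta})$ and define a \v Cech $1$-cocycle. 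I would then verify in the usual way that its class in $H^1(B,\mathcal{A}ut_{(B,\Pi,P,\Theta)})$ is independent of the choices up to isomorphism covering $\mathrm{id}_B$, that every cocycle arises by gluing copies of the model, and that two objects are isomorphic over $\mathrm{id}_B$ iff their cocycles are cohomologous. Since $\mathcal{A}ut_{(B,\Pi,P,\Theta)}\cong\mathcal{K}_\mathcal{F}$ is abelian by Proposition \ref{structuresheaf}(2), this $H^1$ is ordinary sheaf cohomology, and the induced isomorphism $H^1(B,\mathcal{A}ut_{(B,\Pi,P,\Theta)})\cong H^1(B,\mathcal{K}_\mathcal{F})$ completes the classification.

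For the leaves-are-points case, splicing the two parts of Proposition \ref{structuresheaf} yields the short exact sequence $0\to\mathcal{O}_P\to\Omega^0_\mathcal{F}\xrightarrow{\varphi\circ d}\mathcal{K}_\mathcal{F}\to 0$. When the leaves are points, $\Omega^0_\mathcal{F}=C^\infty_B$ is a fine sheaf, so $H^1(B,\Omega^0_\mathcal{F})=H^2(B,\Omega^0_\mathcal{F})=0$, and the associated long exact sequence collapses to $H^1(B,\mathcal{K}_\mathcal{F})\cong H^2(B,\mathcal{O}_P)$. It is exactly here that the hypothesis on the leaves is essential: for positive-dimensional leaves, $\Omega^0_\mathcal{F}$, the functions constant along leaves, is not fine.

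For Part (2) I would simply take the long exact cohomology sequence of $0\to\mathcal{P}\to\mathcal{Z}^1_\mathcal{F}\to\mathcal{K}_\mathcal{F}\to 0$ from the web. To rewrite the $\mathcal{Z}^1_\mathcal{F}$ terms, note that the columns of the web splice into a fine resolution $0\to\mathcal{Z}^1_\mathcal{F}\to\Omega^1_\mathcal{F}\xrightarrow{d}\Omega^2_\mathcal{F}\xrightarrow{d}\cdots$, fineness of the $\Omega^k_\mathcal{F}$ with $k\ge 1$ together with the foliated Poincar\'e lemma, so that $H^k(B,\mathcal{Z}^1_\mathcal{F})\cong H^{k+1}(B,\mathcal{F})$, the foliated de Rham cohomology; in particular $H^1(B,\mathcal{Z}^1_\mathcal{F})\cong H^2(B,\mathcal{F})$ and $H^2(B,\mathcal{Z}^1_\mathcal{F})\cong H^3(B,\mathcal{F})$. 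Substituting produces the stated sequence. The map $H^2(B,\mathcal{P})\to H^2(B,\mathcal{Z}^1_\mathcal{F})$ occurring in it is, by construction of the connecting sequence, the coefficient homomorphism of $\mathcal{P}\hookrightarrow\mathcal{Z}^1_\mathcal{F}$; composing with the isomorphism $H^2(B,\mathcal{Z}^1_\mathcal{F})\cong H^3(B,\mathcal{F})$ gives $\partial^2$ precisely as asserted.

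The one point requiring genuine care, the main obstacle such as it is, is identifying $d_{P,*}$ concretely. I would exhibit a morphism of short exact sequences from $0\to\mathcal{P}\to\Omega^1_\mathcal{F}\to\mathcal{T}\to 0$ to $0\to\mathcal{Z}^1_\mathcal{F}\to\Omega^1_\mathcal{F}\xrightarrow{d}\mathcal{Z}^2_\mathcal{F}\to 0$, with vertical arrows the inclusion $\mathcal{P}\hookrightarrow\mathcal{Z}^1_\mathcal{F}$ (legitimate by Corollary \ref{closedP}), the identity on $\Omega^1_\mathcal{F}$, and $d_P\colon\mathcal{T}\to\mathcal{Z}^2_\mathcal{F}$; the right-hand square commutes exactly because $d_P$ is the descent of $d$ to $\mathcal{T}=\Omega^1_\mathcal{F}/\mathcal{P}$. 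Since $\Omega^1_\mathcal{F}$ is fine, the connecting maps $\delta_b\colon H^0(B,\mathcal{T})\to H^1(B,\mathcal{P})$ and $\delta_c\colon H^0(B,\mathcal{Z}^2_\mathcal{F})\to H^1(B,\mathcal{Z}^1_\mathcal{F})\cong H^2(B,\mathcal{F})$ are the identifications in the statement, and naturality of the connecting homomorphism gives $d_{P,*}\circ\delta_b=\delta_c\circ d_P$, which is exactly the claim that $d_{P,*}$ is induced by $d_P$. The difficulty lies only in this diagram chase and in matching the two identifications to the edge maps of the fine resolutions, not in any hard analysis; everything else is formal once Lemma \ref{localiso} and Proposition \ref{structuresheaf} are in hand.
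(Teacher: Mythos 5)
Your proposal is correct and follows essentially the same route as the paper: Part (1) via Lemma \ref{localiso} plus the \v Cech cocycle of transition automorphisms and Proposition \ref{structuresheaf}, the leaves-are-points case via the long exact sequence of $0\to\mathcal{O}_P\to\Omega^0_\mathcal{F}\to\mathcal{K}_\mathcal{F}\to 0$ and fineness of $\Omega^0_\mathcal{F}$, and Part (2) via the long exact sequence of $0\to\mathcal{P}\to\mathcal{Z}^1_\mathcal{F}\to\mathcal{K}_\mathcal{F}\to 0$ together with $H^k(B,\mathcal{Z}^1_\mathcal{F})\cong H^{k+1}(B,\mathcal{F})$. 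Your explicit morphism of short exact sequences identifying $d_{P,*}$ is a detail the paper leaves implicit, but it is the intended argument.
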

\begin{proof}
	\begin{enumerate}
		\item That ASCIRs in $\text{ASCIR}_0(B, \Pi, P, \Theta)$ are classified by $H^1(B, \mathcal{A}ut_{(B, \Pi, P, \Theta)})$ follows from Lemma \ref{localiso} and that $\mathcal{A}ut_{(B, \Pi, P, \Theta)}$ is the sheaf of transition functions of those ASCIRs. That $H^1(B, \mathcal{A}ut_{(B, \Pi, P, \Theta)})$ is isomorphic to $H^1(B, \mathcal{K}_\mathcal{F})$ follows from (2) of Proposition \ref{structuresheaf}. Now suppose $B$ is foliated by points. The short exact sequence in (1) of the same Proposition induces the long exact sequence of sheaf cohomology groups
		\[\cdots\longrightarrow H^k(B, \Omega_\mathcal{F}^0)\longrightarrow H^k(B, \mathcal{A}ut_{(B, \Pi, P, \Theta)})\longrightarrow H^{k+1}(B, \mathcal{O}_P)\longrightarrow H^{k+1}(B, \Omega_\mathcal{F}^0)\longrightarrow\cdots\]
		Since $\Omega_\mathcal{F}^0$ is a fine sheaf in this case, $H^k(B, \mathcal{A}ut_{(B, \Pi, P, \Theta)})$ is isomorphic to $H^{k+1}(B, \mathcal{O}_P)$. 
		\item The short exact sequence $0\longrightarrow \mathcal{P}\longrightarrow \mathcal{Z}_\mathcal{F}^1\longrightarrow\mathcal{K}_\mathcal{F}\longrightarrow 0$ induces the long exact sequence
		\[\cdots\longrightarrow H^1(B, \mathcal{Z}_\mathcal{F}^1)\longrightarrow H^1(B, \mathcal{K}_\mathcal{F})\longrightarrow H^2(B, \mathcal{P})\longrightarrow H^2(B, \mathcal{Z}_\mathcal{F}^1)\longrightarrow H^2(B, \mathcal{K}_\mathcal{F})\longrightarrow\cdots\]
		Noting the isomorphism 
$H^k(B, \mathcal{Z}_\mathcal{F}^1)\cong H^{k+1}(B, \mathcal{F})$, we get the desired long exact sequence and the interpretation of the Dazord-Delzant homomorphism $\partial^2$ and $d_{P, *}$. 
	\end{enumerate}
\end{proof}
\begin{corollary}\label{classificationzerotwist2}
	Endowing $\pi_0(\text{ASCIR}(B, \Pi, P, \Theta))$ with group and topological structures by means of identifying $\pi_0(\text{ASCIR}(B, \Pi, P, \Theta))$ with $H^1(B, \mathcal{K}_\mathcal{F})$ (cf. Theorem \ref{classificationzerotwist} (1)), we have the short exact sequence
	\[0\longrightarrow H^2(B, \mathcal{F})/d_{P, *}H^1(B, \mathcal{P})\longrightarrow \pi_0(\text{ASCIR}_0(B, \Pi, P, \Theta))\rightarrow\text{ker}(\partial^2)\longrightarrow 0\]
	The group $H^2(B, \mathcal{F})/d_{P, *}H^1(B, \mathcal{P})$, which corresponds to the ASCIRs with twisting form $d\Theta$ and vanishing Chern class, is the identity component of $\pi_0(\text{ASCIR}_0(B, \Pi, P, \Theta))$. 
\end{corollary}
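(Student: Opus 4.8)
\emph{Proof proposal.} The plan is to read everything off the long exact sequence of Theorem \ref{classificationzerotwist}(2) by extracting a short exact sequence from it and then identifying the two end terms. Write $\chi\colon H^1(B,\mathcal{K}_\mathcal{F})\to H^2(B,\mathcal{P})$ for the connecting homomorphism of $0\to\mathcal{P}\to\mathcal{Z}_\mathcal{F}^1\to\mathcal{K}_\mathcal{F}\to 0$, and let $j\colon H^2(B,\mathcal{F})\to H^1(B,\mathcal{K}_\mathcal{F})$ be the map preceding it. Exactness at $H^2(B,\mathcal{P})$ gives $\operatorname{im}\chi=\ker\partial^2$, so $\chi$ surjects onto $\ker\partial^2$; exactness at $H^1(B,\mathcal{K}_\mathcal{F})$ gives $\ker\chi=\operatorname{im} j$; and exactness at $H^2(B,\mathcal{F})$ gives $\ker j=\operatorname{im} d_{P,*}=d_{P,*}H^1(B,\mathcal{P})$. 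The first isomorphism theorem applied to $j$ then yields $\ker\chi=\operatorname{im} j\cong H^2(B,\mathcal{F})/d_{P,*}H^1(B,\mathcal{P})$, and combining this with the surjection $\chi\colon H^1(B,\mathcal{K}_\mathcal{F})\twoheadrightarrow\ker\partial^2$ produces exactly the desired short exact sequence after substituting the identification $\pi_0(\mathrm{ASCIR}_0(B,\Pi,P,\Theta))\cong H^1(B,\mathcal{K}_\mathcal{F})$ of Theorem \ref{classificationzerotwist}(1).

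Next I would identify $\ker\chi$ with the classes of vanishing Chern class. The sheaf inclusions give a morphism of short exact sequences from $0\to\mathcal{P}\to\mathcal{Z}_\mathcal{F}^1\to\mathcal{K}_\mathcal{F}\to 0$ to $0\to\mathcal{P}\to\Omega_\mathcal{F}^1\to\mathcal{T}\to 0$ which is the identity on $\mathcal{P}$ (this is precisely the commuting square in the web of short exact sequences following Definition \ref{sheaves}). By naturality of connecting homomorphisms, $\chi$ factors as the map $H^1(B,\mathcal{K}_\mathcal{F})\to H^1(B,\mathcal{T})$ induced by $\mathcal{K}_\mathcal{F}\hookrightarrow\mathcal{T}$, followed by the isomorphism $\delta\colon H^1(B,\mathcal{T})\to H^2(B,\mathcal{P})$ defining the Chern class. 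Under the classifications of Theorem \ref{classificationzerotwist}(1) and of $T$-torsors by $H^1(B,\mathcal{T})$, this left-hand map is the forgetful assignment of an ASCIR to its underlying torsor, so $\chi$ sends the class of an ASCIR to the Chern class of its underlying torsor. Hence $\ker\chi\cong H^2(B,\mathcal{F})/d_{P,*}H^1(B,\mathcal{P})$ is exactly the set of classes of ASCIRs with vanishing Chern class (equivalently, admitting a global section), all of which carry twisting form $d\Theta$ by definition of $\mathrm{ASCIR}_0(B,\Pi,P,\Theta)$.

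Finally, for the identity-component assertion I would argue topologically. The term $H^2(B,\mathcal{F})\cong H^1(B,\mathcal{Z}_\mathcal{F}^1)$ is the cohomology of a sheaf of real vector spaces, hence is itself a real topological vector space; its quotient $H^2(B,\mathcal{F})/d_{P,*}H^1(B,\mathcal{P})$ is therefore a continuous image of a connected space and so is connected. On the other hand $\mathcal{P}$ is a locally constant sheaf modelled on $\mathbb{Z}^n$, so $H^2(B,\mathcal{P})$ is discrete, and its subgroup $\ker\partial^2$ is discrete. In the topological group $H^1(B,\mathcal{K}_\mathcal{F})$ the homomorphism $\chi$ is then continuous into a discrete group, so $\ker\chi$ is open and closed; being connected and containing the identity, it must coincide with the identity component, which is the final claim.

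The routine part is the extraction of the short exact sequence, which is pure homological algebra. The step requiring the most care is the identity-component claim: it depends on pinning down the natural topologies on the sheaf cohomology groups (the real-vector-space topology on the pieces with $\mathbb{R}$- or $\Omega$-coefficients, and the discrete topology on the $\mathcal{P}$-coefficient pieces) and on checking that $\chi$ is continuous for them, so that the connected-subgroup/discrete-quotient dichotomy applies. A secondary point to get right is the naturality argument identifying $\chi$ with the genuine Chern class map, since it is this identification that licenses calling the first term the subgroup of vanishing-Chern-class ASCIRs.
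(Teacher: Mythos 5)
Your proposal is correct and follows essentially the same route as the paper: the paper gives no separate proof of this corollary, treating it as an immediate consequence of the long exact sequence in Theorem \ref{classificationzerotwist}(2), with the interpretation of the maps (the forgetful/Chern-class map and the map $[\gamma]\mapsto[(T,\omega_T+\pi_T^*(\Theta+\gamma))]$) recorded in Remark \ref{gaugemap}, and the identity-component claim justified exactly by the discreteness of $H^2(B,\mathcal{P})$ as the paper does for $\mathrm{Pic}^0$ after Theorem \ref{classification}. Your extraction of the short exact sequence, the naturality argument identifying the connecting map with the Chern class map, and the connected-versus-discrete argument all match the intended reasoning.
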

\begin{remark}\label{gaugemap}
	Identifying $H^1(B, \mathcal{K}_\mathcal{F})$ with $\pi_0(\text{ASCIR}_0(B, \Pi, P, \Theta))$, we have that the map $H^2(B, \mathcal{F})\to H^1(B, \mathcal{K}_\mathcal{F})$ in the long exact sequence sends the class $[\gamma]$ to the isomorphism class of the ASCIR $(T, \omega_T+\pi_T^*(\gamma+\Theta))$. The map $H^1(B, \mathcal{K}_\mathcal{F})\to H^2(B, \mathcal{P})$ just forgets the symplectic structure and gives the Chern class of the ASCIR as a $T$-torsor. 
\end{remark}
\begin{corollary}\label{zerocherntwist}
	If an ASCIR $(M, \omega)$ in $\text{ASCIR}_0(B, \Pi, P, \Theta)$ has vanishing Chern class, then it is isomorphic to $(T, \omega_T+\pi_T^*(\Theta+\gamma))$ for some $\gamma\in Z_\mathcal{F}^2$.
\end{corollary}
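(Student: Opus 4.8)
The plan is to use the vanishing of the Chern class to produce a global section of $M$, transport $\omega$ to $T$ by means of Lemma~\ref{cotangentiso}, and then recognize the resulting correction term as a global closed leafwise-vanishing $2$-form.

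First, since $M$ is a $T$-torsor with $c(M)=0$, it admits a global section $s\colon B\to M$ (vanishing Chern class is equivalent to the existence of such a section). Lemma~\ref{cotangentiso} then supplies a $T$-equivariant diffeomorphism $\varphi_s\colon T\to M$ with $\varphi_s^*\omega=\omega_T+\pi_T^*s^*\omega$, so $(M,\omega)$ is $T$-equivariantly almost symplectomorphic to $(T,\omega_T+\pi_T^*s^*\omega)$. Setting $\gamma:=s^*\omega-\Theta$, the corollary reduces to checking that $\gamma\in Z_\mathcal{F}^2$, i.e. that $\gamma$ is closed and vanishes on restriction to each almost symplectic leaf.

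Closedness is immediate: because $\pi\circ s=\mathrm{id}_B$ and the twisting form of $M$ is $d\Theta$, one has $d(s^*\omega)=s^*\pi^*(d\Theta)=d\Theta$, hence $d\gamma=0$. The substantive step, which I expect to be the main obstacle, is to show $\gamma|_{L_b}=0$ for every leaf. Since $\Theta|_{L_b}$ is by definition (Definition~\ref{charform}) the leafwise almost symplectic form $\omega_{L_b}$, it suffices to prove $(s^*\omega)|_{L_b}=\omega_{L_b}$. The tangent space $T_bL_b$ is spanned by Hamiltonian vectors $X_f=-\Pi^\sharp(df)$, and $\omega_{L_b}$ is characterized by $\omega_{L_b}(X_f,X_g)=\{f,g\}_B$. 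Now both $s_*X_f$ and the vector field $X_{\pi^*f}$ on $M$ project under $\pi_*$ to $X_f$ --- the former since $\pi\circ s=\mathrm{id}_B$, the latter by the identity $\pi_*X_{\pi^*f}=X_f$ established in the proof of Proposition~\ref{IASHSASCIR} --- so $s_*X_f=X_{\pi^*f}+V_f$ with $V_f$ tangent to the fiber.

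To conclude I would expand $(s^*\omega)(X_f,X_g)=\omega(X_{\pi^*f}+V_f,\,X_{\pi^*g}+V_g)$. The term $\omega(V_f,V_g)$ vanishes since the fibers of $\pi$ are isotropic, while each cross term $\omega(X_{\pi^*f},V_g)=(\pi^*df)(V_g)=0$ because $\iota_{X_{\pi^*f}}\omega=\pi^*df$ annihilates vertical vectors. Hence $(s^*\omega)(X_f,X_g)=\omega(X_{\pi^*f},X_{\pi^*g})=\pi^*\{f,g\}_B|_{s(b)}=\{f,g\}_B|_b=\omega_{L_b}(X_f,X_g)$, giving $\gamma|_{L_b}=0$. (Equivalently, one may read this off the local normal form of Theorem~\ref{IASHSmain}: restricting to a leaf sets every $da_i=0$ and leaves only $\tfrac12\sum_{i<j}C_{ij}\,db_i\wedge db_j$, which is the leafwise almost symplectic form.) Thus $\gamma\in Z_\mathcal{F}^2$ and $(M,\omega)\cong(T,\omega_T+\pi_T^*(\Theta+\gamma))$, as required.
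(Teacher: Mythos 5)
Your proof is correct, but it takes a genuinely different route from the paper's. The paper disposes of this corollary in one line: by Theorem \ref{classificationzerotwist}(1) the class of $(M,\omega)$ lives in $H^1(B,\mathcal{K}_\mathcal{F})$, exactness of the sequence $H^2(B,\mathcal{F})\to H^1(B,\mathcal{K}_\mathcal{F})\to H^2(B,\mathcal{P})$ at the middle term shows that a class with vanishing Chern image comes from $H^2(B,\mathcal{F})$, and Remark \ref{gaugemap} identifies that map as $[\gamma]\mapsto[(T,\omega_T+\pi_T^*(\Theta+\gamma))]$. You instead argue directly and constructively: the global section $s$ supplied by $c(M)=0$ gives, via Lemma \ref{cotangentiso}, the equivariant almost symplectomorphism $\varphi_s^*\omega=\omega_T+\pi_T^*s^*\omega$, and you then verify by hand that $\gamma:=s^*\omega-\Theta$ lies in $Z^2_\mathcal{F}$ --- closedness from $d\omega=\pi^*d\Theta$, and leafwise vanishing from the decomposition $s_*X_f=X_{\pi^*f}+V_f$ together with isotropy of the fibers and $\iota_{X_{\pi^*f}}\omega=\pi^*df$. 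All of these steps check out against the paper's conventions ($\pi_*X_{\pi^*f}=X_f$ from the proof of Proposition \ref{IASHSASCIR}, $\omega(X_{\pi^*f},X_{\pi^*g})=\pi^*\{f,g\}_B$, and $\Theta|_{L_b}=\omega_{L_b}$ from Definition \ref{charform}). What your approach buys is an explicit formula for $\gamma$ and independence from the sheaf-cohomological classification machinery; it is in effect the global version of the computation hidden inside Lemma \ref{localiso}, and your leafwise computation makes explicit a point the paper glosses over there (namely that $\Theta-s^*\omega$ vanishes on leaves, which is needed for its primitive to lie in $\Omega^1_\mathcal{F}$ so that $\varphi$ can be applied to it). What the paper's route buys is brevity and the simultaneous packaging of this statement with the rest of the exact-sequence description of the Picard group.
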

\begin{proof}
	By inspecting the long exact sequence in Theorem \ref{classificationzerotwist}, we note that $[(M, \omega)]$ is in the image of the map $H^2(B, \mathcal{F})\to H^1(B, \mathcal{K}_\mathcal{F})\cong \pi_0(\text{ASCIR}_0(B, \Pi, P, \Theta))$, which sends $[\gamma]$ to $[(T, \omega_T+\pi_T^*(\Theta+\gamma))]$ by the above Remark.
\end{proof}

\section{Picard group and classification of ASCIRs}\label{picard}
In the last Section we obtain the identification of $\pi_0(\text{ASCIR}_0(B, \Pi, P, \Theta))$, the set of isomorphism classes of ASCIRs over $B$ with period bundle $P$ and twisting form $d\Theta$, with sheaf cohomology group $H^2(B, \mathcal{O}_P)$. This begs the question of whether there exists a natural group law among ASCIRs that makes the identification a group isomorphism. The answer is affirmative. In this Section we shall construct a `tensor product' analogous to that of line bundles by means of a symplectic reduction construction (cf. \cite{Xu}), and hence define the notion of Picard group (see Definition \ref{picardgpdef}). We also restate the main result of \cite{SS}, Theorem \ref{SSsuffcond}, which gives the necessary and sufficient condition for a torsor to be an ASCIR with a certain twisting 3-form. Based on this and results in Section 4 we give our main results on the classification of ASCIRs and the description of Picard groups (see Theorem \ref{classification}). 

\begin{definition}
	Let $\text{ASCIR}(B, \Pi, P)$ be the category of ASCIRs over $B$ with $P$ as the period bundle.
\end{definition}
Let $(M_1, \omega_1)$, $(M_2, \omega_2)\in\text{ASCIR}(B, \Pi, P)$ and $d\omega_i=\pi^*\eta_i$, $i=1, 2$. Fix a choice of the characteristic form $\Theta$. The fiber product $M_1\times_B M_2$ is a $T\times_B T$-torsor. By Theorems \ref{IASHSmain} and \ref{IASHSASCIR}, $\omega_i$ can be written in the local normal form $\sum_{j=1}^n da_j\wedge d\alpha_j^{(i)}+\frac{1}{2}\sum_{j, l}A_{jl}^{(i)}da_j\wedge da_l+\sum_{j<l}B_{jl}^{(i)}da_j\wedge db_l+\frac{1}{2}\sum_{j, l}C_{jl}db_j\wedge db_l$, where $A^{(i)}$, $B^{(i)}$ and $C$ are skew-symmetric and $C$ is nondegenerate. The same is true of $\Theta$, which can be written as $\frac{1}{2}\sum_{j, l}D_{jl}da_j\wedge da_l+\sum_{j<l}E_{jl}da_j\wedge db_l+\frac{1}{2}\sum_{j, l}C_{jl}db_j\wedge db_l$. Let $\pi_i: M_1\times_B M_2\to M_i$ and $\pi': M_1\times_B M_2\to B$ be natural projections. The $(n+2d)\times(n+2d)$ matrix associated to the local normal form of $\pi_1^*\omega_1+\pi_2^*\omega_2-\pi'^*\Theta$ is 
	\[\begin{pmatrix}
		A^{(1)}+A^{(2)}-D& I_n& I_n& B^{(1)}+B^{(2)}-E\\
		-I_n& 0&0&0\\
		-I_n&0&0&0\\
		-(B^{(1)}+B^{(2)}-E)^T&0&0&C	
	\end{pmatrix}\]
	Let $T^{-}$ be the Lie group bundle over $B$ whose fibers are anti-diagonal of the fibers of $T\times_B T$. The tangent space of the $T^-$-orbits in $M_1\times_B M_2$ is locally spanned by $\partial_{\alpha_j^{(1)}}-\partial_{\alpha_j^{(2)}}$, $j=1, \cdots, n$, and again by means of local normal forms, $\iota_{\partial_{\alpha_j^{(1)}}-\partial_{\alpha_j^{(2)}}}(\pi_1^*\omega_1+\pi_2^*\omega_2)=0$. So $\pi_1^*\omega_1+\pi_2^*\omega_2$ can be pushed down to a 2-form, which we denote by $\omega_1\oplus\omega_2$, of the orbit space $M_1\times_B M_2/T^-$, which is a $T$-torsor (there is a natural residual $T$-action on the orbit space). $\pi'^*\Theta$ is obviously pushed down to $\pi^*\Theta$. The local normal form of $\omega_1\oplus \omega_2-\pi^*\Theta$ then is the $2d\times 2d$ matrix
	\[\begin{pmatrix}
		A^{(1)}+A^{(2)}-D& I_n& B^{(1)}+B^{(2)}-E\\
		-I_n& 0&0\\
		-(B^{(1)}+B^{(2)}-E)^T&0&C	
	\end{pmatrix}\]
	which is nondegenerate. Hence $\omega_1\oplus\omega_2-\pi^*\Theta$ is an almost symplectic form of $M_1\times_B M_2/T^-$.
\begin{definition}\label{tensorproduct}
	We define a binary operation $\otimes$ on $\text{ASCIR}(B, \Pi, P)$ such that $(M_1, \omega_1)\otimes (M_2, \omega_2)$ is $M_1\times_B M_2/T^-$ equipped with the almost symplectic form $\omega_1\oplus\omega_2-\pi^*\Theta$ with twisting form $\eta_1+\eta_2-d\Theta$. 
\end{definition}


\begin{proposition}
	The `tensor product' on $\text{ASCIR}(B, \Pi, P)$ as in Definition \ref{tensorproduct} descends to $\pi_0(\text{ASCIR}(B, \Pi, P))$, the isomorphism classes of ASCIRs on $(B, \Pi)$ with a fixed period bundle $P$.  
\end{proposition}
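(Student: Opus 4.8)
The plan is to verify that $\otimes$ respects isomorphisms, i.e.\ that if $(M_1, \omega_1)\cong(M_1', \omega_1')$ and $(M_2, \omega_2)\cong(M_2', \omega_2')$ as ASCIRs in $\text{ASCIR}(B, \Pi, P)$, then $(M_1, \omega_1)\otimes(M_2, \omega_2)\cong(M_1', \omega_1')\otimes(M_2', \omega_2')$. Here an isomorphism is a fibre-preserving diffeomorphism $f_i\colon M_i\to M_i'$ with $\pi'\circ f_i=\pi$ and $f_i^*\omega_i'=\omega_i$. I would first record that such an $f_i$ is automatically $T$-equivariant: since $f_i^*\omega_i'=\omega_i$ and $\pi'\circ f_i=\pi$, the map $f_i$ intertwines the Hamiltonian vector fields, $f_{i*}\,\omega_i^\sharp(\pi^*\alpha)=\omega_i'^\sharp(\pi'^*\alpha)$ for $\alpha\in\Gamma(\nu^*\mathcal{F})$, hence their time-$1$ flows, so $f_i\circ\varphi(\alpha)=\varphi'(\alpha)\circ f_i$. (The same computation, applied to $d\omega_i=\pi^*\eta_i$, also shows $\eta_1=\eta_1'$ and $\eta_2=\eta_2'$, so the two tensor products carry the same twisting form $\eta_1+\eta_2-d\Theta$, as they must.)

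Next I would assemble the fibre product map $F:=f_1\times_B f_2\colon M_1\times_B M_2\to M_1'\times_B M_2'$. It covers the identity on $B$ and, being built from $T$-equivariant maps, is $(T\times_B T)$-equivariant. Writing $\pi_i$, $\pi_i'$ for the projections to the factors and $\pi'$, $\pi''$ for the projections to $B$, the relations $\pi_i'\circ F=f_i\circ\pi_i$ and $\pi''\circ F=\pi'$ give
\begin{align*}
	F^*\bigl(\pi_1'^*\omega_1'+\pi_2'^*\omega_2'\bigr)&=\pi_1^*f_1^*\omega_1'+\pi_2^*f_2^*\omega_2'=\pi_1^*\omega_1+\pi_2^*\omega_2,\\
	F^*\pi''^*\Theta&=\pi'^*\Theta.
\end{align*}

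Since $F$ is $(T\times_B T)$-equivariant it carries $T^-$-orbits to $T^-$-orbits, so it descends to a diffeomorphism $\bar F\colon (M_1\times_B M_2)/T^-\to(M_1'\times_B M_2')/T^-$ which is equivariant for the residual $T\cong(T\times_B T)/T^-$ action and covers the identity on $B$. It remains to check that $\bar F$ preserves the reduced form. Let $q$, $q'$ be the (surjective submersion) quotient maps, so that by construction $q^*(\omega_1\oplus\omega_2)=\pi_1^*\omega_1+\pi_2^*\omega_2$ and likewise for the primed data, and $\bar F\circ q=q'\circ F$. Then
\[
	q^*\bar F^*(\omega_1'\oplus\omega_2')=F^*q'^*(\omega_1'\oplus\omega_2')=F^*\bigl(\pi_1'^*\omega_1'+\pi_2'^*\omega_2'\bigr)=\pi_1^*\omega_1+\pi_2^*\omega_2=q^*(\omega_1\oplus\omega_2),
\]
and since $q$ is a surjective submersion $q^*$ is injective, whence $\bar F^*(\omega_1'\oplus\omega_2')=\omega_1\oplus\omega_2$; the analogous identity for $\pi^*\Theta$ follows the same way. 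Therefore $\bar F^*(\omega_1'\oplus\omega_2'-\pi^*\Theta)=\omega_1\oplus\omega_2-\pi^*\Theta$, so $\bar F$ is an isomorphism of ASCIRs and $\otimes$ is well-defined on $\pi_0(\text{ASCIR}(B, \Pi, P))$.

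The argument is essentially the functoriality of the symplectic reduction used in Definition \ref{tensorproduct}, and no step presents a genuine obstacle; the only points needing care are the automatic $T$-equivariance of ASCIR isomorphisms (which is what guarantees that $F$ descends to the $T^-$-quotient) and the transfer of the form identity across the quotient, both of which are handled cleanly by the injectivity of pullback along the submersion $q$.
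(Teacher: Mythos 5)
Your proposal is correct and follows essentially the same route as the paper: the key point in both is that an ASCIR isomorphism covering the identity automatically intertwines the vector fields $\omega^\sharp(\pi^*\alpha)$ and hence is $T$-equivariant, so the induced map on fibre products respects the $T^-$-action and descends to the quotient. The only cosmetic differences are that you vary both factors at once where the paper varies one at a time, and you spell out the descent of the reduced form via injectivity of $q^*$ where the paper calls it obvious.
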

\begin{proof}
	It suffices to show that if $f:(M_1, \omega_1)\to(M'_1, \omega'_1)$ is an almost symplectomorphism of ASCIRs which descends to identity on the base $(B, \Pi)$, then one can define as well an almost symplectomorphism of ASCIRs 
	\[g: (M_1\otimes M_2, \omega_1\oplus\omega_2-\pi^*\Theta)\to (M'_1\otimes M_2, \omega'_1\oplus \omega_2-\pi^*\Theta)\]
	which descends to identity on the base. Explicitly, let 
	\begin{align*}
		\widetilde{g}: M_1\times_B M_2&\to M'_1\times_B M_2\\
		(m_1, m_2)&\mapsto (f(m_1), m_2).
	\end{align*}
	We shall show that $\widetilde{g}$ descends to a map $g$ between $M_1\otimes M_2$ and $M'_1\otimes M_2$ which is well-defined. Denote the action of $\alpha\in T$ on $m\in M$ by $\alpha\cdot_M m$. Noting that $f^*\omega'_1=\omega_1$, we have that the vector fields $\omega'^\sharp_1(\pi^*\alpha)$ and $\omega_1^\sharp(\pi^*\alpha)$ are related by 
	\[f_*(\omega^\sharp_1(\pi^*\alpha))=\omega'^\sharp_1(\pi^*\alpha)\]
	Recalling that the $T$-action on $M_1$ (resp. $M'_1$) is given by the time 1 flow of the vector field $\omega^\sharp_1(\pi^*\alpha)$ (resp. $\omega'^\sharp_1(\pi^*\alpha)$), we get 
	\[f(\alpha\cdot_{M_1} m_1)=\alpha\cdot_{M'_1}f(m_1)\]
	So $\widetilde{g}$ respects the $T^-$-action on $M_1\times_B M_2$ and $M'_1\times_B M_2$, and descends to $g$ which is well-defined as claimed. Lastly, $g$ is obviously an almost symplectomorphism. This completes the proof of the Proposition. 
\end{proof}
\begin{proposition}\label{abgrpstructure}
	$\pi_0(\text{ASCIR}(B, \Pi, P))$ is an abelian group equipped with the `tensor product' as in Definition \ref{tensorproduct} as the binary operation. The identity element is the isomorphism class of $(T, \omega_T+\pi_T^*\Theta)$ and the inverse of $[(M, \omega)]$ is $[(M, -\omega+2\pi^*\Theta)]$. 
\end{proposition}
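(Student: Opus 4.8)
The closure of $\otimes$ and its descent to isomorphism classes were established just above, so it remains to verify commutativity, associativity, and the two claimed special classes. Throughout, $B$, $P$ and the characteristic form $\Theta$ are fixed, so every class is represented by a $T$-torsor carrying an almost symplectic form and the pushed-down forms of Definition \ref{tensorproduct} are at our disposal. In each case the plan is identical: exhibit an explicit fibre-preserving diffeomorphism of the relevant quotient $T$-torsors, verify that it is $T$-equivariant for the residual actions, and then check that it intertwines the two descended almost symplectic forms. Since a $T$-equivariant, fibre-preserving map intertwining the forms is automatically an isomorphism of ASCIRs descending to the identity on $B$, all the content sits in the final, form-matching, step.

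Commutativity and associativity are formal. The fibrewise swap $\sigma\colon(m_1,m_2)\mapsto(m_2,m_1)$ carries $T^-$-orbits to $T^-$-orbits and satisfies $\sigma^*(\pi_1^*\omega_2+\pi_2^*\omega_1)=\pi_1^*\omega_1+\pi_2^*\omega_2$ and $\sigma^*\pi'^*\Theta=\pi'^*\Theta$, hence descends to an isomorphism $M_1\otimes M_2\cong M_2\otimes M_1$. For associativity I would realize both triple products as the single quotient $(M_1\times_B M_2\times_B M_3)/N$, where $N$ is the kernel of the fibrewise product $T\times_B T\times_B T\to T$, equipped with the descent of $\pi_1^*\omega_1+\pi_2^*\omega_2+\pi_3^*\omega_3-2\,\pi'^*\Theta$. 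Both $(M_1\otimes M_2)\otimes M_3$ and $M_1\otimes(M_2\otimes M_3)$ map to this model by the evident quotient maps; the forms agree because $\oplus$ is symmetric in its arguments and each application of $\otimes$ contributes exactly one $-\pi^*\Theta$, so both sides carry two subtracted copies of $\Theta$ and twisting form $\eta_1+\eta_2+\eta_3-2\,d\Theta$.

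For the identity I take $(T,\omega_T+\pi_T^*\Theta)$ and define $\Phi\colon M\times_B T\to M$ by $\Phi(m,s)=\varphi(s)(m)$. Since $T$ is abelian, $\Phi$ is constant on $T^-$-orbits and descends to a fibrewise diffeomorphism $\overline\Phi\colon M\otimes T\to M$, which is $T$-equivariant because $\Phi(\varphi(r)m,s)=\varphi(sr)(m)=\varphi(r)\Phi(m,s)$. What must be checked is that $\overline\Phi$ pulls $\omega$ back to the descended form; pulling back once more by the quotient map, this is the identity $\Phi^*\omega=\mathrm{pr}_1^*\omega+\mathrm{pr}_2^*\omega_T$ on $M\times_B T$ (the two copies of $\pi'^*\Theta$ having cancelled). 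In local action–angle coordinates (Theorem \ref{IASHSmain}) $\Phi$ merely adds the angle of $s$ to that of $m$, so $\Phi^*(da_i\wedge d\alpha_i)=da_i\wedge d\alpha_i+da_i\wedge d\beta_i$ while the remaining blocks of $\omega$ are unchanged; equivalently one invokes the action formula $\varphi(\alpha)^*\omega=\omega+\pi^*\,d\alpha$ together with Lemma \ref{cotangentiso}. The twisting forms agree: $\eta+d\Theta-d\Theta=\eta$.

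The inverse carries the real content. The key observation is that, since $\pi^*\Theta$ is annihilated by every fibre-tangent vector, the fibrewise vector field generating the $T$-action attached to $-\omega+2\pi^*\Theta$ is the negative of the one attached to $\omega$; thus $(M,-\omega+2\pi^*\Theta)$ is $M$ with the inverted torsor structure $\alpha\cdot_{M^-}m=(-\alpha)\cdot_M m$, and in particular its period bundle is again $P$. Consequently the anti-diagonal $T^-$ acting on $M\times_B(M,-\omega+2\pi^*\Theta)$ acts exactly as the diagonal $M$-action, so the difference map $\delta\colon M\times_B M\to T$, sending $(m_1,m_2)$ to the unique $t$ with $\varphi(t)(m_2)=m_1$, is constant on its orbits and descends to a fibrewise diffeomorphism $(M,\omega)\otimes(M,-\omega+2\pi^*\Theta)\to T$ that is $T$-equivariant by a direct check. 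It remains to see that $\delta$ intertwines the descended form with $\omega_T+\pi_T^*\Theta$, and I expect this to be the main obstacle. Substituting the normal-form data $A^{(2)}=-A^{(1)}+2D$ and $B^{(2)}=-B^{(1)}+2E$ of $-\omega+2\pi^*\Theta$ (read off against the inverted angle coordinate) into the $2d\times 2d$ matrix of Definition \ref{tensorproduct} collapses its $da\wedge da$ and $da\wedge db$ blocks to $D$ and $E$, i.e. to those of $\Theta$, so that the tensor form becomes precisely the matrix of $\omega_T+\pi_T^*\Theta$. The twisting form is $\eta+(-\eta+2\,d\Theta)-d\Theta=d\Theta$, matching the identity class. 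With commutativity, associativity, identity and inverse verified, $\pi_0(\text{ASCIR}(B,\Pi,P))$ is the asserted abelian group.
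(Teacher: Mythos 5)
Your proof is correct and follows essentially the same route as the paper: the identity is checked via the action map $[(t,m)]\mapsto\varphi(t)(m)$ together with the cancellation of the two copies of $\pi^*\Theta$, and the inverse via the observation that $(-\omega+2\pi^*\Theta)^\sharp(\pi^*\alpha)=-\omega^\sharp(\pi^*\alpha)$ (since fibre-tangent vectors annihilate $\pi^*\Theta$), so that $(M,-\omega+2\pi^*\Theta)$ carries the opposite torsor structure and the anti-diagonal quotient is the trivial torsor $T$. You additionally make explicit the commutativity, associativity and the normal-form verification that the descended form on the inverse pairing is $\omega_T+\pi_T^*\Theta$, all of which the paper asserts without detail; these checks are correct.
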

\begin{proof}
	For any ASCIR $M$, $T\otimes M=T\times_B M/T^-$ is isomorphic to $M$ as $T$-torsors through the $T$-equivariant map $[(t, m)]\mapsto t\cdot m$ (here $T$ acts on $T\times_B M/T^-$ by $s\cdot[(t, m)]=[(st, m)]$, which also pulls the form $\omega$ back to $\omega_T\oplus \omega$, which in turn is the same as $(\omega_T+\pi_T^*\Theta)\oplus \omega-\pi^*\Theta$. So the class $[(T, \omega_T+\pi_T^*\Theta)]$ is indeed the identity.
	The ASCIR $(M, -\omega+2\pi^*\Theta)$ has a $T$-action opposite to that on $(M, \omega)$, because $(-\omega+2\pi^*\Theta)^\sharp(\alpha)=-\omega^\sharp(\alpha)$ for $\alpha\in\Omega_\mathcal{F}^1$. It follows that $(M, \omega)\otimes (M, -\omega+2\pi^*\Theta)$ is isomorphic to $T$ as $T$-torsors. Moreover the form $\omega\oplus(-\omega+2\pi^*\Theta)-\pi^*\Theta$ corresponds to $\omega_T+\pi^*\Theta$ under the said isomorphism. The isomorphism class of $(M, -\omega+2\pi^*\Theta)$ is indeed the inverse of that of $(M, \omega)$. This completes the proof of the Proposition.
\end{proof}
\begin{definition}\label{picardgpdef}
	We call the abelian group $\pi_0(\text{ASCIR}(B, \Pi, P))$ as in Proposition \ref{abgrpstructure} the \emph{Picard group} of the twisted Poisson manifold $(B, \Pi)$ with a fixed characteristic form $\Theta$ and period bundle $P$, and is denoted by $\text{Pic}(B, \Pi, P, \Theta)$. 
\end{definition}
\begin{remark}
	For any two characteristic forms $\Theta$ and $\Theta'$, there is a group isomorphism $\text{Pic}(B, \Pi, P, \Theta)\to\text{Pic}(B, \Pi, P, \Theta')$ which sends $[(M, \omega)]$ to $[(M, \omega+\pi^*(\Theta'-\Theta))]$. So the isomorphism class of the Picard group is independent of the choice of characteristic forms.
\end{remark}
\begin{corollary}\label{homo}
	The map $\text{Pic}(B, \Pi, P, \Theta)\to Z_\mathcal{F}^3(B)$ defined by $[(M, \omega)]\mapsto d\Theta-\eta$ is a group homomorphism. 
\end{corollary}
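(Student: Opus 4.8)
The plan is to check three things in turn: that the assignment $[(M,\omega)] \mapsto d\Theta - \eta$ depends only on the isomorphism class, that its value genuinely lies in $Z_\mathcal{F}^3(B)$, and that it carries the tensor product on $\text{Pic}(B,\Pi,P,\Theta)$ to addition of $3$-forms. The codomain is regarded as an abelian group under addition of forms, and the domain is already a group by Proposition \ref{abgrpstructure}.

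For well-definedness, recall that the twisting form $\eta$ of an ASCIR is characterized by $d\omega = \pi^*\eta$. Since $\pi$ is a surjective submersion, $\pi^*$ is injective on forms on $B$, so $\eta$ is uniquely recovered from $\omega$. If $f\colon M \to M'$ is an isomorphism of ASCIRs descending to the identity on $B$, so that $\pi'\circ f = \pi$ and $f^*\omega' = \omega$, then $\pi^*\eta = d\omega = f^*d\omega' = f^*(\pi')^*\eta' = \pi^*\eta'$, and injectivity of $\pi^*$ gives $\eta = \eta'$. Hence $d\Theta - \eta$ depends only on $[(M,\omega)]$.

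The one step with real geometric content is showing $d\Theta - \eta \in Z_\mathcal{F}^3(B)$. Closedness is automatic, since $d(d\Theta) = 0$ and $\eta$ is closed by Definition \ref{ASCIR}. To see that $d\Theta - \eta$ vanishes on each almost symplectic leaf $L$, I would invoke Remark \ref{twistedpoissonrmk}: the leaf $L$ carries an almost symplectic form $\omega_L$ with $d\omega_L = \eta|_L$, and the characteristic-form hypothesis of Definition \ref{charform} gives $\Theta|_L = \omega_L$. Since exterior differentiation commutes with pullback along the inclusion $L \hookrightarrow B$, we obtain $(d\Theta)|_L = d(\Theta|_L) = d\omega_L = \eta|_L$, so $(d\Theta - \eta)|_L = 0$, as needed.

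It then remains to verify the homomorphism property, which is immediate from Definition \ref{tensorproduct}: the twisting form of $(M_1,\omega_1)\otimes(M_2,\omega_2)$ is $\eta_1 + \eta_2 - d\Theta$, so the class of the product is sent to $d\Theta - (\eta_1 + \eta_2 - d\Theta) = (d\Theta - \eta_1) + (d\Theta - \eta_2)$, the sum of the images of the two factors. As a consistency check, the identity element $[(T, \omega_T + \pi_T^*\Theta)]$ has twisting form $d\Theta$, because $\omega_T$ is closed, and is therefore sent to $0$. I do not anticipate a genuine obstacle here: once the twisting form of the tensor product is read off from Definition \ref{tensorproduct}, the homomorphism identity is a one-line cancellation, and the only place demanding care is the leaf computation confirming that the image lands in $Z_\mathcal{F}^3(B)$.
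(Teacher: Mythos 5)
Your proof is correct and follows the route the paper intends: the paper states this as an immediate corollary of Definition \ref{tensorproduct} and gives no proof, the whole content being that the twisting form of $(M_1,\omega_1)\otimes(M_2,\omega_2)$ is $\eta_1+\eta_2-d\Theta$, whence $d\Theta-(\eta_1+\eta_2-d\Theta)=(d\Theta-\eta_1)+(d\Theta-\eta_2)$. Your additional checks---well-definedness via injectivity of $\pi^*$ for a surjective submersion, and the leafwise computation $(d\Theta)|_L=d\omega_L=\eta|_L$ showing the image lies in $Z^3_{\mathcal{F}}(B)$---are exactly the details the paper leaves implicit, and they are carried out correctly.
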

The next result specifies a necessary topological condition any ASCIR has to satisfy.
\begin{proposition}(cf. \cite[Proposition 5]{SS}) \label{necesscond}The following square is commutative
	\begin{displaymath}
		\xymatrix{\text{Pic}(B, \Pi, P, \Theta)\ar[r]\ar[d]& Z_\mathcal{F}^3(B)\ar[d]\\ H^2(B, \mathcal{P})\ar[r]^{\partial^2}& H^3(B, \mathcal{F})}
	\end{displaymath}
	where the left vertical map is the Chern class map and the right one takes a 3-form to its relative cohomology class.
\end{proposition}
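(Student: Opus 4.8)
The plan is to make the two composites in the square explicit on \v{C}ech cochains and to observe that one and the same collection of local data computes both of them, so that the relative class of $d\Theta-\eta$ falls out of the Dazord--Delzant zig-zag. First I would fix an open cover $\{U_\alpha\}$ of $B$ small enough that each $U_\alpha$ carries a smooth section $s_\alpha$ of $\pi$ and supports local action coordinates, and define $t_{\alpha\beta}\in\mathcal{T}(U_{\alpha\beta})$ by $\varphi(t_{\alpha\beta})s_\beta=s_\alpha$, exactly as in the construction of the Chern class. Choosing lifts $\tilde t_{\alpha\beta}\in\Omega^1_\mathcal{F}(U_{\alpha\beta})=\Gamma(\nu^*\mathcal{F})(U_{\alpha\beta})$ of the $t_{\alpha\beta}$, the cocycle $p_{\alpha\beta\gamma}:=\tilde t_{\beta\gamma}-\tilde t_{\alpha\gamma}+\tilde t_{\alpha\beta}\in\mathcal{P}(U_{\alpha\beta\gamma})$ is the representative of $c(M)=\delta([t])\in H^2(B,\mathcal{P})$ coming from the connecting map of $0\to\mathcal{P}\to\Omega^1_\mathcal{F}\to\mathcal{T}\to 0$.

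Second I would introduce the local $2$-forms $\theta_\alpha:=\Theta-s_\alpha^*\omega$ on $U_\alpha$. Using the local normal forms of Theorem \ref{IASHSmain}, both $\Theta$ and $s_\alpha^*\omega$ restrict on each leaf to the leafwise almost symplectic form $\tfrac12\sum C_{ij}\,db_i\wedge db_j$, so $\theta_\alpha\in\Omega^2_\mathcal{F}(U_\alpha)$; moreover $d\theta_\alpha=d\Theta-d(s_\alpha^*\omega)=d\Theta-s_\alpha^*\pi^*\eta=d\Theta-\eta=:\zeta$, a globally defined closed $3$-form which vanishes on restriction to the leaves by Remark \ref{twistedpoissonrmk}, i.e. $\zeta\in Z^3_\mathcal{F}(B)$. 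The link between the two families is supplied by $\varphi(\xi)^*\omega=\omega+\pi^*d\xi$: since $s_\alpha=\varphi(\tilde t_{\alpha\beta})s_\beta$ and $\pi\circ s_\beta=\mathrm{id}$,
\[
s_\alpha^*\omega=s_\beta^*\bigl(\omega+\pi^*d\tilde t_{\alpha\beta}\bigr)=s_\beta^*\omega+d\tilde t_{\alpha\beta},
\]
whence $\theta_\alpha-\theta_\beta=-d\tilde t_{\alpha\beta}$ on $U_{\alpha\beta}$.

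Third I would trace $\partial^2$, which by Theorem \ref{classificationzerotwist}(2) is the coefficient map $H^2(B,\mathcal{P})\to H^2(B,\mathcal{Z}^1_\mathcal{F})$ followed by the isomorphism $H^2(B,\mathcal{Z}^1_\mathcal{F})\cong H^3(B,\mathcal{F})=Z^3_\mathcal{F}(B)/d\Omega^2_\mathcal{F}(B)$ assembled from the short exact sequences $0\to\mathcal{Z}^1_\mathcal{F}\to\Omega^1_\mathcal{F}\xrightarrow{d}\mathcal{Z}^2_\mathcal{F}\to 0$ and $0\to\mathcal{Z}^2_\mathcal{F}\to\Omega^2_\mathcal{F}\xrightarrow{d}\mathcal{Z}^3_\mathcal{F}\to 0$ (using that the $\Omega^k_\mathcal{F}$ are fine). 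Regarding $\{p_{\alpha\beta\gamma}\}$ as a $\mathcal{Z}^1_\mathcal{F}$-valued $2$-cocycle, the first connecting isomorphism is computed using precisely the lift $\{\tilde t_{\alpha\beta}\}$ (since $\delta\tilde t=p$) and yields the $\mathcal{Z}^2_\mathcal{F}$-valued $1$-cocycle $\{d\tilde t_{\alpha\beta}\}$; the second is computed using the $0$-cochain $\{\theta_\alpha\}$ (since $(\delta\theta)_{\alpha\beta}=\theta_\beta-\theta_\alpha=d\tilde t_{\alpha\beta}$) and yields the global section $d\theta_\alpha=\zeta$. Hence $\partial^2(c(M))=[\zeta]=[d\Theta-\eta]$ in $H^3(B,\mathcal{F})$, which is exactly the relative cohomology class obtained by going the other way around the square, and commutativity follows.

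I expect the main obstacle to be the second step: pinning down that $\theta_\alpha$ genuinely lies in $\Omega^2_\mathcal{F}$ (not merely that $d\theta_\alpha$ does), for which I rely on the explicit local normal form together with the characterization of the leafwise almost symplectic form in Definition \ref{charform} and Remark \ref{twistedpoissonrmk}. The remaining care is purely bookkeeping, namely keeping the sign and indexing conventions of the two successive connecting homomorphisms consistent so that the single cochains $\tilde t_{\alpha\beta}$ and $\theta_\alpha$ serve as the required lifts at each stage.
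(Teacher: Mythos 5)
Your proposal is correct and follows essentially the same route as the paper's proof: the core identity $s_\alpha^*\omega-s_\beta^*\omega=dt_{\alpha\beta}$ (from $\varphi(\xi)^*\omega=\omega+\pi^*d\xi$) and the local primitives $\Theta-s_\alpha^*\omega$ of $d\Theta-\eta$ are exactly the paper's $t_{\alpha\beta}$ and $p_\alpha$. The only difference is presentational: the paper identifies $H^2(B,\mathcal{P})\cong H^1(B,\mathcal{T})$ and $H^3(B,\mathcal{F})\cong H^1(B,\mathcal{Z}^2_\mathcal{F})$ and compares $1$-cocycles directly, whereas you unwind the two connecting homomorphisms at \v{C}ech degree $2$; these are the same computation.
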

\begin{proof}
	Recall that $c(M)$ is represented by the Cech 1-cocycle $t$ such that $s_\alpha=\varphi(t_{\alpha\beta})s_\beta$ for some local sections $s_\alpha$ and $s_\beta$ on $U_\alpha$ and $U_\beta$ respectively, and $t_{\alpha\beta}\in\Omega_\mathcal{F}^1(U_{\alpha\beta})$. If we identify $H^2(B, \mathcal{P})$ with $H^1(B, \mathcal{T})$ and $H^3(B, \mathcal{F})$ with $H^1(B, \mathcal{Z}_\mathcal{F}^2)$, then $\partial^2$ can be identified with $d_{P, *}: H^1(B, \mathcal{T})\to H^1(B, \mathcal{Z}_\mathcal{F}^2)$. It suffices to show that $p_{\alpha\beta}:=dt_{\alpha\beta}$ is a 1-cocyle representing the class of $d\Theta-\eta$. Note that $\varphi_{s_\beta}\circ t_{\alpha\beta}=\varphi(t_{\alpha\beta})s_\beta=s_\alpha$ and hence $t_{\alpha\beta}^*\varphi_{s_\beta}^*=s_\alpha^*$. We have
	\begin{align*}
		s_\alpha^*\omega&=t_{\alpha\beta}^*\varphi_{s_\beta}^*\omega\\
		                                 &=t_{\alpha\beta}^*(\omega_T+\pi_T^*s_\beta^*\omega)\\
		                                 &=dt_{\alpha\beta}+s_\beta^*\omega
	\end{align*}
	Let $p_\alpha:=\Theta-s_\alpha^*\omega$ and $p_\beta:=\Theta-s_\beta^*\omega$. Note that $dp_\alpha=d\Theta-s_\alpha^*d\omega=d\Theta-s_\alpha^*\pi^*\eta=d\Theta-\eta=dp_\beta$. Hence the 1-cocycle $p_{\alpha\beta}:=p_\beta-p_\alpha=s_\alpha^*\omega-s_\beta^*\omega$ represents the cohomology class of $d\Theta-\eta$. But we also have shown that $p_{\alpha\beta}=dt_{\alpha\beta}$. This completes the proof.
\end{proof}
In fact, the topological condition in Proposition \ref{necesscond} is also sufficient for a $T$-torsor to be an ASCIR. This is basically the content of the main result in \cite{SS}, which is a generalization of a similar result in \cite{DD} where the special case of symplectically complete isotropic realization is treated.
\begin{theorem}\label{SSsuffcond}(cf. \cite[Theorem 3]{SS})
	If a $T$-torsor $M$ satisfies $\partial^2c(M)=[d\Theta-\eta]\in H^3(B, \mathcal{F})$ for some 3-form $\eta$, then there exists an almost symplectic form $\omega$ on $M$ which makes it an ASCIR with a twisting form $\eta$. In particular, any $T$-torsor can be made an ASCIR. 
\end{theorem}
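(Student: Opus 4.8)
The plan is to build the almost symplectic form $\omega$ on $M$ by patching together local models built from local sections, so that the whole problem collapses to a single gluing obstruction in $H^3(B,\mathcal{F})$, which the hypothesis $\partial^2 c(M)=[d\Theta-\eta]$ is precisely designed to kill. First I would fix an open cover $\{U_\alpha\}$ of $B$ trivializing $M$, with local sections $s_\alpha$ and transition data $s_\alpha=\varphi(t_{\alpha\beta})s_\beta$, where the $t_{\alpha\beta}\in\Omega^1_\mathcal{F}(U_{\alpha\beta})$ are chosen lifts (along $\Omega^1_\mathcal{F}\to\mathcal{T}$) of the torsor cocycle, so that $\{c_{\alpha\beta\gamma}\}=\{t_{\beta\gamma}-t_{\alpha\gamma}+t_{\alpha\beta}\}$ takes values in $\mathcal{P}$ and represents $c(M)\in H^2(B,\mathcal{P})$.

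The reduction is as follows. If a global $\omega$ with $d\omega=\pi^*\eta$ existed, then Lemma \ref{cotangentiso} together with the identity $\varphi(\alpha)^*\omega=\omega+\pi^*d\alpha$ would force the pulled-back forms $\theta_\alpha:=s_\alpha^*\omega$ to satisfy (i) $d\theta_\alpha=\eta$ and (ii) $\theta_\alpha-\theta_\beta=dt_{\alpha\beta}$, and each $\theta_\alpha$ would be a characteristic form of $B|_{U_\alpha}$. Conversely, I claim that any family $\{\theta_\alpha\}$ of local characteristic forms satisfying (i) and (ii) can be fed back into the local models $\omega_\alpha:=(\varphi_{s_\alpha}^{-1})^*(\omega_T+\pi_T^*\theta_\alpha)$: condition (ii) makes the $\omega_\alpha$ agree on overlaps (a short computation using $\varphi_{s_\alpha}=\varphi_{s_\beta}\circ\tau_{t_{\alpha\beta}}$ and $\tau_{t_{\alpha\beta}}^*\omega_T=\omega_T+\pi_T^*dt_{\alpha\beta}$), while condition (i) gives $d\omega=\pi^*\eta$, and the local normal form of $\omega_T+\pi_T^*\theta_\alpha$ (Theorem \ref{IASHSmain}, exactly as in Lemma \ref{localiso}) is nondegenerate and coinduces $\Pi$, so the glued $\omega$ is an ASCIR with twisting form $\eta$.

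Thus everything reduces to producing the $\theta_\alpha$. I would write $\theta_\alpha=\Theta+\rho_\alpha$ with $\rho_\alpha\in\Omega^2_\mathcal{F}(U_\alpha)$, which makes $\theta_\alpha$ automatically characteristic; then (i) becomes $d\rho_\alpha=\eta-d\Theta$ and (ii) becomes $\rho_\alpha-\rho_\beta=dt_{\alpha\beta}$. The obstruction lives in $H^1(B,\mathcal{Z}^2_\mathcal{F})\cong H^3(B,\mathcal{F})$: running the Čech--de Rham zig-zag that computes the shifting isomorphism $H^2(B,\mathcal{Z}^1_\mathcal{F})\cong H^3(B,\mathcal{F})$, one lifts $\{c_{\alpha\beta\gamma}\}$ through $\{t_{\alpha\beta}\}$, applies $d$, and since each $\Omega^2_\mathcal{F}$ is fine solves $dt_{\alpha\beta}=u_\beta-u_\alpha$ with $u_\alpha\in\Omega^2_\mathcal{F}(U_\alpha)$; the glued global form $\xi:=du_\alpha\in Z^3_\mathcal{F}(B)$ then represents $\partial^2 c(M)$. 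The hypothesis $\partial^2 c(M)=[d\Theta-\eta]$ now supplies a global $\mu\in\Omega^2_\mathcal{F}(B)$ with $\xi=(d\Theta-\eta)+d\mu$. Setting $\rho_\alpha:=\mu-u_\alpha$ simultaneously solves (i), since $d\rho_\alpha=d\mu-\xi=\eta-d\Theta$, and (ii), since $\rho_\alpha-\rho_\beta=u_\beta-u_\alpha=dt_{\alpha\beta}$, which completes the construction.

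The main obstacle is the bookkeeping of this last paragraph: correctly identifying the gluing obstruction $\{dt_{\alpha\beta}\}$ with the Dazord--Delzant homomorphism $\partial^2 c(M)$ under the shifting isomorphism, with sign conventions lined up so that the hypothesis yields a solvable equation for $\mu$ rather than a sign-flipped unsolvable one, and invoking the foliated Poincaré lemma and the fineness of $\Omega^\bullet_\mathcal{F}$ at each stage. Once $\omega$ is built, nondegeneracy and the almost-Poisson property are routine from the local normal form already used in Lemma \ref{localiso} and Section \ref{picard}. Finally, the concluding assertion that any $T$-torsor can be made an ASCIR follows by reversing the roles: given $M$, choose any $\psi\in Z^3_\mathcal{F}(B)$ representing $\partial^2 c(M)$ and set $\eta:=d\Theta-\psi$, a legitimate twisting form for which the hypothesis holds by construction.
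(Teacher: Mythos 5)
Your proposal is correct, but it takes a genuinely different route from the paper for a structural reason: the paper does not prove the main implication at all---that part is quoted from \cite[Theorem 3]{SS}---and its own proof consists solely of the final clause, that any $T$-torsor can be made an ASCIR. For that clause your argument and the paper's are essentially the same: the paper starts from an arbitrary twisting form $\eta_0$ and corrects it by a representative of $[d\Theta-\eta_0]-\partial^2c(M)$, while you start from $d\Theta$ and subtract a representative $\psi$ of $\partial^2c(M)$; in both cases the key input is Remark \ref{twistedpoissonrmk}, and you should state explicitly that $d\Theta$ is itself a twisting form for $\Pi$ (because $\Theta$ is characteristic, $d\Theta$ restricts on each leaf to the differential of the leafwise almost symplectic form, so it differs from any genuine twisting form by an element of $Z_\mathcal{F}^3(B)$). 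Your reconstruction of the main implication---gluing the local models $(\varphi_{s_\alpha}^{-1})^*(\omega_T+\pi_T^*\theta_\alpha)$ along the cocycle condition $\theta_\alpha-\theta_\beta=dt_{\alpha\beta}$, with $\theta_\alpha=\Theta+\mu-u_\alpha$ produced by the \v{C}ech--de Rham zig-zag computing $\partial^2$---is sound and self-contained, and your sign conventions are consistent with the paper's own computation in Proposition \ref{necesscond}, where $dt_{\alpha\beta}=s_\alpha^*\omega-s_\beta^*\omega$ and the local primitives $\Theta-s_\alpha^*\omega$ glue to a representative of $[d\Theta-\eta]$. What your approach buys is independence from the external reference; what the paper buys is brevity. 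The only points to tighten are the standard ones you already flag: pass to a good cover so the lifts $t_{\alpha\beta}\in\Omega_\mathcal{F}^1(U_{\alpha\beta})$ of the $\mathcal{T}$-valued cocycle exist, and note that nondegeneracy and the almost-Poisson property of the glued form follow from the block-matrix computation of Section \ref{picard} precisely because each $\theta_\alpha$ is characteristic.
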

\begin{proof}
We shall show the last claim that any $T$-torsor can be made an ASCIR, which was not shown in \cite{SS}. Suppose $M$ is a $T$-torsor over $(B, \Pi, P, \Theta)$. Let $\eta_0$ be a twisting 3-form of $(B, \Pi)$, and $\alpha=[d\Theta-\eta_0]-\partial^2c(M)\in H^3(B, \mathcal{F})$. Choose a representative 3-form $\widetilde{\alpha}\in Z^3_\mathcal{F}$ of the class $\alpha$. Then $\partial^2c(M)=[d\Theta-(\eta_0+\widetilde{\alpha})]$, and $\eta_0+\widetilde{\alpha}$ is a twisting 3-form of $(B, \Pi)$ as well (see Remark \ref{twistedpoissonrmk}). Hence $M$ can be equipped with a compatible almost symplectic form which makes it an ASCIR with the twisting form $\eta_0+\widetilde{\alpha}$. 
\end{proof}
We are now in a position to present two descriptions of $\text{Pic}(B, \Pi, P, \Theta)$, derived from the commutative square in Proposition \ref{necesscond}, thereby giving some answers to the problem of classifying ASCIRs. In these two descriptions, we fit $\text{Pic}(B, \Pi, P, \Theta)$ into the short exact sequences built from the top and left maps in that commutative square. 
\begin{theorem}\label{classification}
	\begin{enumerate}
		\item We have the exact sequence
		\[0\longrightarrow H^1(B, \mathcal{K}_\mathcal{F})\longrightarrow\text{Pic}(B, \Pi, P, \Theta)\longrightarrow Z_\mathcal{F}^3(B)\]
		where the last map is the one in Corollary \ref{homo}.
		\item\label{NS} We have the short exact sequence
		\[0\longrightarrow\Omega_\mathcal{F}^2/d\Omega_\mathcal{F}^1\longrightarrow\text{Pic}(B, \Pi, P, \Theta)\longrightarrow H^2(B, \mathcal{P})\longrightarrow 0\]
		where the last map is the Chern class map. 
	\end{enumerate}
\end{theorem}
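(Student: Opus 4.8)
The plan is to obtain both sequences by reading off the kernels of the two maps out of $\text{Pic}(B,\Pi,P,\Theta)$ in the commutative square of Proposition \ref{necesscond}: sequence (1) is the kernel sequence of the top horizontal map, and sequence (2) is the kernel sequence of the left vertical (Chern) map. For (1), the map $\text{Pic}(B,\Pi,P,\Theta)\to Z^3_\mathcal{F}(B)$ is the homomorphism of Corollary \ref{homo}, sending $[(M,\omega)]$ to $d\Theta-\eta$, so its kernel is precisely the classes with twisting form $\eta=d\Theta$, i.e. $\pi_0(\text{ASCIR}_0(B,\Pi,P,\Theta))$. That this is a subgroup is immediate, since $\eta_1=\eta_2=d\Theta$ forces the twisting form $\eta_1+\eta_2-d\Theta$ of a tensor product to equal $d\Theta$ again. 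As a set this kernel is $H^1(B,\mathcal{K}_\mathcal{F})$ by Theorem \ref{classificationzerotwist}(1), so the only real content is that the restricted tensor product agrees with the additive group law on $H^1(B,\mathcal{K}_\mathcal{F})$. I would check this on \v{C}ech cocycles: a member of $\text{ASCIR}_0$ is encoded by a $1$-cocycle valued in $\mathcal{A}ut_{(B,\Pi,P,\Theta)}\cong\mathcal{K}_\mathcal{F}$ (Proposition \ref{structuresheaf}(2)), and the fibre-product-then-reduce recipe of Definition \ref{tensorproduct} sends a pair of such cocycles to their product. Granting this, injectivity of $H^1(B,\mathcal{K}_\mathcal{F})\to\text{Pic}(B,\Pi,P,\Theta)$ and exactness in the middle are formal.

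For (2) the right-hand map is the Chern class map, which is onto $H^2(B,\mathcal{P})$ because, by the last assertion of Theorem \ref{SSsuffcond}, every $T$-torsor admits an ASCIR structure. To identify the kernel I would trivialize: a Chern-trivial class is represented by an ASCIR on the trivial torsor $T$, which has the zero section $s_0$, and applying Lemma \ref{cotangentiso} with $\varphi_{s_0}=\mathrm{id}_T$ gives $\omega=\omega_T+\pi_T^*s_0^*\omega$. Setting $\gamma:=s_0^*\omega-\Theta$, the requirement that $\pi_T$ be almost Poisson for $\Pi$ forces $s_0^*\omega$ to be characteristic, i.e. $\gamma\in\Omega^2_\mathcal{F}(B)$; conversely a normal-form computation exactly as in Definition \ref{tensorproduct} shows $(T,\omega_T+\pi_T^*(\Theta+\gamma))$ is an ASCIR and that $\gamma\mapsto[(T,\omega_T+\pi_T^*(\Theta+\gamma))]$ is a homomorphism $\Omega^2_\mathcal{F}(B)\to\ker(\text{Chern})$, surjective by the preceding sentence.

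The hard part will be pinning down the denominator. An isomorphism of ASCIRs covering $\mathrm{id}_B$ is automatically $T$-equivariant (the argument used in the proof that $\otimes$ descends to isomorphism classes), hence a translation $\varphi(\sigma)$ by a global section $\sigma$ of $T$, under which $\omega_T\mapsto\omega_T+\pi_T^*d\sigma$; thus $(T,\omega_T+\pi_T^*(\Theta+\gamma_1))\cong(T,\omega_T+\pi_T^*(\Theta+\gamma_2))$ exactly when $\gamma_1-\gamma_2=d\sigma$. The delicate point is that $\sigma$ need not lift to a global form in $\Omega^1_\mathcal{F}(B)$, the obstruction lying in $H^1(B,\mathcal{P})$ with image under the Dazord--Delzant-type map $d_{P,*}$ of Theorem \ref{classificationzerotwist}(2); one must therefore argue carefully that the resulting subgroup $\{d\sigma\}$ equals $d\Omega^1_\mathcal{F}(B)$ before reading off $\ker(\text{Chern})\cong\Omega^2_\mathcal{F}/d\Omega^1_\mathcal{F}$. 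Once the denominator is settled, exactness on the left and in the middle is immediate, and restricting to closed $\gamma$ (twisting form $d\Theta$) should reproduce the Chern-trivial piece of Corollary \ref{classificationzerotwist2} as a consistency check.
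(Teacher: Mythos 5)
Your proposal follows the paper's proof essentially step for step: part (1) is read off from Theorem \ref{classificationzerotwist} by identifying the kernel of the twisting-form homomorphism with $\pi_0(\text{ASCIR}_0(B, \Pi, P, \Theta))$, and part (2) combines surjectivity of the Chern class map via Theorem \ref{SSsuffcond} with the parametrization of Chern-trivial classes by $(T, \omega_T+\pi_T^*(\Theta+\gamma))$, $\gamma\in\Omega_\mathcal{F}^2$, followed by an analysis of which $\gamma$ give the trivial class. Your additional care in (1) -- checking on \v{C}ech cocycles that the tensor product agrees with the addition on $H^1(B, \mathcal{K}_\mathcal{F})$ -- addresses a point the paper leaves implicit, and your observation that an isomorphism covering $\mathrm{id}_B$ is automatically $T$-equivariant, hence translation by a global section of $T$, is a cleaner route to the paper's conclusion (reached there by a local coordinate computation) that the automorphism is $\varphi$ of something.

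The step you defer is exactly where the paper's own argument is thinnest. The paper finds that $f$ is locally $\varphi$ of a $1$-form in $\Omega_\mathcal{F}^1$ and then asserts $f=\varphi(s)$ for a \emph{global} $s\in\Omega_\mathcal{F}^1(B)$, whence $\gamma=ds$; but as you note, a priori one only obtains a global section $\sigma$ of $\mathcal{T}=\Omega_\mathcal{F}^1/\mathcal{P}$ (the local $1$-forms glue only modulo $\mathcal{P}$), so the subgroup of $\gamma$'s killed in the Picard group is $d_P H^0(B, \mathcal{T})$, which contains $d\Omega_\mathcal{F}^1(B)$ but may exceed it by $d_{P,*}H^1(B, \mathcal{P})$. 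Your proposed consistency check against Corollary \ref{classificationzerotwist2}, whose Chern-trivial piece is $H^2(B, \mathcal{F})/d_{P,*}H^1(B, \mathcal{P})$ rather than $H^2(B, \mathcal{F})$, is precisely the right way to see that this is not an empty worry. So you have correctly isolated the one genuinely delicate point; just be aware that the paper does not supply the "careful argument" you call for either -- it passes from a section of $\mathcal{T}$ to a section of $\Omega_\mathcal{F}^1$ without comment -- so to complete the proof you would need either to justify that passage or to adjust the denominator accordingly.
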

\begin{proof}
	Part (1) follows from Theorem \ref{classificationzerotwist}. Suppose $(M, \omega)$ is an ASCIR with zero Chern class. Then $M$ is isomorphic to $T$ as a $T$-torsor. By Proposition \ref{necesscond}, the twisting form of $M$ satisfies $[d\Theta-\eta]=\partial^2c(M)=0$. So $\eta=d(\Theta+\gamma)$ for some $\gamma\in\Omega_\mathcal{F}^2$. Thus $(T, \omega)\in\text{ASCIR}_0(B, \Pi, P, \Theta+\gamma)$. By Corollary \ref{zerocherntwist}, $[(T, \omega)]=[(T, \omega_T+\pi_T^*(\Theta+\gamma+\delta))]$ for some $\delta\in Z_\mathcal{F}^2$. Moreover, the map $\text{Pic}(B, \Pi, P, \Theta)\to H^2(B, \mathcal{P})$ is onto by Theorem \ref{SSsuffcond}. It follows that the sequence
	\[\Omega_\mathcal{F}^2\longrightarrow\text{Pic}(B, \Pi, P, \Theta)\longrightarrow H^2(B, \mathcal{P})\longrightarrow 0\]
	where the first map is as stated in Remark \ref{gaugemap}, is exact. If $[(T, \omega_T+\pi_T^*(\Theta+\gamma))]$ is the identity element in the Picard group, then there exists a diffeomorphism $f$ of $T$ which descends to identity on $B$ such that $f^*\omega_T=\omega_T+\pi_T^*\gamma$. Using local coordinates, we can let $f(a_1, \cdots, a_n, b_1, \cdots, b_{2d-2n}, \alpha_1, \cdots, \alpha_n)=(a_1, \cdots, a_n, b_1, \cdots, b_{2d-2n}, \alpha_1+f_1, \cdots, \alpha_n+f_n)$. Plugging this to $f^*\omega_T=\omega_T+\pi_T^*\gamma$, we get 
	\[\sum_{i=1}^n\sum_{j=1}^nda_i\wedge \left(\frac{\partial f_i}{\partial a_j}da_j+\frac{\partial f_i}{\partial b_j}db_j+\frac{\partial f_i}{\partial \alpha_j}d\alpha_j\right)=\pi_T^*\gamma\]
	which shows that $f_i$ are independent of $\alpha_i$, the angle coordinates. In other words, $f=\varphi(s)$ for some $s\in\Omega_\mathcal{F}^1$. Now $\varphi(s)^*\omega_T=\omega_T+\pi_T^*ds$ and therefore $\gamma=ds$. 
\end{proof}
\begin{definition}
	We denote the image of $\Omega_\mathcal{F}^2/d\Omega_\mathcal{F}^1$ in the exact sequence in (2) of Theorem \ref{classification} by $\text{Pic}^0(B, \Pi, P, \Theta)$.
\end{definition}
The exact sequence in (2) of Theorem \ref{classification} is a generalization of the one in Corollary \ref{classificationzerotwist2}. As $H^2(B, \mathcal{P})$ is a discrete group, $\text{Pic}^0(B, \Pi, P, \Theta)$ is the identity component of $\text{Pic}(B, \Pi, P, \Theta)$. $\text{Pic}^0(B, \Pi, P, \Theta)$ is to the subgroup of Picard group consisting of isomorphism classes of degree 0 invertible sheaves in algebraic geometry what $H^2(B, \mathcal{P})$ is to the N\'eron-Severi group (see \cite[Definitions p.109]{Ha} and \cite[Remark 6.10.3]{Ha} and the references therein for definitions of invertible sheaves and N\'eron-Severi groups). This analogy first appeared in \cite{Sj}. 

There is an action of $\Omega_\mathcal{F}^2$ on $\text{ASCIR}(B, \Pi, P)$, namely, $\gamma\cdot(M, \omega)=(M, \omega+\pi^*\gamma)$. We call this action the \emph{coarse gauge transformation}. If two ASCIRs are in the same orbit of coarse gauge transformation, we say they are coarse gauge equivalent. Obviously this equivalence relation descends to the Picard group. Part (2) of Theorem \ref{classification} just says that there is a bijection between the coarse gauge equivalence classes and the isomorphism classes of $T$-torsors, and that any coarse gauge equivalence class is a principal homogeneous space of $\Omega_\mathcal{F}^2/d\Omega_\mathcal{F}^1$. 

\section{An example}\label{twistedlieex}

This section is devoted to an example inspired by an SCIR over a certain open subset in $\mathfrak{g}^*$ with the Lie-Poisson structure studied in \cite{DD} (cf. Remark \ref{twistedlieexrmk}). The twisted Poisson manifold in this example was discussed in \cite{SW}.
\subsection{Twisted Poisson structure of an open subset of a compact Lie group}
Let $G$ be a compact connected Lie group of rank $n$ and $\langle\cdot , \cdot\rangle$ be a bi-invariant inner product on $\mathfrak{g}$. A $G$-invariant closed 3-form on $G$ is given by $\eta=\langle\theta^L, [\theta^L, \theta^L]\rangle$, where $\theta^L$ is the left-invariant Maurer-Cartan connection 1-form. In \cite{SW}, a twisted Dirac structure of $G$ with twisting form $\eta$ (called the \emph{Cartan-Dirac structure}) is given by the following maximal isotropic subbundle $L$ of the Courant algebroid $TG\oplus T^*G\cong TG\oplus TG$ (we identify $TG$ with $T^*G$ by the inner product, so the bilinear form on $TG\oplus TG$ is given by $((X_1, Y_1), (X_2, Y_2))=\langle X_1, Y_2\rangle+\langle Y_1, X_2\rangle$).
\[L=\{(X^R-X^L, \frac{1}{4}(X^R+X^L))\in TG\oplus TG| X\in \mathfrak{g}\}\]
Here $X^L$ (resp. $X^R$) is the left-(resp. right-)invariant vector field generated by $X\in\mathfrak{g}$. Let $G_0=\{g\in G| \text{Ad}_g+1\text{ is invertible}\}$. Noting that $\{(X^R+X^L)_g| X\in\mathfrak{g}\}=T_gG$ if and only if $g\in G_0$, we can define the almost Poisson structure
\begin{align*}
	\Pi^\sharp: TG_0&\to TG_0\\
	v&\mapsto 4L_g\circ(1-\text{Ad}_g)\circ(1+\text{Ad}_g)^{-1}\circ L_{g^{-1}}v
\end{align*}
whose graph is exactly $L$ restricted to $G_0$. In fact, $\Pi$ is an $\eta$-twisted Poisson structure. The vector field $X^R-X^L$ being the fundamental vector field of the conjugation action, the almost symplectic leaves are precisely the conjugacy classes of $G$ in $G_0$. For example, if $G=SU(2)$, $G\setminus G_0$ is the conjugacy class containing $\begin{pmatrix}i &0\\ 0&-i\end{pmatrix}$. If we identify $G$ with $S^3$, then $G\setminus G_0$ can be identified with the equatorial 2-sphere, and $G_0$ consists of two connected components of hemispheres of $S^3$. The almost symplectic leaves are regular conjugacy classes (except $G\setminus G_0$), which are all diffeomorphic to $S^2$, and the two one-element conjugacy classes $\{I_2\}$ and $\{-I_2\}$.
\begin{proposition}(cf. \cite[Equation (40)]{ABM}, \cite[Proposition 3.1]{AMM} and \cite{GHJW})
	The conjugacy classes of $G$ in $G_0$ together with the following data
	\begin{enumerate}
		\item the $G$-action by conjugation, 
		\item the almost symplectic forms induced by the $\eta$-twisted Poisson structure $\Pi^\sharp$, and
		\item the inclusion map $\iota: \mathcal{C}\to G$ as the $G$-valued moment map
	\end{enumerate}
	are quasi-Hamiltonian manifolds as in \cite{AMM}. The almost symplectic form of a conjugacy class $\mathcal{C}$ in $G_0$ is given by 
	\[(\omega_\mathcal{C})_g(X_g^\sharp, Y_g^\sharp)=\frac{1}{2}(\langle\text{Ad}_g X, Y\rangle-\langle X, \text{Ad}_g Y \rangle)\]
	where $X^\sharp$ is the fundamental vector field induced by the infinitesimal conjugation action by $X\in\mathfrak{g}$.  
\end{proposition}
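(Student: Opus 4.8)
The plan is to recognize the three listed structures as precisely the data of a quasi-Hamiltonian $G$-space in the sense of \cite{AMM}, and then to verify the defining axioms, the formula for $\omega_\mathcal{C}$ being the computational heart of the matter. For \cite{AMM} one must exhibit, on each conjugacy class $\mathcal{C}\subset G_0$, a $G$-invariant $2$-form $\omega_\mathcal{C}$ together with a $G$-equivariant moment map to $G$ satisfying (i) $d\omega_\mathcal{C}=\iota^*\eta$, (ii) the moment map condition $\iota_{X^\sharp}\omega_\mathcal{C}=\tfrac12\,\iota^*\langle\theta^L+\theta^R,X\rangle$ for all $X\in\mathfrak{g}$ (here $\theta^R$ is the right-invariant Maurer-Cartan form), and (iii) the minimal degeneracy condition $\ker(\omega_\mathcal{C})_g=\{X^\sharp_g\mid \text{Ad}_g X=-X\}$. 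The moment map is taken to be the inclusion $\iota$, whose equivariance with respect to conjugation is immediate since conjugacy classes are conjugation-invariant, and $\omega_\mathcal{C}$ is the almost symplectic leaf form supplied by the $\eta$-twisted Poisson structure $\Pi$; hence (i) is nothing but the leaf identity $d\omega_\mathcal{C}=\eta|_\mathcal{C}$ recorded in Remark \ref{twistedpoissonrmk}, with $\eta=\langle\theta^L,[\theta^L,\theta^L]\rangle$ playing the role of (a constant multiple of) the Cartan $3$-form.

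First I would compute $\omega_\mathcal{C}$ explicitly. The tangent space $T_g\mathcal{C}$ is spanned by the fundamental vector fields $X^\sharp_g=X^R_g-X^L_g$, and these are precisely the anchor images of the Cartan-Dirac structure $L$: one has $(X^\sharp,\tfrac14(X^R+X^L))\in L$, equivalently $\Pi^\sharp(\tfrac14(X^R+X^L))=X^\sharp$, which is checked directly from the stated formula for $\Pi^\sharp$ using that $\text{Ad}_g$ commutes with $(1\pm\text{Ad}_g)^{\pm1}$. The leaf form of a Dirac structure is $\omega_\mathcal{C}(v_1,v_2)=\langle\alpha_1,v_2\rangle$ for $(v_i,\alpha_i)\in L$; substituting $v_i=X_i^\sharp$ and $\alpha_i=\tfrac14(X_i^R+X_i^L)$ and expanding $\langle X^R+X^L,Y^R-Y^L\rangle$ term by term, the bi-invariance of $\langle\cdot,\cdot\rangle$ and the orthogonality of $\text{Ad}_g$ collapse the four summands to $\langle\text{Ad}_g X,Y\rangle-\langle X,\text{Ad}_g Y\rangle$, giving the claimed formula up to the overall normalization constant. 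Alternatively, and more robustly, one can invert $\Pi^\sharp$ directly on $(\mathfrak{g}^g)^\perp$, the orthocomplement of the centralizer subalgebra, where $1-\text{Ad}_g$ is invertible, arriving at the same answer.

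With the formula in hand, axiom (ii) is a short verification: evaluating $\iota_{X^\sharp}\omega_\mathcal{C}$ on $Y^\sharp$ gives $\tfrac12(\langle\text{Ad}_g X,Y\rangle-\langle X,\text{Ad}_g Y\rangle)$, while $(\theta^L+\theta^R)(Y^\sharp)=(\text{Ad}_{g^{-1}}-\text{Ad}_g)Y$, and bi-invariance identifies the two. Axiom (iii) is automatic on $G_0$: the condition $\text{Ad}_g X=-X$ forces $(\text{Ad}_g+1)X=0$, hence $X=0$ since $\text{Ad}_g+1$ is invertible there; thus $\omega_\mathcal{C}$ is in fact non-degenerate, consistent with $\mathcal{C}$ being an almost symplectic (and not merely presymplectic) leaf.

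I expect the main obstacle to be book-keeping rather than conceptual: reconciling the normalization constants that float among the coefficient $\tfrac14$ in the Cartan-Dirac structure $L$, the unnormalized twisting form $\eta=\langle\theta^L,[\theta^L,\theta^L]\rangle$, and the conventions of \cite{AMM} for the Cartan $3$-form and for the moment map condition (ii). These must be pinned down so that the induced leaf form agrees on the nose with the standard quasi-Hamiltonian $2$-form $\tfrac12\langle(\text{Ad}_g-\text{Ad}_{g^{-1}})X,Y\rangle$ of \cite{ABM} and \cite{GHJW}; once a single consistent convention is fixed, the verification of all three axioms is routine.
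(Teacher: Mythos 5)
Your proposal is correct in substance, but it is worth noting that the paper offers no argument at all for this Proposition: the statement is justified purely by citation to [ABM, Eq.~(40)], [AMM, Prop.~3.1] and [GHJW], and the proof environment is empty. What you supply is therefore a genuinely different (indeed, the only) route: a self-contained derivation of $\omega_{\mathcal C}$ from the Cartan--Dirac structure $L$, followed by a direct check of the three quasi-Hamiltonian axioms. Your key computation is sound --- one verifies exactly as you say that $\Pi^\sharp\bigl(\tfrac14(X^R+X^L)\bigr)=X^R-X^L=X^\sharp$ on $G_0$, using $L_{g^{-1}*}X^R_g=\text{Ad}_{g^{-1}}X$ and the fact that $\text{Ad}_g$ commutes with $(1\pm\text{Ad}_g)^{\pm1}$ --- and your verifications of the moment map condition via $(\theta^L+\theta^R)(Y^\sharp)=(\text{Ad}_{g^{-1}}-\text{Ad}_g)Y$ and of nondegeneracy via invertibility of $\text{Ad}_g+1$ on $G_0$ are both correct. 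The one point you flag but do not resolve is real: with the paper's stated conventions (coefficient $\tfrac14$ in $L$, metric identification $T^*G\cong TG$, leaf form $\omega(v_1,v_2)=\langle\alpha_1,v_2\rangle$), the four-term expansion yields $\tfrac14\bigl(\langle\text{Ad}_gX,Y\rangle-\langle X,\text{Ad}_gY\rangle\bigr)$, which is half the formula asserted in the Proposition; reconciling this requires either the symmetrized pairing $\tfrac12(\alpha_1(v_2)+\alpha_2(v_1))$ of Severa--Weinstein in place of the paper's unsymmetrized one, or a corresponding adjustment of the coefficient in $L$. Since the Proposition's formula is the standard one from [AMM], the discrepancy lies in the paper's normalization of the Cartan--Dirac structure rather than in your argument, but a complete write-up would have to fix one consistent convention and carry it through all three axioms, as you anticipate. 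What your approach buys is an actual proof tied to the twisted Poisson structure $\Pi^\sharp$ used throughout the paper; what the paper's approach buys is brevity and delegation of the convention bookkeeping to the cited sources.
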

Let $G_0^{\text{reg}}$ be the set of regular elements in $G_0$. We fix a maximal torus $T$ of $G$ and a Weyl alcove $\Delta$, which parametrizes the set of conjugacy classes in $G$. Let $\mu: G\to \Delta$ be the point-orbit projection map, and $X\in \Delta$ such that $\text{exp}(X)\in G\setminus G_0$. Then $-1$ must be one of the eigenvalues of $\text{Ad}_{\text{exp}(X)}$, which are $e^{\alpha(X)}$ for $\alpha$ in the set of roots $R$. It follows that the image of $G\setminus G_0$ under $\mu$ is the intersection of $\Delta$ with the union of finitely many hyperplanes $\bigcup_{\alpha\in R}\alpha^{-1}(\pi i)$ in $\mathfrak{t}$. Hence $\text{Im}(\mu|_{G_0^{\text{reg}}})=\text{int}(\Delta)\setminus \bigcup_{\alpha\in R}\alpha^{-1}(\pi i)$, which is a disjoint union of finitely many open polytopes. For definiteness we choose the open polytope $Q$ with one vertex being 0, and let $B=\mu^{-1}(Q)$, which is the twisted Poisson manifold we consider in the example. $B\to Q$ is a fiber bundle whose fibers are all regular conjugacy classes, which are diffeomorphic to $G/T$. As $Q$ is contractible, this bundle is trivial. 

\begin{definition}
 Let $\kappa$ be the diffeomorphism $G/T\times Q\to B$ defined by $(gT, X)\mapsto g\text{exp}(X)g^{-1}$. 
\end{definition}

Using $\kappa$ to identify $B$ and $G/T\times Q$, we have that $\nu^*\mathcal{F}$ is isomorphic to the trivial bundle $\pi_2^*T^*Q=G/T\times T^*Q$. Furthermore, 
\begin{lemma}
	$B$ is a tropical affine manifold. Moreover, any basis of $\mathfrak{t}^*$ gives rise to a tropical affine structure of $B$. 
\end{lemma}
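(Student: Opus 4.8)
The plan is to produce the tropical affine structure from a single global foliated chart, exploiting the trivialization $\kappa\colon G/T\times Q\to B$ established above. Recall from the discussion preceding the Lemma that the almost symplectic leaves $\mathcal{F}$ of $B$ are exactly the regular conjugacy classes contained in $B$, and that under $\kappa$ these are precisely the slices $G/T\times\{X\}$, $X\in Q$. Consequently the composite $p:=\pi_2\circ\kappa^{-1}\colon B\to Q$, which coincides with the restriction $\mu|_B$ of the point-orbit projection (since $\mu\circ\kappa=\pi_2$), is a global submersion onto the open polytope $Q\subset\mathfrak{t}$ whose fibres are exactly the leaves of $\mathcal{F}$.

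First I would fix a basis $\{e_1,\dots,e_n\}$ of $\mathfrak{t}^*$ and regard each $e_i$ as a linear coordinate function on $\mathfrak{t}\supset Q$. Setting $a_i:=e_i\circ p\in C^\infty(B)$, I obtain $n$ smooth functions on $B$ that are constant along each leaf (because $p$ is constant on leaves, so the $a_i$ are the local Casimirs/action coordinates), and whose differentials $da_1,\dots,da_n$ are everywhere linearly independent, since $p$ is a submersion and $X\mapsto(e_1(X),\dots,e_n(X))$ is a linear isomorphism $\mathfrak{t}\to\mathbb{R}^n$. Thus $p=(a_1,\dots,a_n)\colon B\to\mathbb{R}^n$ is a submersion that globally defines the leaf space, so $(B,p)$ is a bona fide foliated chart in the sense of Definition \ref{tropical}. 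One may also record the corresponding period bundle as the lattice subbundle generated by $da_1,\dots,da_n$ inside $\nu^*\mathcal{F}\cong G/T\times T^*Q$, in agreement with Proposition \ref{tropaffstr}.

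The key point is that $(B,p)$ already covers all of $B$, so the atlas $\mathcal{A}=\{(B,p)\}$ consists of a single chart with no nontrivial overlaps: the only transition is $p\mapsto p$, given by $A=I_n\in GL(n,\mathbb{Z})$ and $z=0\in\mathbb{R}^n$, which lies in $\textbf{Trop}(n)$ trivially. Hence $\mathcal{A}$ is a tropical affine structure transversal to $\mathcal{F}$, and since the basis of $\mathfrak{t}^*$ was arbitrary, every such basis yields one. The substantive content—the only part that genuinely uses the geometry rather than formal manipulation—is the availability of a single global chart, which rests on the facts that $Q$ is contractible and that $B\to Q$ is a trivial fibre bundle, so there is no monodromy and global action coordinates exist. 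I expect the only steps requiring care to be the verification that the leaves coincide with the fibres of $\mu|_B$ and that $\mu|_B$ is a genuine submersion (using that $Q\subset\text{int}(\Delta)$ consists of regular values); notably, the $GL(n,\mathbb{Z})$ integrality condition is never tested, precisely because one chart suffices, which is also why the choice of basis is immaterial.
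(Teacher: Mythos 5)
Your proposal is correct and rests on the same geometric input as the paper's proof, namely that the compositions $\xi_i\circ\mu$ (your $a_i=e_i\circ p$) are globally defined functions on $B$, constant on the leaves, with everywhere linearly independent differentials. The only difference is packaging: the paper invokes Proposition \ref{tropaffstr} and exhibits the period bundle as the $\mathbb{Z}^n$-subbundle generated by $d(\xi_1\circ\mu),\dots,d(\xi_n\circ\mu)$, whereas you verify Definition \ref{tropical} directly via a single global foliated chart with vacuous transition condition --- both are valid and essentially equivalent.
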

\begin{proof}
	By Proposition \ref{tropaffstr}, a tropical affine structure is given by a $\mathbb{Z}^n$-subbundle of $\nu^*\mathcal{F}$ whose sections are closed 1-forms. Given a basis $\{\xi_1, \cdots, \xi_n\}$ of $\mathfrak{t}^*$, the $\mathbb{Z}^n$-subbundle generated by the sections $d(\xi_1\circ\mu), \cdots, d(\xi_n\circ\mu)$ furnishes the desired tropical affine structure. 
\end{proof}

Consider the connection of $B$ (as a $G/T$-fiber bundle over $Q$) which is the push forward of the canonical connection of $G/T\times Q$ through $\kappa$. For convenience we also call this connection of $B$ the canonical connection. 
\begin{definition}
	Let $\Theta_B$ be the unique characteristic 2-form of $B$ such that 
	\[(\Theta_B)_g(V, W)=(\omega_\mathcal{C})_g(V', W')\]
	where $V'$ and $W'$ are projections onto $T_g\mathcal{C}$ of $V$ and $W$ along the tangent space of the horizontal section through $g$.
\end{definition}

\subsection{An ASCIR over $B$}
To make matters simpler we assume further from now on the following. We let 
\begin{enumerate}
	\item $G$ be simply-connected, 
	\item $\Lambda$ be the integral lattice of $\mathfrak{t}$ and $\Lambda^*=\text{Hom}(\Lambda, \mathbb{Z})$ be the weight lattice,  
	\item $P$ be the tropical affine structure which is induced by a basis $\{\xi_1, \cdots, \xi_n\}$ of $\Lambda^*$ and $\{X_1, \cdots, X_n\}\subset \Lambda$ be the dual basis of $\{\xi_1, \cdots, \xi_n\}$, and
	\item $\langle\cdot, \cdot\rangle$ be the bi-invariant inner product of $\mathfrak{g}$ such that $\{X_1, \cdots, X_n\}$ is orthonormal.
\end{enumerate}
\begin{definition}
	Let $G\times Q\times_T(U(1))_\xi^Y$ be the circle bundle over $G/T\times Q$ with $T$ acting on $U(1)$ with weight $\xi$ and $G/T\times T^*Q$ acting on $G\times Q\times_T(U(1))_\xi^Y$ by 
	\[(gT, X, \zeta)\cdot [(g, X, z)]=[(g, X, \text{exp}(2\pi i\zeta(Y))z)]\]
	Let $M_\xi^Y$ be $(\kappa^{-1})^*(G\times Q\times_T (U(1))_\xi^Y)$, with the $\nu^*\mathcal{F}$-action induced by the $G/T\times T^*Q$-action on $G\times Q\times_T (U(1))_\xi^Y$, and $M_\xi$ the same circle bundle forgetting the $\nu^*\mathcal{F}$-action. 
\end{definition}

By \cite[Lemma 3.1, (3.2)]{Dua} and the fact that $H^i(G)=0$ for any simply-connected compact Lie group $G$ and $i=1, 2$, the transgression for the fibration $T\hookrightarrow G\to G/T$ maps $H^1(T, \mathbb{Z})$ isomorphically onto $H^2(G/T, \mathbb{Z})$. If $H^1(T, \mathbb{Z})$ is identified with $\Lambda^*$, then the transgression maps $\xi$ to $c_1(G\times_T\mathbb{C}_\xi)$. It follows that $\Lambda^*$ can be identified with $H^2(B, \mathbb{Z})$ through the map $\xi\mapsto c_1(M_\xi)$. The `N\'eron-Severi group' $H^2(B, \mathcal{P})$ can then be identified with 
\[\text{Hom}_\mathbb{Z}(\Lambda, H^2(B, \mathbb{Z}))\cong \text{Hom}_\mathbb{Z}(\Lambda, \Lambda^*)\]
Now it is clear that a $\nu^*\mathcal{F}/P$-torsor over $B$ has the Chern class represented by $f\in\text{Hom}_\mathbb{Z}(\Lambda, \Lambda^*)$ if and only if it is isomorphic to 
\[M_{f(X_1)}^{X_1}\times_B M^{X_2}_{f(X_2)}\times_B\cdots\times_B M^{X_n}_{f(X_n)}\]
\begin{proposition}\label{deltatrivtrop}
	\[\partial^2 c(M_{f(X_1)}^{X_1}\times_B M^{X_2}_{f(X_2)}\times_B\cdots\times_B M^{X_n}_{f(X_n)})=\sum_{i=1}^n d(c_1(M_{f(X_i)})\cdot(\xi_i\circ\mu))\]
\end{proposition}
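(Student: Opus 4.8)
The plan is to use that $\partial^2$ is additive and that the period sheaf splits, reducing the whole statement to a single model computation which I then sum over the $n$ circle factors. Write $M:=M_{f(X_1)}^{X_1}\times_B\cdots\times_B M_{f(X_n)}^{X_n}$. Under the identifications made above the period sheaf is the trivial local system $\mathcal{P}\cong\underline{\Lambda^*}=\bigoplus_{i=1}^{n}\underline{\mathbb{Z}}\,s_i$, where $s_i:=d(\xi_i\circ\mu)$ is a global generating section; hence $H^2(B,\mathcal{P})=\bigoplus_i H^2(B,\underline{\mathbb{Z}}\,s_i)$, and since both the coefficient map $\mathcal{P}\hookrightarrow\mathcal{Z}^1_\mathcal{F}$ and the isomorphism $H^2(B,\mathcal{Z}^1_\mathcal{F})\cong H^3(B,\mathcal{F})$ of Theorem \ref{classificationzerotwist} are additive, $\partial^2$ acts summand by summand. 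Thus it suffices to prove the model identity
\[
\partial^2\bigl(\beta\otimes[s]\bigr)=\bigl[\,s\wedge\beta\,\bigr]\in H^3(B,\mathcal{F})
\]
for a global closed generator $s\in\mathcal{P}(B)$ and a class $\beta\in H^2(B,\mathbb{Z})$ represented by a closed $2$-form, noting $s\wedge\beta\in\Omega^3_\mathcal{F}(B)$ because $s\in\Omega^1_\mathcal{F}$.

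The second step is to identify the Chern class explicitly. The bundle $T=\nu^*\mathcal{F}/P$ has fibre $\mathfrak{t}^*/\Lambda^*$, and the factor $M_{f(X_i)}^{X_i}$ is the quotient of $M$ along the homomorphism $\mathfrak{t}^*\to\mathbb{R}/\mathbb{Z},\ \zeta\mapsto\zeta(X_i)$, i.e. along evaluation $X_i\colon\Lambda^*\to\mathbb{Z}$, $\xi_j\mapsto\delta_{ij}$. Functoriality of the Chern class under these quotients, together with the transgression identification $c_1(M_{f(X_i)})\leftrightarrow f(X_i)$ recalled above, gives $c(M)=\sum_{i=1}^{n}c_1(M_{f(X_i)})\otimes\xi_i$ in $H^2(B,\mathbb{Z})\otimes\Lambda^*$, where the $\xi_i$-summand is the one generated by $s_i$. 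This step is bookkeeping and should present no difficulty.

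The heart of the argument, and the step I expect to be the main obstacle, is the model identity, which is an explicit computation of $\partial^2$ on a de Rham representative through the \v{C}ech--de Rham double complex $C^p(\mathcal{U},\Omega^q_\mathcal{F})$ for a good cover $\mathcal{U}=\{U_\alpha\}$. Choosing primitives $\beta=d\beta_\alpha$, $\beta_\beta-\beta_\alpha=df_{\alpha\beta}$, the integral cocycle $n_{\alpha\beta\gamma}=f_{\beta\gamma}-f_{\alpha\gamma}+f_{\alpha\beta}$ represents $\beta$; the coefficient map sends $\beta\otimes[s]$ to the $\mathcal{Z}^1_\mathcal{F}$-valued cocycle $\{n_{\alpha\beta\gamma}\,s\}$, and the isomorphism onto $H^3(B,\mathcal{F})$ is realised by the usual zig-zag (rows acyclic by fineness of $\Omega^q_\mathcal{F}$, columns governed by the short exact sequences $0\to\mathcal{Z}^q_\mathcal{F}\to\Omega^q_\mathcal{F}\to\mathcal{Z}^{q+1}_\mathcal{F}\to0$ and the foliated Poincar\'e lemma). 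Two partition-of-unity contraction steps produce local data $g_{\alpha\beta}$ with $\delta g=n$ and $h_\alpha=\sum_\nu\rho_\nu\,dg_{\nu\alpha}$, and yield the global closed foliated $3$-form $\Phi:=dh_\alpha\wedge s\in\mathcal{Z}^3_\mathcal{F}(B)$ representing $\partial^2(\beta\otimes[s])$.

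It then remains to match $\Phi$ with $s\wedge\beta$. On $U_\alpha$ one has $\Phi-\beta\wedge s=d(h_\alpha-\beta_\alpha)\wedge s$, and the local forms $h_\alpha-\beta_\alpha$ fail to agree on overlaps only by $d(g_{\alpha\beta}-f_{\alpha\beta})$; since $\delta(g-f)=0$ the smooth \v{C}ech $1$-cocycle $g_{\alpha\beta}-f_{\alpha\beta}$ is a coboundary $k_\beta-k_\alpha$ by fineness of $C^\infty$. Consequently $(h_\alpha-\beta_\alpha-dk_\alpha)\wedge s$ glues to a global form $\Psi\in\Omega^2_\mathcal{F}(B)$ with $d\Psi=\Phi-\beta\wedge s$, whence $[\Phi]=[\beta\wedge s]$ in $H^3(B,\mathcal{F})=\mathcal{Z}^3_\mathcal{F}(B)/d\Omega^2_\mathcal{F}(B)$; this is the delicate point, since it is exactly the correction by $k_\alpha$ that makes the difference foliated-exact. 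Summing over $i$ and using $s_i\wedge c_1(M_{f(X_i)})=d\bigl((\xi_i\circ\mu)\,c_1(M_{f(X_i)})\bigr)$ (as $dc_1=0$) yields $\partial^2 c(M)=\sum_{i=1}^{n}d\bigl(c_1(M_{f(X_i)})\cdot(\xi_i\circ\mu)\bigr)$, as claimed. Independence of the chosen closed representative of $c_1$ is automatic, since replacing it by $c_1+d\lambda$ alters the $i$-th summand by $d(s_i\wedge\lambda)\in d\Omega^2_\mathcal{F}(B)$.
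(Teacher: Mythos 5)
Your proposal is correct and follows essentially the same route as the paper: both exploit the triviality of $P$ to split $H^2(B,\mathcal{P})\cong H^2(B,\mathbb{Z})\otimes H^0(B,\mathcal{P})$ and then identify $\partial^2$ on each summand with wedging a de Rham representative against the flat generating section $d(\xi_i\circ\mu)$, giving $d\bigl(c_1(M_{f(X_i)})\cdot(\xi_i\circ\mu)\bigr)$. The only difference is one of detail: the paper asserts this identification of the Dazord--Delzant homomorphism without proof, whereas you verify it by an explicit (and correct) \v{C}ech--de Rham zig-zag.
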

\begin{proof}
	$P$ being a trivial $\mathbb{Z}^n$-bundle, we have a canonical isomorphism $H^2(B, \mathcal{P})\cong H^2(B, \mathbb{Z})\otimes H^0(B, \mathcal{P})$, where $H^0(B, \mathcal{P})$ is spanned by the basis $\{d(\xi_i\circ\mu)\}_{i=1}^n$. The Dazord-Delzant homomorphism then amounts to the composition
	\begin{align*}
		H^2(B, \mathbb{Z})\otimes H^0(B, \mathcal{P})&\longrightarrow H^3(B, \mathcal{F})\\
		\alpha\otimes d(\xi_i\circ\mu)&\mapsto\alpha\cdot d(\xi_i\circ\mu)=d(\alpha\cdot(\xi_i\circ\mu))
	\end{align*}
	Using this interpretation the Proposition follows easily.
\end{proof}
\begin{definition}
	Let $M$ be the $\nu^*\mathcal{F}/P$-torsor $G\times Q$ over $B$ where the projection map is $(g, X)\mapsto g\text{exp}(X)g^{-1}$ and the torsor structure is induced by the $G/T\times T^*Q$-action given by 
	\[(gT, X, \zeta)\cdot(g, X)=(g\text{exp}(-\langle\zeta, \cdot\rangle), X)\]
\end{definition}
It is not hard to see that $M$ is isormphic to the torsor $M^{X_1}_{\xi_1}\times_B M^{X_2}_{\xi_2}\times_B\cdots\times_B M^{X_n}_{\xi_n}$ and hence its Chern class is represented by the map $(X\mapsto \langle X, \cdot\rangle)\in \text{Hom}_\mathbb{Z}(\Lambda, \Lambda^*)$.
\begin{proposition}
	There exists an almost symplectic form on $M$ such that it is an ASCIR over $B$ with the twisting form $\eta$. 
\end{proposition}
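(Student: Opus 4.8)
The plan is to realize $\omega$ through Theorem~\ref{SSsuffcond}, so that the whole proposition collapses to the verification of a single identity in foliated cohomology,
\[
\partial^2 c(M)=[d\Theta_B-\eta]\in H^3(B,\mathcal{F}).
\]
Since $(B,\Pi)$ is $\eta$-twisted by construction and $M$ is a $\nu^*\mathcal{F}/P$-torsor, once this equality is established Theorem~\ref{SSsuffcond} furnishes an almost symplectic form whose twisting form is exactly $\eta$, which is what we want. Recall that $c(M)$ is represented by $f\colon X\mapsto\langle X,\cdot\rangle$, and that orthonormality of $\{X_i\}$ together with duality with $\{\xi_i\}$ forces $f(X_i)=\xi_i$, in agreement with the identification $M\cong M^{X_1}_{\xi_1}\times_B\cdots\times_B M^{X_n}_{\xi_n}$ noted above.

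First I would compute the left-hand side. Proposition~\ref{deltatrivtrop}, applied with $f(X_i)=\xi_i$, gives
\[
\partial^2 c(M)=\sum_{i=1}^n\big[d\big((\xi_i\circ\mu)\,c_1(M_{\xi_i})\big)\big]=\sum_{i=1}^n\big[d(\xi_i\circ\mu)\wedge F_i\big],
\]
where $F_i$ is a closed $2$-form representing $c_1(M_{\xi_i})$. The natural choice is the Chern--Weil representative: endow the principal $T$-bundle $G\to G/T$ with its canonical connection (connection form the $\mathfrak{t}$-component of $\theta^L$) and set $F_i=\xi_i(\Omega)$, where $\Omega$ is the curvature. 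Because the transgression sends $\xi_i$ to $c_1(G\times_T\mathbb{C}_{\xi_i})$, this $F_i$ represents $c_1(M_{\xi_i})$ under the identification $\kappa$ of $B$ with $G/T\times Q$, and it is a form supported along the leaves.

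The heart of the proof, and the step I expect to be the main obstacle, is to match the right-hand side $[d\Theta_B-\eta]$ with the same sum in $H^3(B,\mathcal{F})\cong Z_\mathcal{F}^3(B)/d\Omega_\mathcal{F}^2(B)$. Using $\kappa$ to write $B=G/T\times Q$, the canonical connection makes each leaf $G/T\times\{X\}$ vertical and $Q$ transverse, and by definition $\Theta_B$ coincides fiberwise with the quasi-Hamiltonian form $\omega_{\mathcal{C}_X}$ while annihilating the $Q$-directions; hence in these coordinates $\Theta_B=\omega_{\mathcal{C}_X}$ with no $dX$-legs. Decomposing $\eta$ and $d\Theta_B$ by their $dX$-degree, the purely leafwise parts cancel because $d_{G/T}\omega_{\mathcal{C}_X}$ is the restriction of $\eta$ to the leaf (the quasi-Hamiltonian condition $d\omega_{\mathcal{C}}=\iota^*\eta$ recorded in Remark~\ref{twistedpoissonrmk}), which reconfirms $d\Theta_B-\eta\in Z_\mathcal{F}^3$. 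What remains is to show that
\[
d\Theta_B-\eta=\sum_{i=1}^n d(\xi_i\circ\mu)\wedge\partial_{X_i}\omega_{\mathcal{C}_X}-\big(\eta^{(1)}+\eta^{(2)}\big)
\]
agrees, modulo $d\Omega_\mathcal{F}^2$, with $\sum_i d(\xi_i\circ\mu)\wedge F_i$; here $\eta^{(k)}$ denotes the part of $\eta$ with $k$ transverse legs, and the purely transverse part vanishes since $\eta$ restricts to $0$ on the torus directions. The crux is a Duistermaat--Heckman-type statement for the family $\{\omega_{\mathcal{C}_X}\}$: its transverse derivative $\partial_{X_i}\omega_{\mathcal{C}_X}$ is leafwise cohomologous to the curvature $F_i$, and the higher $dX$-degree pieces of $\eta$ are foliated-exact. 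I would carry this out directly from the explicit formula $(\omega_{\mathcal{C}})_g(X^\sharp,Y^\sharp)=\tfrac{1}{2}(\langle\mathrm{Ad}_g X,Y\rangle-\langle X,\mathrm{Ad}_g Y\rangle)$, the structure of $\eta=\langle\theta^L,[\theta^L,\theta^L]\rangle$, and the curvature $\Omega$, differentiating $\omega_{\mathcal{C}_X}$ in the alcove directions and absorbing the remaining $dX$-terms into $d\Omega_\mathcal{F}^2$.

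Once both classes are matched, Theorem~\ref{SSsuffcond} applies and produces the desired almost symplectic form with twisting form $\eta$, completing the proof. It is worth observing that this scheme is consistent even though $H^3(B)=H^3(G/T)=0$ forces $\eta$ itself to be exact on $B$: a global primitive of $\eta$ need not restrict to a primitive of $\omega_{\mathcal{C}}$ on each leaf, and the foliated class $[d\Theta_B-\eta]$, rather than the ordinary de Rham class, is precisely what records this leafwise discrepancy measured by the curvatures $F_i$.
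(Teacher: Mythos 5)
Your first half coincides with the paper's proof: both reduce the proposition, via Theorem \ref{SSsuffcond}, to the single identity $\partial^2 c(M)=[d\Theta_B-\eta]$ in $H^3(B,\mathcal{F})$, and both compute the left-hand side from Proposition \ref{deltatrivtrop} using $f(X_i)=\xi_i$. Where you diverge is in how the two classes are matched, and that is exactly where your argument has a genuine gap. You decompose $d\Theta_B-\eta$ by transverse ($dX$-)degree on $G/T\times Q$ and then rest the whole proof on a ``Duistermaat--Heckman-type statement'': that $\partial_{X_i}\omega_{\mathcal{C}_X}$ is leafwise cohomologous to a Chern--Weil curvature $F_i$, with the mixed-degree pieces $\eta^{(1)},\eta^{(2)}$ absorbed into $d\Omega^2_\mathcal{F}$. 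You correctly flag this as the crux, but you never establish it, and it is not true in the naive form stated: the class $[\omega_{\mathcal{C}_X}]$ on the leaf is \emph{not} simply $\sum_i \xi_i(X)\,c_1(M_{\xi_i})$ (that linearity holds for the Kostant--Kirillov--Souriau forms on coadjoint orbits, not for the quasi-Hamiltonian forms on conjugacy classes), so $\partial_{X_i}\omega_{\mathcal{C}_X}$ differs from $F_i$ by a correction that must cancel against the $dX\wedge(\text{leaf})^2$ component of $\eta$. Verifying that cancellation directly from $\eta=\langle\theta^L,[\theta^L,\theta^L]\rangle$ and the explicit formula for $\omega_{\mathcal{C}}$ is a nontrivial Lie-theoretic computation, and deferring it with ``I would carry this out directly'' leaves the proof incomplete.

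The paper's proof supplies precisely the missing device: the AMM $2$-form $\varpi\in\Omega^2(\mathfrak{g}^*)$ with $\exp^*\eta=d\varpi$ and $\omega_{\mathcal{O}}=\exp^*\omega_{\mathcal{C}}-i^*_{\mathcal{O}}\varpi$. Pulling everything back to the open set $D\subset\mathfrak{g}^*$ converts $d\Theta_B-\eta$ into $d(\exp^*\Theta_B-\varpi)$ and converts the quasi-Hamiltonian forms into genuine KKS forms, so the identity reduces to $[\exp^*\Theta_B-\varpi]=\sum_i \exp^*c_1(M_{\xi_i})\cdot(\xi_i\circ\mu\circ\exp)$, which is checked by restricting to each coadjoint orbit (where it is the linearity $[\omega_{\mathcal{O}_\zeta}]=\sum_i\xi_i(\zeta)[\omega_{\mathcal{O}_{\xi_i}}]$) together with the K\"unneth decomposition of $H^2(G/T\times Q,\mathbb{R})$. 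If you want to salvage your direct approach, you should expect to reintroduce $\varpi$ (or some primitive of $\eta$ playing the same role) to control the $\eta^{(1)}$ and $\eta^{(2)}$ terms; at that point you have essentially reconstructed the paper's argument.
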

\begin{proof}
	It suffices to check that $\partial^2c(M)=[d\Theta_B-\eta]$ by Theorem \ref{SSsuffcond}. Let $D$ be the open subset in $\mathfrak{g}^*$ which is diffeomorphic to $B$ through the exponential map. By \cite{AMM}, there exists a 2-form $\varpi\in\Omega^2(\mathfrak{g}^*)$ satisfying $\text{exp}^*\eta=d\varpi$ and $\omega_\mathcal{O}=\text{exp}^*\omega_\mathcal{C}-i^*_\mathcal{O}\varpi$, where $\mathcal{O}$ is a coadjoint orbit, $\omega_\mathcal{O}$ its Kostant-Kirillov-Soriau 2-form and $\mathcal{C}=\text{exp}(\mathcal{O})$. We have that 
	\[\text{exp}^*(d\Theta_B-\eta)=d(\text{exp}^*\Theta_B-\varpi)\]
On the other hand, by Proposition \ref{deltatrivtrop}
	\[\text{exp}^*\partial^2c(M)=d\left(\sum_{i=1}^n \text{exp}^*c_1(M_{\xi_i})\cdot(\xi_i\circ\mu\circ\text{exp})\right)\]
	Now it boils down to showing that $[\text{exp}^*\Theta_B-\varpi]=\sum_{i=1}^n \text{exp}^*c_1(M_{\xi_i})\cdot(\xi_i\circ\mu\circ\text{exp})\in H^2(D, \mathbb{R})\cong H^2(G/T\times Q, \mathbb{R})$. Restricting to the coadjoint orbit $\mathcal{O}_\zeta$ containing $\zeta\in Q$, we have
	\begin{align*}
		i^*_{\mathcal{O}_\zeta}(\text{exp}^*\Theta_B-\varpi)&=\text{exp}^*i^*_\mathcal{C}\Theta_B-i^*_{\mathcal{O}_\zeta}\varpi\\
		                                                                               &=\text{exp}^*\omega_\mathcal{C}-i^*_{\mathcal{O}_\zeta}\varpi\\
		                                                                               &=\omega_{\mathcal{O}_\zeta}\\
		i^*_{\mathcal{O}_\zeta}\sum_{i=1}^n\text{exp}^* c_1(M_{\xi_i})\cdot(\xi_i\circ\mu\circ\text{exp})&=\sum_{i=1}^n[\omega_{\mathcal{O}_{\xi_i}}]\cdot (\xi_i(\zeta))\\
		                                                                                                                                                                        &=[\omega_{\mathcal{O}_\zeta}]
	\end{align*}
	So $i^*_{\mathcal{O}_\zeta}([\text{exp}^*\Theta_B-\varpi]-\sum_{i=1}^n\text{exp}^* c_1(M_{\xi_i})\cdot(\xi_i\circ\mu\circ\text{exp}))=0$. By applying K\"unneth formula to $H^2(G/T\times Q, \mathbb{R})$ and the acyclicity of $H^*(Q, \mathbb{R})$, we have that $[\text{exp}^*\Theta_B-\varpi]=\sum_{i=1}^n \text{exp}^*c_1(M_{\xi_i})\cdot(\xi_i\circ\mu\circ\text{exp})$, and the desired conclusion follows.
\end{proof}
\begin{remark}\label{twistedlieexrmk}
	The example of ASCIR discussed in this Section should be viewed as the `exponentiated version' of the example of SCIR over the union of regular coadjoint orbits in $\mathfrak{g}^*$ equipped with the Lie-Poisson structure in \cite{DD}. The difference between the Lie-Poisson structure of $D$ and the twisted Poisson structure of $B$ is accounted for by the introduction of the 2-form $\varpi$, whose exterior differential is the twisted 3-form $\eta$. This example is also inspired by the AMM action groupoid $G\rtimes G$, where $G$ acts on itself by conjugation and which integrates the Cartan-Dirac structure (cf. \cite[Section 7]{BCWZ} and \cite{BXZ}). 
\end{remark}
\section{Some general remarks}\label{remarks}
	All results in this paper fits into the framework of integration of twisted Poisson or, even more generally, twisted Dirac structures (cf. \cite{BCWZ}, \cite{CF1}, \cite{CF2} and \cite{CX}). Viewing $B$ more generally as a twisted Dirac manifold, we see that the ASCIRs over $B$ are nothing but special examples of \emph{presymplectic realizations} (cf. \cite[Definition 7.1]{BCWZ}). If we regard $B$ with $\Pi=0$ and an exact twisting 3-form $d\xi$ as a Lie algebroid, then $(T^*B, \omega_{\text{can}}+\pi_{T^*B}^*\xi)$ is an \emph{almost symplectic groupoid} with source and target maps being identical and fibers of the source map being simply-connected, which integrates $B$, in the sense of Definition 2.1 of \cite{CX}. If in addition $B$ possesses a tropical affine structure $P$, then $(T, \omega_T+\pi_T^*\xi)$ is a proper almost symplectic groupoid. 
	
	The notions of tensor product and Picard groups are inspired by the paper \cite{BW} where they are explored in the more general framework of \emph{symplectic dual pairs}. Using their terminology $\text{Pic}(B, \Pi, P, \xi)$ is the \emph{static} Picard group of the groupoid $(T, \omega_T+\pi^*_T\xi)$ if $\Pi=0$.

\footnotesize\textsc{Department of Mathematics, Cornell University, Ithaca, NY 14853, USA \\
\\
National Center for Theoretical Sciences, Mathematics Division, National Taiwan University, Taipei, Taiwan\\E-mail: }\texttt{ckfok@ntu.edu.tw}\\
\textsc{URL: }\texttt{http://www.math.cornell.edu/$\sim$ckfok}
\end{document}